\documentclass[pdflatex,sn-mathphys-num,twocolumn]{sn-jnl}
\usepackage{graphicx}%
\usepackage{multirow}%
\usepackage{amsmath,amssymb,amsfonts}%
\usepackage{amsthm}%
\usepackage{mathrsfs}%
\usepackage[title]{appendix}%
\usepackage{xcolor}%
\usepackage{textcomp}%
\usepackage{manyfoot}%
\usepackage{booktabs}%
\usepackage{algorithm}%
\usepackage{algorithmicx}%
\usepackage{algpseudocode}%

\theoremstyle{thmstyleone}%
\newtheorem{theorem}{Theorem}%  meant for continuous numbers
\newtheorem{proposition}[theorem]{Proposition}% 

\usepackage{comment}
\usepackage{bbm}
\usepackage{subcaption}

\theoremstyle{thmstyletwo}%
\newtheorem{remark}{Remark}%

\theoremstyle{thmstylethree}%
\newtheorem{definition}{Definition}%
\newtheorem{lemma}{Lemma}

\DeclareMathAlphabet\gothic{U}{euf}{m}{n}
\DeclareMathOperator{\SE}{SE}
\DeclareMathOperator{\SO}{SO}

\newcommand{\ul}{\mathbf}
\newcommand{\R}{\mathbb{R}}
\newcommand{\bp}{\mathbf{p}}
\newcommand{\bq}{\mathbf{q}}
\newcommand{\bx}{\mathbf{x}}
\newcommand{\by}{\mathbf{y}}
\newcommand{\bn}{\mathbf{n}}
\newcommand{\cF}{\mathcal{F}}

\DeclareRobustCommand{\indbb}[1]{\ensuremath{\mathbbm{1}_{#1}}}

\begin{document}
\title[Article Title]{Sub-Riemannian Snakes on the Projective Line Bundle 
%$\R^2 \times P^1$ 
with Applications to Segmentation of SEM Images}

%%=============================================================%%
%% GivenName	-> \fnm{Joergen W.}
%% Particle	-> \spfx{van der} -> surname prefix
%% FamilyName	-> \sur{Ploeg}
%% Suffix	-> \sfx{IV}
%% \author*[1,2]{\fnm{Joergen W.} \spfx{van der} \sur{Ploeg} 
%%  \sfx{IV}}\email{iauthor@gmail.com}
%%=============================================================%%

\author*[1,2]{\fnm{Leanne} \sur{Vis}}\email{l.vis@tue.nl}

\author[1,2]{\fnm{Maxim} \sur{Pisarenco}}\email{m.pisarenco@tue.nl}
%\equalcont{These authors contributed equally to this work.}

\author[1]{\fnm{Bart M. N.} \sur{Smets}}\email{b.m.n.smets@tue.nl}

\author[3]{\fnm{Fons} \spfx{van der} \sur{Sommen}}\email{fvdsommen@tue.nl}

\author[1]{\fnm{Remco} \sur{Duits}}\email{r.duits@tue.nl}

%\equalcont{These authors contributed equally to this work.}

\affil*[1]{\orgdiv{Department of Mathematics and Computer Science, CASA \& EAISI}, \orgname{Eindhoven University of Technology}, \orgaddress{\street{Groene Loper 3}, \city{Eindhoven}, \country{the Netherlands}}}

\affil[2]{\orgname{ASML Research}, \orgaddress{\street{De Run 6501}, \city{Veldhoven}, \country{the Netherlands}}}

\affil[3]{\orgdiv{Department of Electrical Engineering, ARIA \& EAISI}, \orgname{Eindhoven University of Technology}, \orgaddress{\street{Groene Loper 3}, \city{Eindhoven}, \country{the Netherlands}}}

%%==================================%%
%% Sample for unstructured abstract %%
%%==================================%%

\abstract{
    Geodesic tracking on the projective line bundle $\R^2 \times P^1 $ has many uses, including the segmentation of objects in images. 
    However, global tracking requires expensive distance map computations. 
    %by fast marching and/or solving the eikonal PDE
    We provide a practical solution to this problem by introducing a snake model on $\R^2 \times P^1$, where we only compute the distance map where needed. 
    Our method introduces a geometric criterion for switching between fast spatial snakes and computing minimizing geodesics of a new projective line bundle model.
    The new pseudo-distance underlying our geometric model is both symmetric and cusp-free, in contrast to previous geodesic sub-Riemannian models on $\R^2 \times P^1$. 
    Our pseudo-distance satisfies the triangle inequality on a large set that we characterize, and includes a connected-component-informed cost function, which is highly advantageous in applications.
    Experiments on Scanning Electron Microscopy (SEM) images demonstrate our method's robust, automatic segmentation of overlapping electronic structures.
    % We apply our tracking method to a variety of Scanning Electron Microscopy (SEM) images of electronic devices, where we demonstrate automatic robust and correct segmentation of overlapping structures.
}

\keywords{Snakes, Active contours, Projective line bundle, Sub-Riemannian geometry, Scanning Electron Microscopy (SEM) images, Segmentation of overlapping structures}

%%\pacs[JEL Classification]{D8, H51}

%%\pacs[MSC Classification]{35A01, 65L10, 65L12, 65L20, 65L70}

\maketitle

\section{Introduction}\label{sec1}
Segmentation of overlapping structures in two dimensional images (with correct tracing of edges) is a notorious problem in image processing \cite{BekkersSIAM,citti2006cortical,cohen2001multiple,Benmansour,chen2017global,Chen2024,duitsmeestersmirebeauportegies,FRANCESCHIELLO201955}.

% add PhD thesis Erik Bekkers, %and Da Chen here! 
%Also recent paper %https://link.springer.com/article%/10.1007/s11263-023-01881-z
% %
% and %https://www.sciencedirect.com/sci%ence/article/pii/S0926224518302195
This paper presents a novel distance map on the projective line bundle and uses it for geodesic tracking and segmentation of overlapping structures in images. 
%As we explain in details in the following, the 3 main contributions of this article are: 
The distance map we propose provides a new geodesic tracking model with several advantages, including: 

% 1. In contrast with previous work [], The geodesics defined by the distance maps do not have cusps, which are undesirable artefacts that do not occur in the edges of images.  

% 2. It integrates the connected component algorithm of \citet{berg2024connectedcomponentsliegroups} and includes a criterion to switch from a geodesic tracking to a quick snake optimization. 

% 3. It shows improved performance in automatic segmentation and grouping of overlapping structures in Scanning Electron Microscopy (SEM) images of electronic devices. 

\begin{itemize}
    \item 
    In contrast to previous work on tracking on the projective line bundle $\R^2 \times P^1$ by \citet{BekkersGSI}, 
    our geodesics do not exhibit cusps after projecting them on the image.
    Cusps are undesirable artifacts \cite{Boscain2,duitsmeestersmirebeauportegies} that do not occur in the edges found in most types of images. 
    Dealing with them on $\R^2\times P^1$ is challenging. 
    
    %There are also other significant advantages of our model which we %mathematically 
    %analyze in this article. %This is one of the differences and advantages with respect to the previous model as we analyze in our main theorem. 
    \item 
    It integrates the connected component algorithm of \citet{berg2024connectedcomponentsliegroups}, which enables it to better deal with overlapping structures.
    \item 
    It includes a criterion to switch between accurate \emph{geodesic tracking} and quick \emph{snake optimization} to keep the computational cost low. 
\end{itemize}
Together, this leads to an improved tracking and segmentation method, compared to the previous work of \cite{BekkersGSI}. We demonstrate this in our experiments, which show improved performance in automatic segmentation and grouping of overlapping structures in Scanning Electron Microscopy (SEM) images of electronic devices where overlapping structures are the norm. 

We first provide a literature review on \emph{snakes} and \emph{geodesic tracking} models in image analysis over the past decades. 
Then we highlight the relevance of curve optimization on the projective line bundle, and introduce the new model. 
At the end of this introduction we explain our main contributions and novelties of this article in detail. 

\subsection{Previous Work on Geodesic Tracking and Snakes}

%Active contour modeling (`snakes') and geodesics active contours have become popular in around 1990. 
In 1988, \citet{kass1988snakes} introduced active contour models. 
The idea is that a parameterized curve (called a `snake') is fitted to the local differential structure of an image, such as smooth edges and lines.
A snake is an energy-minimizing spline that minimizes three forces, namely (1) the internal forces of the spline generated by the resistance of deformation, (2) the image forces that pull the snake toward the local differential structures present in the image, and (3) the external constraint forces specified by the user. 

The energy minimizing procedure is an iterative process, where an initial contour (manually specified or generated automatically) is moved iteratively towards the local differential structure.
Active contour modeling requires the adjustment of the smoothness parameter, a parameter which balances the resistance to deformation of the spline and the image forces. %To reduce user input, active contour models may possibly be enriched with statistical template matching \cite{cootes1995active}. 
%It can, for example, be used to generate an initial contour. 

New geometric models (e.g., level sets associated with mean curvature flows \cite{chan1999active,caselles1993geometric,malladi1995shape}) were introduced based on geometric partial differential equations; cf.~Sapiro \cite{Sapirobook} and the references therein. 

\citet{caselles1997geodesic} introduced the \emph{geodesic} contour model. They showed how a curve evolution 
(in this case the geodesic curve flow instead of the mean curvature flow) 
can be derived from the classical snake model.
The geodesic contour model builds on formal Riemannian geometry \cite{Jost} where stationary curves of the Riemannian distance are called geodesics. If a geodesic is a global minimizer of the Riemannian distance connecting two given boundary points, it is called a \emph{minimizing} geodesic. 
The geodesic contour model \citet{caselles1997geodesic} constrains the snake to be a minimizing geodesic of a data-driven Riemannian metric. 

Geodesic contour or tracking modeling is a two-step procedure; first, a distance map is computed, which is followed by a gradient descent step for computation of \emph{minimizing} geodesics. 
%Recall, that geodesics are stationary curves of the Riemannian metric and need not be mini

Computations of the level sets of viscosity solutions \cite{Evansbook} of PDEs considered in \cite{caselles1997geodesic}, and of related eikonal PDEs in \cite{osher2001level} are generally expensive, despite their vast advantages in image analysis, as shown in \cite{caselles1997geodesic} and \cite{osher2001level}. 
Therefore, effective fast-marching algorithms to solve these PDEs were developed \cite{sethian2000fast,osher1988fronts} based on Tsiklis' method \cite{tsitsiklis1995efficient}, with generalizations to general manifolds \cite{kimmel1998computing}.
For details on how we compute the geodesics, see Section~\ref{ch:track}.

%In the last decade, 
Several works have advocated the use of such PDE-methods on the space of positions and orientations $\R^2 \times S^1$ instead of the position space $\R^2$
\cite{Benmansour,BekkersSIAM,chen2017visual,chen2017global,duitsmeestersmirebeauportegies}. In these methods, images are lifted to $\R^2 \times S^1$, allowing one to appropriately deal with complex structures, such as overlapping structures, crossing lines \cite{BekkersSIAM}, and even %(non-smooth) 
bifurcations \cite{duitsmeestersmirebeauportegies}.
%Geodesic contour model is a two-step procedure; first a distance map is computed (as a viscosity solution of the eikonal PDE on this higher-dimensional homogeneous space) followed by a gradient descent for computation of the (in this case, sub-Riemannian or, even more generally, sub-Finslerian) geodesics. 

For computing the distance map on $\R^2\times S^1$, there are two main approaches: 

\begin{enumerate}
    \item 
    advanced and efficient anisotropic fast marching methods \cite{mirebeau_anisotropic_2014}, following a semi-Lagrangian or Hamiltonian approach \cite{mirebeau2019hamiltonian},
    
    \item  iterative (eikonal) PDE solvers \cite{BekkersSIAM,berg2025crossing}, which can be truly sub-Riemannian. 
    %and therefore more accurate. 
    %Although these methods are more computationally demanding, they are easy to parallelize, which offsets this disadvantage on modern hardware.
\end{enumerate}

Initially, \citet{SanguinettiFM} showed that the difference in error between both approaches is small.
However, they advocated anisotropic fast marching methods because they were much faster. 

Recently, GPU-based methods have made the more accurate PDE methods a viable, flexible, and fast alternative, as shown in \cite{berg2025crossing}. %via TaiChi \cite{hu2019taichi}
%For the numerical methods in this paper 
We used the latter GPU-approach. 
%mainly because of the flexible, fast, and simple structure of the algorithm.  

%These methods typically act on the homogeneous space of positions and orientations \mbox{$\R^2 \times S^1$} diffeomorphic to the Lie group $\SE(2)=\R^2 \rtimes \SO(2)$ under identification
%\[R_{\theta} \leftrightarrow \theta \in \R/(2\pi \mathbb{Z})\leftrightarrow \bn(\theta)=(\cos\theta, \sin \theta) \in S^1.\]

\subsection{A Motivation for using the Projective Line Bundle \texorpdfstring{\unboldmath $\R^2 \times P^1$}{R2 x P1}}

\begin{figure}[ht]
    \centering
    \begin{subfigure}[T]{0.45\textwidth}
        \centering
        \includegraphics[width=0.67\textwidth]{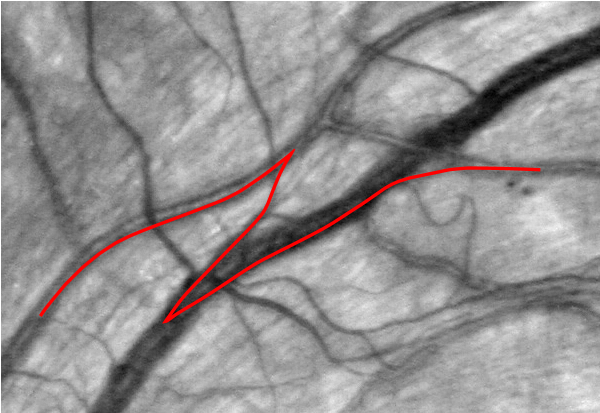}
        \caption{Result of the previous geodesic tracking model on the projective line bundle \cite{BekkersGSI} between two points on a blood vessel. Note that this model can exhibit cusps.}
        \label{fig:previous_tracking}
    \end{subfigure}
    \begin{subfigure}[T]{0.45\textwidth}
        \centering
        \includegraphics[width=0.67\textwidth]{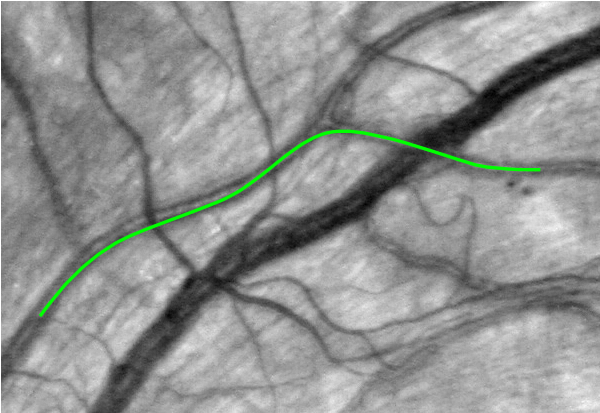}
        \caption{Result of the new cusp-free geodesic tracking model on the projective line bundle,  Eq.~\eqref{dmon}.}
        \label{fig:new_tracking}
    \end{subfigure}
    \caption{Visualization of the benefits of defining a cusp-free geodesic tracking model on the projective line bundle. The cusp in the Fig.~\ref{fig:previous_tracking} arises after spatial projection of the smooth geodesic in $\R^2 \times P^1$ of the model as proposed in \cite{BekkersGSI}.}
    \label{fig:medical_images}
\end{figure}

\begin{figure}[ht]
    \centering
    \begin{subfigure}[T]{0.45\textwidth}
        \centering
        \includegraphics[width=0.9\textwidth]{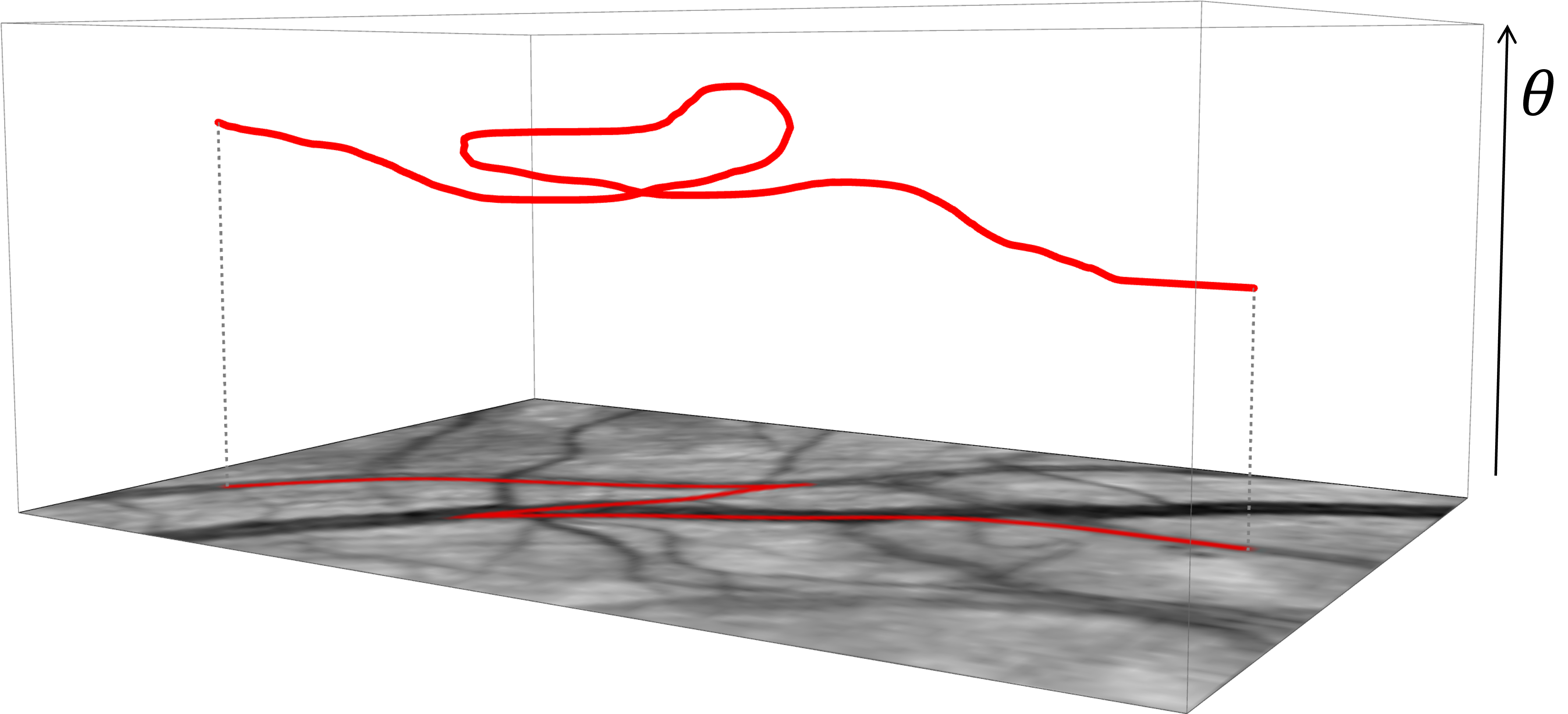}
        \caption{The geodesic found by the previous tracking model on the projective line bundle, cf. \cite{BekkersGSI}. Note that the geodesic is smooth in $\R^2 \times P^1$. The cusps arises after its spatial projection, cf. Fig.\ref{fig:previous_tracking}.}   
        \label{fig:previous_tracking2}
    \end{subfigure}
    \begin{subfigure}[T]{0.45\textwidth}
        \centering
        \includegraphics[width=0.9\textwidth]{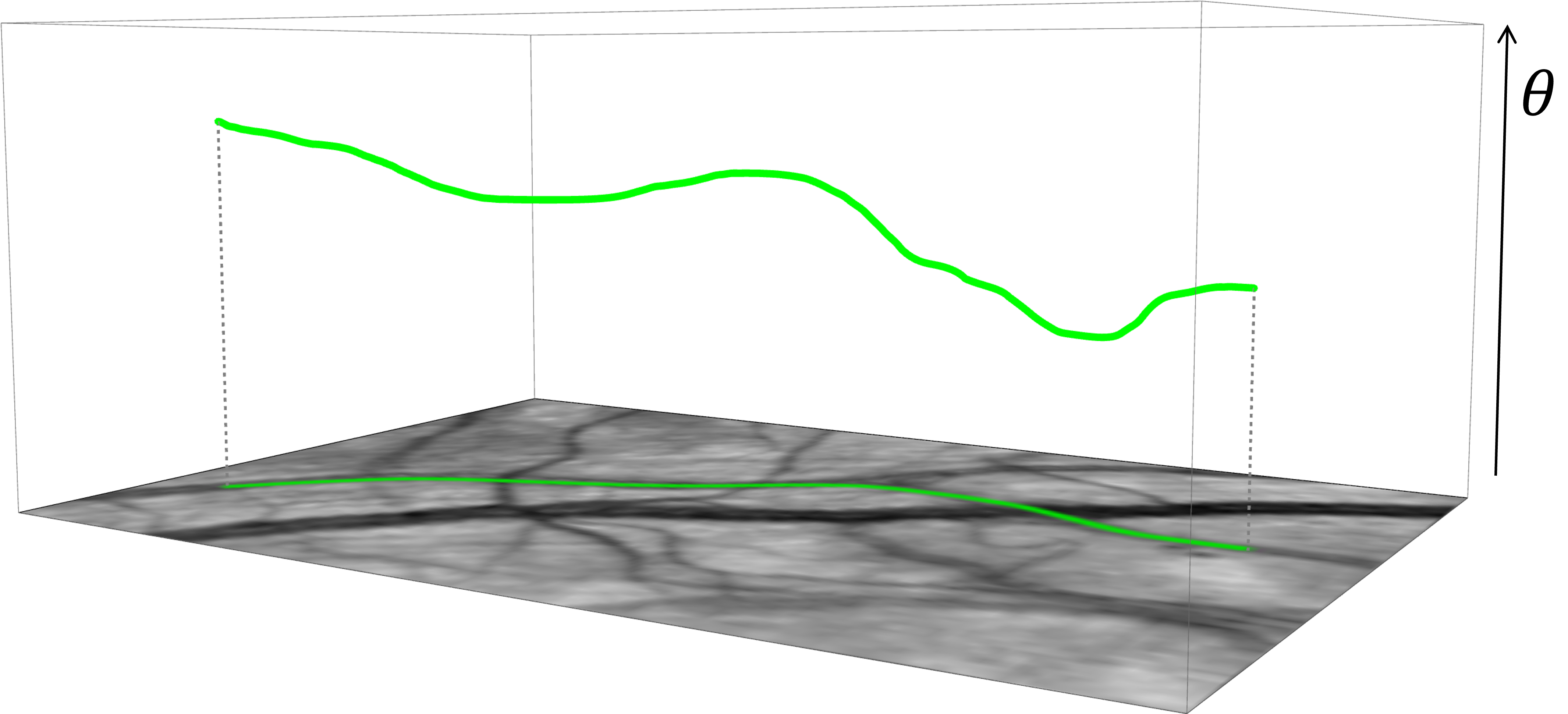}
        \caption{The geodesic found by the new cusp-free geodesic tracking model on the projective line bundle as described in Eq.~\eqref{dmon}. The spatial projection of this new geodesic does not exhibit a cusp cf. Fig.\ref{fig:new_tracking}.} 
        \label{fig:new_tracking2}
    \end{subfigure}
    \caption{Visualization of the smooth geodesics of Fig.~\ref{fig:medical_images} in $\R^2 \times P^1$, where $\theta(t) \in [0,\pi)$ represents the ``orientation'' of the geodesic given by the set $\{\bn(t),-\bn(t)\}$, where $\bn(t)=(\cos \theta(t), \sin \theta(t))$.
    \label{fig:medical_images2}}    
\end{figure} 

Consider a piecewise smooth edge in a two-dimensional image.
A regular point on the edge can be described by its position and the ``orientation'' \emph{along} the edge at this point.
One might decide to encode the orientation using a unit vector $\bn \in S^1$.
However, the antipodal vector $-\bn$ can also be used to describe its orientation.
Given this ambiguity, we simply decide to encode the ``orientation'' of an edge as the set $\{\bn, -\bn\}$.
More formally, we can describe the ``orientation'' as a point in the \emph{projective line} $P^1 := S^1 / \sim$, where $\sim$ is the equivalence relation that identifies antipodal unit vectors with each other, that is, $\bn_1\sim \bn_{2} \iff \bn_1=\pm \bn_2$.

Given the above, we decide to describe a local edge as a point in the \emph{projective line bundle} $\R^2 \times P^1$, that is, as the equivalence class $[\bp]=\{\bp,\overline{\bp}\}$ where $\bp:=(\bx,\bn) \in \R^2 \times S^1$ and $\overline{\bp}:=(\bx,-\bn)$.

In this article, we will study geodesic tracking models on the projective line bundle $\R^2\times P^1$, which %rely on 
minimize over
multiple curve optimizations on \mbox{$\R^2\times S^1$}.
Eight pairs could be made in $\R^2 \times S^1$ out of the two points on the projective line bundle, $[\bp_0]=\{\bp_0,\overline{\bp}_0\}$ and $[\bp_1]=\{\bp_1,\overline{\bp}_1\}$, namely:
\begin{equation}\label{eq:8 pairs}
\begin{array}{cc}
    \text{from }\bp_0 \text{ to } \bp_1,\text{ from } 
    \bp_0 \text{ to } \overline{\bp}_1,\\
    \text{from } \overline{\bp}_0 \text{ to } \bp_1, \text{ from }
    \overline{\bp}_0 \text{ to } \overline{\bp}_1,\\
    \text{from }\bp_1 \text{ to } \bp_0,\text{ from } 
    \bp_1 \text{ to } \overline{\bp}_0,\\
    \text{from } \overline{\bp}_1 \text{ to } \bp_0, \text{ from }
    \overline{\bp}_1 \text{ to } \overline{\bp}_0.
    \end{array}
\end{equation}
A priori, we need to consider all eight pairs, since the %asymmetric Finslerian 
length (as we will see in Def.~\ref{def:ASFD}) of the geodesic from $\bp$ to $\bq$ need not be equal to the length of the geodesic from $\bq$ to $\bp$ for all $\bp, \bq \in \R^2 \times S^1$.
% All eight pairs must be considered, since the geodesic tracking models used to connect two points are not necessarily symmetric.
%Later, when we explain the geometric models, we recognize symmetries that allow to reduce these $8$ cases to either $4$ or $2$ depending on the model.  

% \begin{definition}[Antipodal points]\label{def:eq}
%     A point $[\bp] \in \R^2\times P^1$ is given by the equivalence class $[\bp]=\{\bp,\overline{\bp}\}$ 
%     with $\bp:=(\bx,\bn) \in \R^2 \times S^1$ and $\overline{\bp}:=(\bx,-\bn) \in \R^2\times S^1$. 
% \end{definition}

In \citet{BekkersGSI}, a symmetric %(sub-Riemannian) 
geodesic tracking model on $\R^2 \times S^1$ is proposed to connect the points $[\bp_0]$ and $[\bp_1]$ in the projective line bundle $\R^2 \times P^1$. 
We will refer to this model as $d_{\mathrm{proj}}$, where `proj' is short for projective line bundle. 
The details of the model $d_{\mathrm{proj}}$ are given in Section~\ref{sec:dproj}.
The model $d_{\mathrm{proj}}$ is well-posed on $\R^2 \times P^1$, and builds on a data-driven version \cite{BekkersSIAM} of the sub-Riemannian model on $\R^2 \times S^1$ by Citti \& Sarti \cite{citti2006cortical}.

By symmetry arguments in the model $d_{\mathrm{proj}}$ \cite[Eq.~4]{BekkersGSI}, Bekkers et al. showed that the 8 pairs in \eqref{eq:8 pairs} can be reduced to only 2 representative pairs:
%\begin{equation*}
$    \text{from }\bp_0 \text{ to } \bp_1,\text{ and from } 
    \overline{\bp}_0 \text{ to } \bp_1.
    $
%\end{equation*}
%where, again due to symmetries, the final pair can be replaced by the pair $\bp_0 \text{ to } \overline{\bp}_1$.

However, the model $d_{\mathrm{proj}}$ sometimes resulted in geodesics with so-called \emph{cusps}, cf.~\citet{Boscain2}. 
An example of a cusp is visualized in Figs.~\ref{fig:previous_tracking},~\ref{fig:previous_tracking2}.
The cusp-problem is a fundamental problem, which we aim to tackle via a new well-posed model on the projective line bundle  $\R^2\times P^1$. 
The definition of a cusp is given in Section~\ref{ch:cuspscontrol}.

Intuitively, if we were to compare a geodesic with the trajectory of a car, a cusp can be viewed as reversing the motion direction of a car (switching from a forward to a backward motion or the other way around).
In this article, we will overcome this problem by setting up a new model on the projective line bundle $\R^2\times P^1$. 
% ; see Figs.~\ref{fig:new_tracking},~\ref{fig:new_tracking2}.

\subsection{A Cusp-Free Model on \texorpdfstring{\unboldmath $\R^2 \times P^1$}{R2 x P1}}

\begin{figure*}[t]
\centering
    \hfill
    \begin{subfigure}[t]{0.3\textwidth}
        \centering
        \includegraphics[width=3.3cm, trim={75 75 75 75}, clip]{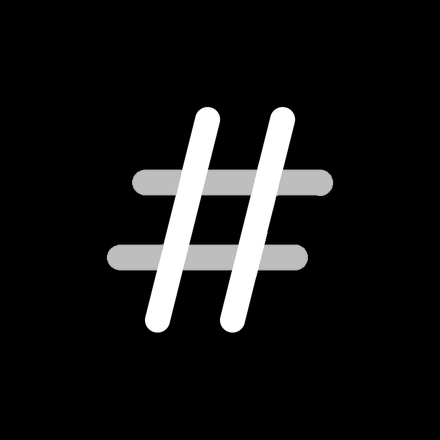}
        \caption{Example image with overlapping structures. The background structures %in the background 
        have lower intensity.}
        \label{fig:CC_input}
    \end{subfigure}    
    \hfill
    \begin{subfigure}[t]{0.3\textwidth}
        \centering
        \includegraphics[width=3.3cm, trim={75 75 75 75}, clip]{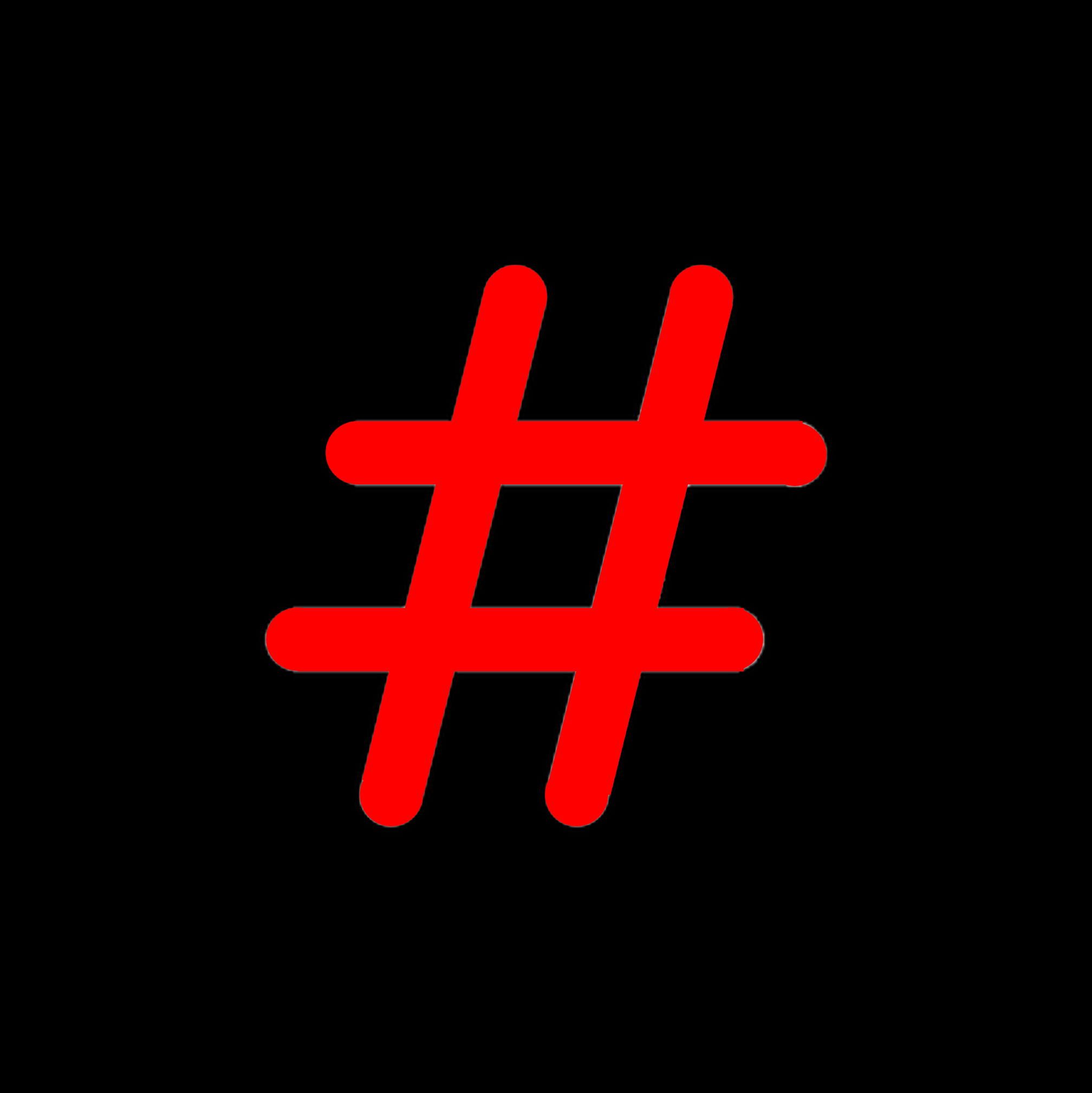}
        \caption{Output of a Connected Component algorithm in $\R^2$.}
        \label{fig:CC_R2}
    \end{subfigure}
    \hfill
    \begin{subfigure}[t]{0.3\textwidth}
        \centering
        \includegraphics[width=3.3cm, trim={75 75 75 75}, clip]{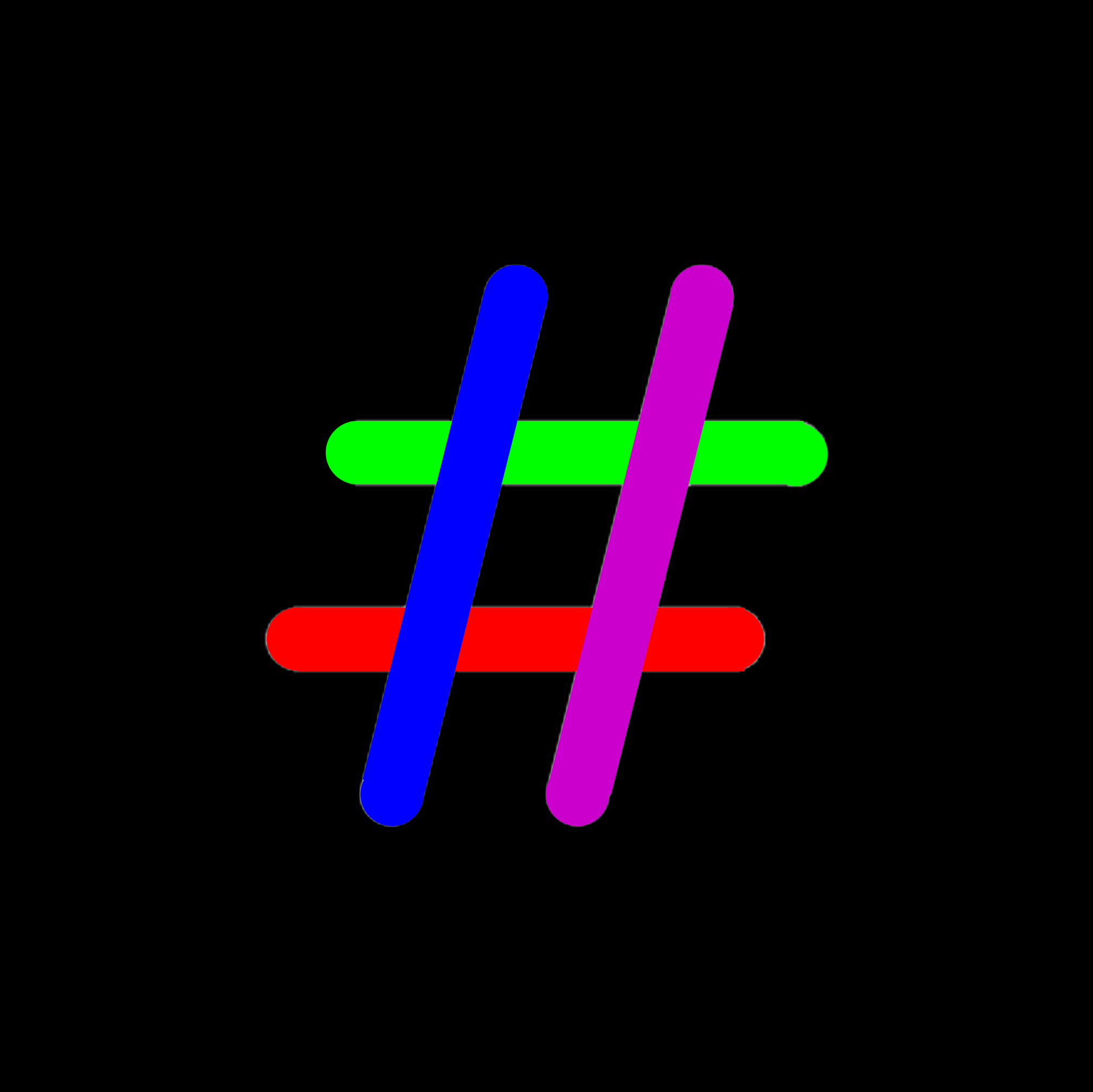}
        \caption{Output of a connected Component algorithm in $\R^2 \times S^1$ as proposed in \cite{berg2024connectedcomponentsliegroups}.}
        \label{fig:CC_M2}
    \hfill~
    \end{subfigure}
    \caption{Visualization of the advantages of the connected component algorithm from \cite{berg2024connectedcomponentsliegroups} in $\R^2\times S^1$. The central image illustrates that a standard connected component algorithm in  $\R^2$ failed to correctly group the four components, while the method of \cite{berg2024connectedcomponentsliegroups} successfully identified them, as shown by the four colors.}\label{fig: Example CC}
\end{figure*}

\begin{figure*}[t]
\centering
\hfill
    \begin{subfigure}[T]{0.45\textwidth}
        \centering
        \includegraphics[width=3.3cm, trim={75 75 75 75}, clip]{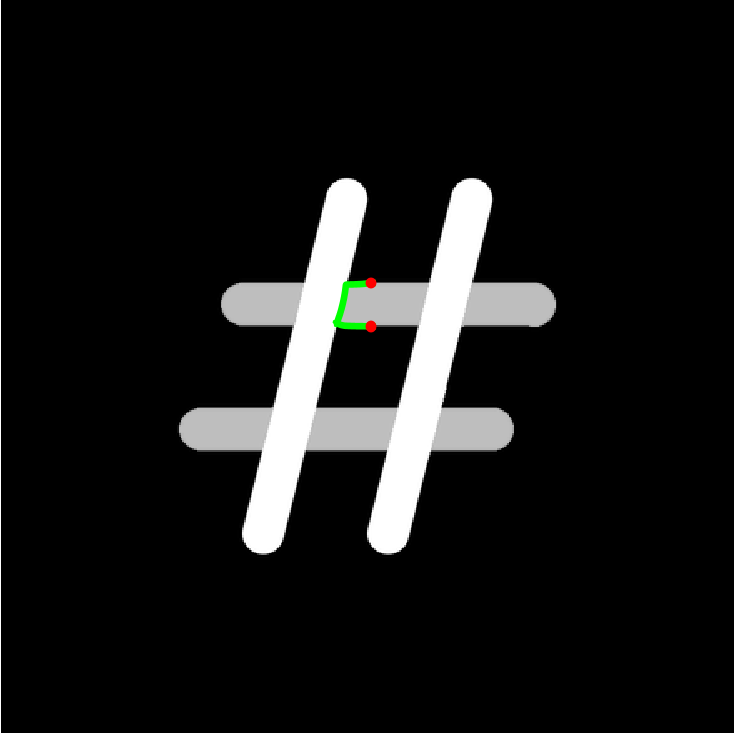}
        \caption{Output of the new geodesic tracking model with the standard cost function $\mathcal{C}$ which is \emph{not} informed by the connected component algorithm of \cite{berg2024connectedcomponentsliegroups}. The green line represents the minimizing geodesic between the two red dots.}
        \label{fig:tracking_not_informed}
    \end{subfigure}
    \hfill
    \begin{subfigure}[T]{0.45\textwidth}
        \centering
        \includegraphics[width=3.3cm, trim={75 75 75 75}, clip]{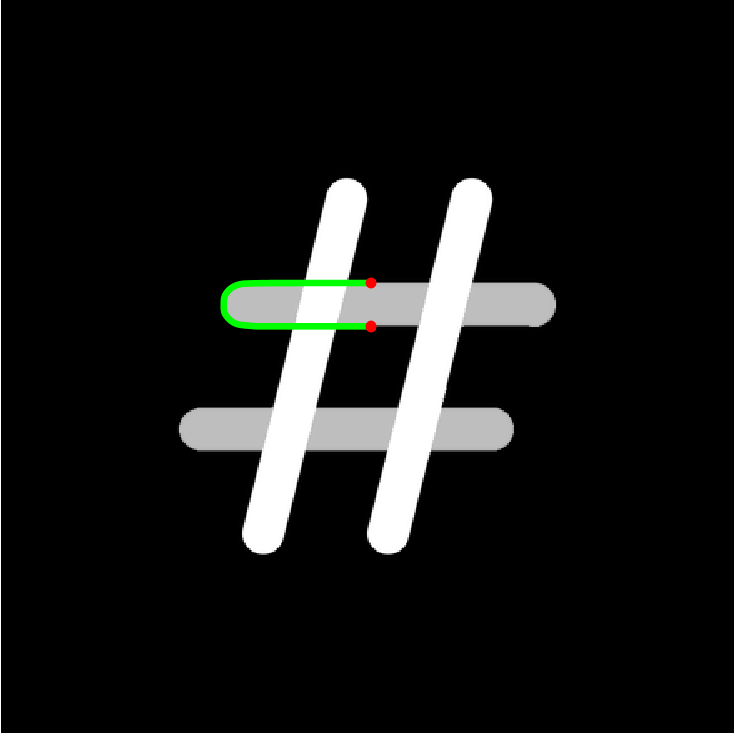}
        \caption{Output of the new geodesic tracking model with the cost function $\mathcal{C}_{\mathrm{new}}$ \eqref{CNEW} which is informed by the connected component algorithm of \cite{berg2024connectedcomponentsliegroups}. The green line represents the minimizing geodesic between the two red dots.}
        \label{fig:tracking_informed}
    \end{subfigure}
    \hfill~
    \caption{Visualization of the advantages of informing the cost function with the connected component algorithm of \cite{berg2024connectedcomponentsliegroups}. By informing the cost function, we reduce the risk of producing a geodesic that switches between edges of multiple components.}
    \label{fig: Example grouping of cost function}
    
\end{figure*}

We introduce a new model on $\R^2 \times P^1$ that yields cusp-free contours which we refer to as $d_{\mathrm{c}}$ (the $c$ referring to contours).
It works by connecting the 8 instances in \eqref{eq:8 pairs} using a different geodesic tracking model on $\R^2 \times S^1$ than in the model $d_{\mathrm{proj}}$ of \citet{BekkersGSI}.
The new model's formal introduction, and the intuition behind it, will follow in Section~\ref{ch:dc}.

The main differences between the  models $d_{\mathrm{c}}$ and $d_{\mathrm{proj}}$ on the projective line bundle $\R^2 \times P^1$ are:
\begin{enumerate}
    \item The model $d_{\mathrm{c}}$ is cusp-free, whereas the previous model $d_{\mathrm{proj}}$ may produce geodesics with cusps. 
    This is because the new model $d_{\mathrm{c}}$ relies on a cusp-free geodesic tracking model on $\R^2 \times S^1$, as proposed by \citet{duitsmeestersmirebeauportegies}. 
That model restricts the car to use only its forward gear, thereby eliminating cusps that occur when switching gears. 
    Figs.~\ref{fig:previous_tracking},~\ref{fig:previous_tracking2} compared to Figs.~\ref{fig:new_tracking},~\ref{fig:new_tracking2}, clearly illustrates this difference.
    \item Our model $d_{\mathrm{c}}$ relies on the asymmetric (Finslerian) geodesic tracking model of \cite{duitsmeestersmirebeauportegies} on $\R^2 \times S^1$, whereas the model $d_{\mathrm{proj}}$ relies on a symmetric (sub-Riemannian) model on $\R^2 \times S^1$. 
    Consequently, the 8 instances in \eqref{eq:8 pairs} cannot be reduced to the same 2 representative pairs as for $d_{\mathrm{proj}}$. 
    Instead, we will reduce $d_{\mathrm{c}}$ 
    %may be reduced 
    to 4 instances. 
    \item Another main advantage of our new model $d_{\mathrm{c}}$ over $d_{\mathrm{proj}}$ in \cite{BekkersGSI} is that it includes a connected-component-informed cost function. 
    The models $d_{\mathrm{c}}$ and $d_{\mathrm{proj}}$ become data-driven via a cost function $\mathcal{C}$.
    This cost function puts low costs on edges/lines present in the image and high costs everywhere else, and we use a standard choice 
    %as described in 
\cite{BekkersGSI,berg2024geodesic,duitsmeestersmirebeauportegies,berg2025crossing}.
    Details will follow in Section~\ref{Section: connected-component-informed cost function}.
    
    In the new model $d_{\mathrm{c}}$, we %want to 
    inform the cost function $\mathcal{C}$ with the connected component algorithm on $\R^2 \times S^1$ of \cite{berg2024connectedcomponentsliegroups}. 
   % By lifting images to $\R^2\times S^1$, \citet{berg2024connectedcomponentsliegroups} developed an algorithm that can group overlapping components. 
   That algorithm can correctly group overlapping structures, see Fig.~\ref{fig: Example CC}, and by including it in our tracking model (by replacing $\mathcal{C}$ with $\mathcal{C}_{\mathrm{new}}$, cf.~Section~\ref{Section: connected-component-informed cost function}) our model will track the correct connected component, see Fig.~\ref{fig: Example grouping of cost function}.  
\end{enumerate}

% In contrast to the model of \cite{duitsmeestersmirebeauportegies} on $\R^2\times S^1$, the new model $d_{\mathrm{c}}$ on $\R^2 \times P^1$ is symmetric. 

% \begin{remark}[Computation minimizing geodesics] \label{rem:2}
%     The minimizing geodesic(s) in $\R^2 \times S^1$ follows from backtracking on a distance map $d(\bp_0,\cdot)$ starting at $\bp_1$ by gradient descent, cf. \cite[Thm.4]{duitsmeestersmirebeauportegies}. Various methods exist for fast computation of the distance map  \cite{mirebeau_anisotropic_2014,mirebeau2019hamiltonian,berg2025crossing}. Details and our choice of algorithm will follow in Section~\ref{ch:track}. Examples of backtracking output in Medical_Images are visualized in Fig.~\ref{fig:medical_images}.
% \end{remark}

\begin{figure}[t]
    \centering
    \includegraphics[width=0.9\linewidth]{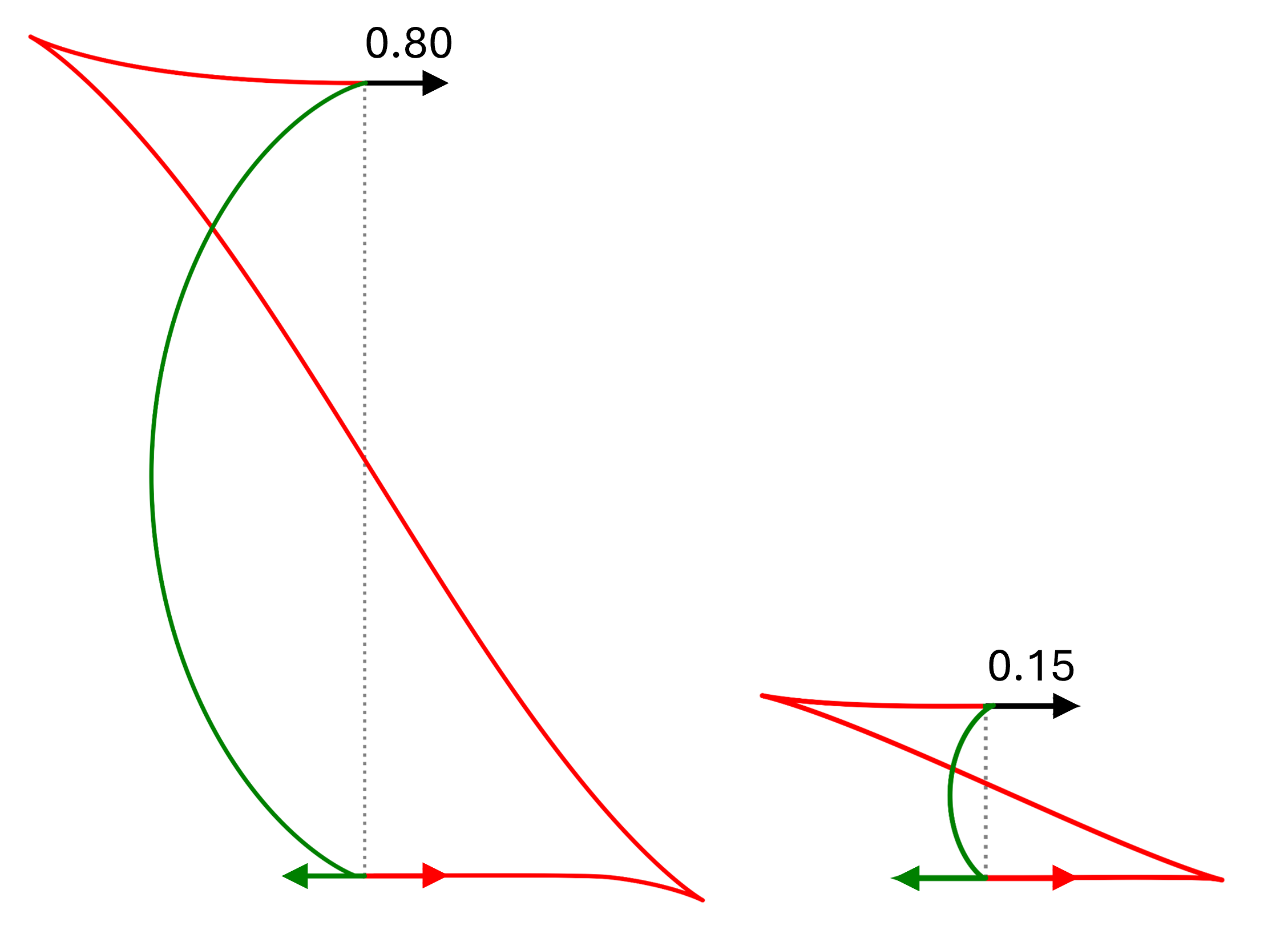}
    \caption{In red the projective line bundle minimizing (spatially projected) geodesic of $d_{\mathrm{proj}}([\bp_0],[\bp_1^\varepsilon])$ for $\bp_1^\varepsilon=((0,\varepsilon),(1,0))$ with $\varepsilon=0.80, 0.15$, \mbox{$\mathcal{C}=\xi=1$}. In green the cusp-free minimizing geodesic in the new model in $d_{\mathrm{c}}([\bp_0],[\bp_1^\varepsilon])$. When  the end condition (arrow in black) approaches the initial condition along the vertical axis (the dotted line) the red geodesic's length (\ref{Eq: finsler distance}) vanishes, whereas the green geodesic's length converges to $\pi$, see Prop.~\ref{corr:tobewritten}.}\label{fig:lat}
\end{figure} 

In this article, we advocate for the use of $d_{\mathrm{c}}$ instead of $d_{\mathrm{proj}}$ for our image analysis applications, and we mathematically analyze both the similarities and the differences of the two models. 

Later, in Theorem~\ref{th:main}, we will show that the new \emph{cusp-free} model $d_{\mathrm{c}}$ is not a distance in the strict sense, as it only satisfies the inequality on a large subset $\R^2 \times P^1$, where the models $d_{\mathrm{c}}$ and $d_{\mathrm{proj}}$ coincide.
%Nevertheless, as we show in the same theorem, it is equal to the projective line bundle distance $d_{\mathrm{proj}}$ on a large set $\tilde{\mathcal{Q}}_{\xi}$, which will be defined later. As $d_{\mathrm{proj}}$ satisfies the triangle inequality, we can deduce that $d_{\mathrm{c}}$ often satisfies the triangle inequality, as we will clarify in our Theorem~\ref{th:main}.
In practice, the key differences between $d_{\mathrm{c}}$ and $d_{\mathrm{proj}}$ arise when the end condition $[\bp_1]$ approaches the initial condition $[\bp_0]$ from the side, cf. Fig.~\ref{fig:lat}. 
There the green curves (new geodesics) are much more suitable for our applications than the red curves (previous geodesics). 
This will be formalized in Prop.~\ref{corr:tobewritten}. 
Although these green geodesics violate the triangle inequality, their behavior is desirable.
% Furthermore, we will explain how the Maxwell set simplifies and reduces under the new model (cf.~Figs. \ref{fig:Maxwellmon},  \ref{fig:Maxwellproj}).

The practical advantages of model $d_{\mathrm{c}}$ over model $d_{\mathrm{proj}}$ are qualitatively evident in Fig.~\ref{Fig: results ERIK}.
Several quantitative advantages are also demonstrated in Section~\ref{section: experiment}.

\subsection{Applications to SEM Images}
Scanning Electron Microscopy (SEM) has become a widely used tool in semiconductor metrology. Due to its high resolution, an electron microscope is a suitable tool for measuring, for example, the width and diameter of structures such as lines or holes in electronic devices on silicon wafers. 

In recent years, structures have shrunk in size, but have expanded vertically. These vertical structures, such as FinFETs (Fin Field-Effect Transistors) and gate-all-around transistors, are now key in the semiconductor industry \cite{timhouben}.
%(for an overview, see ~\cite{WikipediaMultigateDevice}), 
The commonality between these electronic devices is the overlapping structures (by inspection from above). 

Fig.~\ref{fig: SEM example} shows an SEM image of a FinFET, where the horizontal structures are located underneath the vertical structures. The image analysis task is to correctly segment such overlapping structures. 

Our segmentation algorithm is performed in the higher-dimensional space $\R^2 \times P^1$ to adequately deal with overlapping structures, and is visualized in the flowchart of Fig.~\ref{fig: flowchart}. 
The first two steps are a visualization of the connected component algorithm of \citet{berg2024connectedcomponentsliegroups}, upon which this algorithm is built. 
The first step shows the lifting of the data to $\R^2 \times P^1$, followed by a visualization of the output of the connected component algorithm in $\R^2 \times P^1$, where every component has a distinct color. 
The initial contour for our snake model can be derived from the connected component algorithm and is shown in Step 3 of Fig.~\ref{fig: flowchart}. Its placement will be refined in later steps. 

We have two algorithms for the refinement of the initial contour. For straight edges (blue in Step 3 of Fig.~\ref{fig: flowchart}), we use a standard edge detection operator in $\R^2$, as visualized in Step 4b. 
Our choice is the scale selection algorithm of \cite{lindeberg1998edge} combined with the edge-focusing algorithm \cite{Sjberg1988ExtractionOD,BartTeHaarRomenyFront} based on the norm of the Gaussian gradient. 

However, this efficient method fails on highly curved edges (red in Step 3 of Fig.~\ref{fig: flowchart}). 
For these parts, we use the newly proposed geodesic tracking model in $\R^2 \times P^1$. 
Since this space has a higher dimensionality than $\R^2$, computations in $\R^2 \times P^1$ are %inherently 
more %computationally 
demanding.
By integrating a switching criterion into the framework, computations in \mbox{$\R^2 \times P^1$} are performed only when necessary.

Step 4a of Fig.~\ref{fig: flowchart} visualizes the results of the new geodesic tracking model. 
The final contours are presented in the last step of Fig.~\ref{fig: flowchart}, where each component %has a distinct 
is color-coded.

\begin{figure}[t]
\centering
\includegraphics[width=0.5\linewidth
]{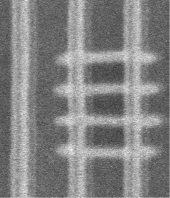}
\caption{SEM image with %containing 
overlapping %electronic 
structures. 
%This image depicts a processing step in the manufacturing of a FinFET.
}\label{fig: SEM example}
\end{figure}

\begin{figure*}[t]
    \centering
    \includegraphics[width=\linewidth]{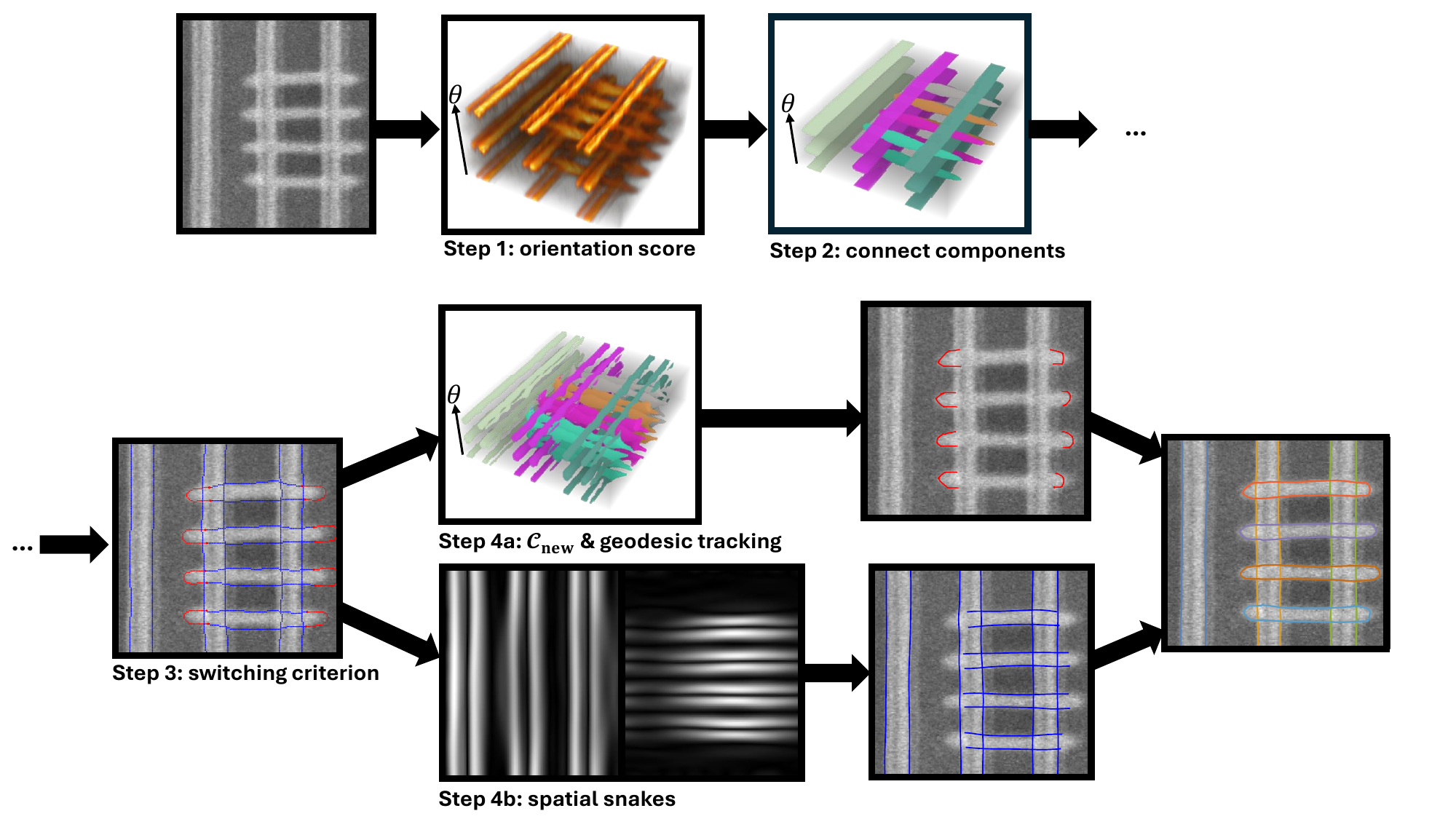}
    \caption{The flowchart of our algorithm applied to a SEM image (of a FinFET). The first two steps visualize 
    the connected component output in the lifted space of positions and orientations of the algorithm in \cite{berg2024connectedcomponentsliegroups}. Step 3 shows the switching criterion based on the curvature of the edge, after which the algorithm splits in two; An accurate algorithm (4a) and fast algorithm (4b). Step 4a visualized the connected-component-informed cost function $\mathcal{C}_{\mathrm{new}}$. Step 4b shows the scale selection (\cite{lindeberg1998edge}) and edge refinement method in the lateral direction. The results of both algorithms are combined in the final step yielding correctly segmented overlapping electronic structures in the SEM image.}\label{fig: flowchart}
\end{figure*}

\subsection{Contributions}
In this paper, we propose a new snake model on the projective line bundle to segment overlapping structures in SEM images. 
More specifically, our main contributions are categorized as follows: 
\begin{itemize}
\item 
\textbf{Theoretical: }
The geodesic tracking part of our model is, to the best of our knowledge, the first symmetric, cusp-free geometric model on the projective line bundle. We prove in Theorem~\ref{th:main} that it coincides with the previous model if the boundary conditions can be connected with a cusp-free geodesic.
We also visualize the important differences between the models if this is not the case (Fig.~\ref{fig:monprojoutcone}), and show the differences in the development of the Maxwell set (Figs. \ref{fig:Maxwellmon}, \ref{fig:Maxwellproj}). In Prop.~\ref{corr:tobewritten} we show that the new cusp-free model is globally controllable but not locally controllable.  
\item \textbf{Algorithmical:} We provide a computational model for snakes in the projective line bundle where we use a switching criterion between geodesic tracking and a simple edge detection model to avoid unnecessary computations.
It also includes a connected-component-informed cost function. Implementations of the full pipeline can be downloaded from \url{https://github.com/LeanneVis/Snakes-on-the-Projective-Line-Bundle}.
\item \textbf{Experimental: }
We show in Section~\ref{section: experiment} that our method performs well qualitatively (Figs.~\ref{fig:qualitative_results},~\ref{Fig: results ERIK}) and quantitatively (Figs.~\ref{Fig: MASD distance},~\ref{Fig: Hausdorff distance}), compared to the approach of \cite{BekkersGSI}.
Finally, we show the advantage of the connected component
informed cost function (Fig.~\ref{Fig: results ungrouped cost function}). 
Output examples of our method applied to various SEM images are shown in Fig.~\ref{fig:introexpgen}, where we see correct identification of the overlapping structures.

\begin{figure}[t]
    \centering
    \includegraphics[width=0.85\linewidth]{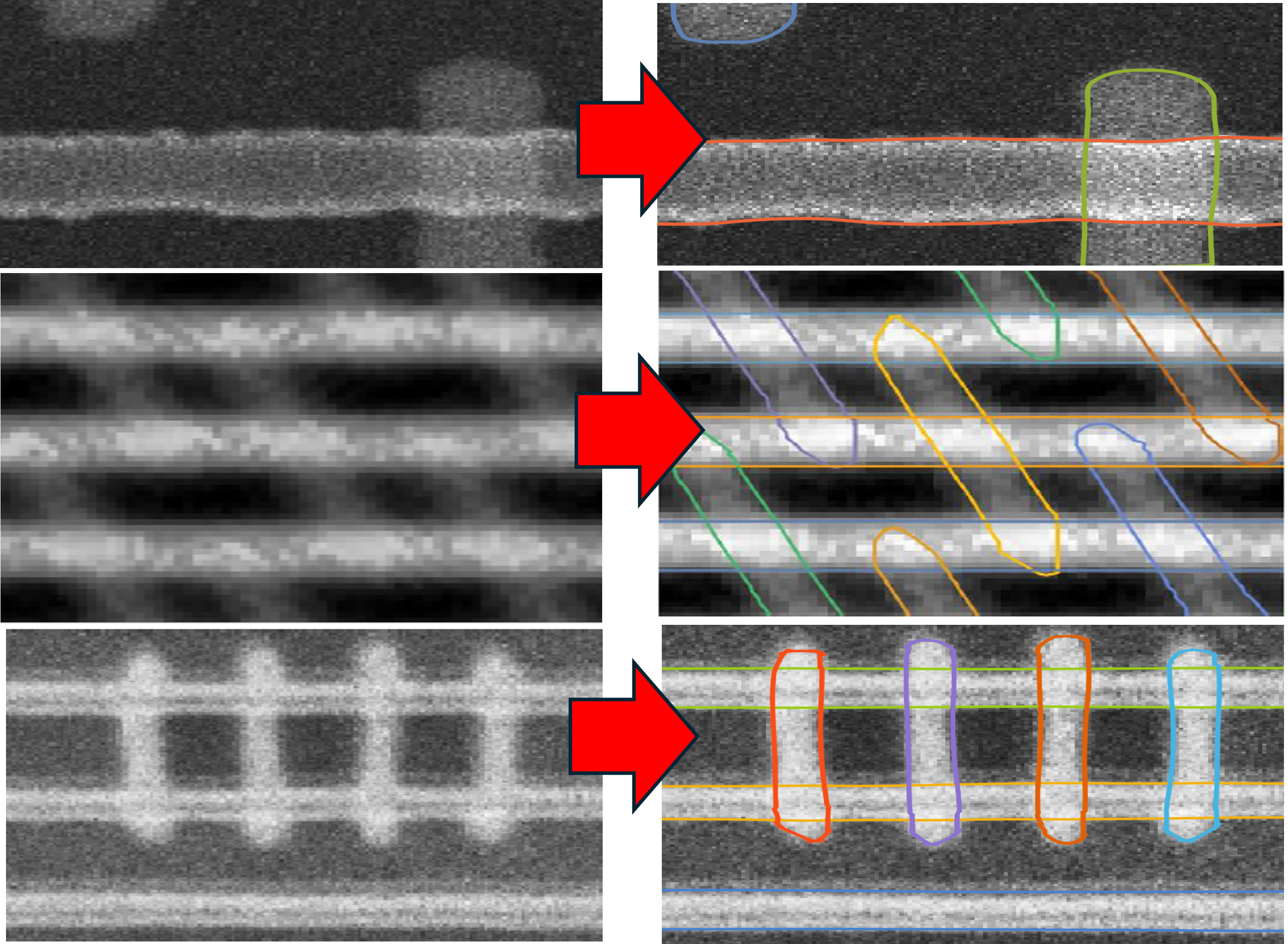}
    \caption{Output of our method on various SEM images. 
    For common parameters, see Section~\ref{section: experiment}.
\label{fig:introexpgen}}
\end{figure}
%and the switching criterion Tab.~\ref{} 
\end{itemize}
\begin{comment}
Regarding the experimental part, we show promising feasibility studies. 
As illustrated in Fig.~\ref{fig:introexpgen} our method produces good results on various SEM images from different data sets. 
An evaluation (with clear improvements to existing crossing-preserving tracking methods \cite{BekkersGSI} on $\R^2\times P^1$, and with clear improvements through the connected-component-informed cost function) is presented in Section~\ref{section: experiment}.
A wider scale comparison (also to geometric deep learning methods) is beyond the scope of this mathematically oriented article and left for a future more 
application-oriented follow-up article. 
\end{comment}

\subsection{Structure of the Article}
In Sec.~\ref{ch:newmodel} we formally  introduce the new metric model $d_{\mathrm{c}}$ on the projective line bundle $\R^2\times P^1$, and the previous metric model $d_{\mathrm{proj}}$.

In Sec.~\ref{section: preliminaries} we cover preliminary theory on the space of the 2D positions and orientations.
This theory supports
the subsequent analysis of $d_{\mathrm{c}}$ and the numerical computations of the geodesics.

In Sec.~\ref{sec:cuspfreemodel} we analyze, visualize and compare the new geodesic tracking model $d_{\mathrm{c}}$ with the previous model $d_{\mathrm{proj}}$ 
%as suggested in 
\cite{BekkersGSI}, and provide our main theoretical results (Thm.~\ref{th:main}, Prop.~\ref{corr:tobewritten}).
In Sec.~\ref{Section: connected-component-informed cost function} we provide the algorithm to inform the cost function $\mathcal{C}$ with the connected component algorithm of \cite{berg2024connectedcomponentsliegroups}, resulting in a connected-component-informed cost function $\mathcal{C}_{\mathrm{new}}$.
In Sec.~\ref{section: switching criterion} we provide a criterion for our snake model to switch between the new geodesic tracking model and a faster 2D edge detection. 

Then all established differential geometric tools are included in the overall snake algorithm on $\R^2 \times P^1$ (depicted in Fig.~\ref{fig: flowchart}). 
Experiments of Sec.~\ref{section: experiment}, show the advantages of the connected-component-informed cost function. They also show improved 
%qualitative and quantitative 
results of the task of contouring overlapping structures in SEM-images, compared to the approach of \citet{BekkersGSI}. 
Finally, we summarize and conclude in Sec.~\ref{section: conclusion}.

\section{The Model \texorpdfstring{\unboldmath $d_{\mathrm{c}}$}{dc} on the Projective Line Bundle \label{ch:newmodel}}
In this section, we introduce our new metric model on the projective line bundle $\R^2\times P^1$ which avoids cusps in spatial projections of geodesics. This model takes the minimum over multiple curve optimizations in $\R^2\times S^1$, relating to the 8 instances in Eq.~\eqref{eq:8 pairs}. In those curve optimizations, we will rely on an existing (cusp-free) model $d_{\mathcal{F}_0^+}$ on $\R^2 \times S^1$ \cite{duitsmeestersmirebeauportegies}.   

First, in Subsection~\ref{ch:cuspscontrol} we will give a formal definition of cusps (and control variables) and define the space of piecewise $C^1$-curves over which we optimize in $d_{\mathcal{F}_0^+}$.

Secondly, in Subsection~\ref{Section: Finslerian distance} we define $d_{\mathcal{F}_0^+}$. This metric is asymmetric, but does satisfy an intriguing antipodal symmetry, as we will prove in Lemma~\ref{Lemma: antipodal symmetry}.

Thirdly, in Subsection~\ref{ch:dc}
we introduce the model $d_{\mathrm{c}}$ on $\R^2\times P^1$ and show that it is symmetric, in contrast to $d_{\mathcal{F}_0^+}$. 
Here we use  Lemma~\ref{Lemma: antipodal symmetry} to minimize over four (rather than eight) instances.
In Subsection~\ref{ch:minimalcontours} we provide an intuitive viewpoint on $d_{\mathrm{c}}$, where optimization takes place over lifted contours rather than curves. 
This viewpoint will be formalized later in Section~\ref{sec:cuspfreemodel} (Prop.~\ref{Corollary: sign-semidefinite distance}).

Finally, in Subsection~\ref{sec:dproj} we point out some of the key differences between our proposed $d_{\mathrm{c}}$ model and the model proposed in \citet{BekkersGSI}.

\subsection{Curves, Controls and Cusps \label{ch:cuspscontrol}}

Throughout this article, we will restrict ourselves to certain curves in $\R^2 \times S^1$ for the curve optimization that connects two given distinct points $\bp_0,\bp_1 \in \R^2 \times S^1$ with a geodesic of minimal length. 

\begin{definition}[Candidate Curves]
    \label{def:Gamma}    
    Let $\Gamma(\bp_0,\bp_1)$ denote the set of piecewise $C^1$-curves $\gamma(\cdot):[0,1] \to \R^2 \times S^1$ with $\gamma(0)=\bp_0$, $\gamma(1)=\bp_1$.\\
    Within this set of curves we define the subsets
    \begin{align*}
        \Gamma_0(\bp_0,\bp_1) = \big\{
        &\gamma(\cdot)=(\bx(\cdot), \bn(\cdot)) \in \Gamma(\bp_0,\bp_1) \mid
        \\
        &\dot{\bx} \in \textrm{span}\{\bn\} \text{ a.e.} \big\}, 
        \\
        \Gamma_0^{+}(\bp_0,\bp_1) =
        \big\{&\gamma \in \Gamma_0(\bp_0,\bp_1) \mid
        \dot{\bx}\cdot \bn \geq 0
        \text{ a.e.}
        \big\},  
    \end{align*}
    with constraints on the spatial velocity $\dot{\bx}$ of  $\gamma$. 
    Curves in $\Gamma_0(\bp_0,\bp_1)$ are  called `horizontal curves'.
\end{definition}

\begin{definition}[Velocity Controls]\label{def:controlu1}
Let $\gamma \in \Gamma_0(\bp_0,\bp_1)$. Then there exist piecewise $C^1$ functions $u^1,u^3:[0,1] \to \R$ so that
\[
\dot{\gamma}=(\dot{\bx},\dot{\bn})=
\left(u^1 \bn, u^{3}\hat\bn\right),
\]
where $\bn(t)=(\cos \theta (t), \sin \theta(t))^T$ and $\hat\bn(t)=(-\sin \theta (t), \cos \theta(t))^T$. 
We refer to $u^1$ as the \emph{spatial control} and $u^3$ as the \emph{angular control}.
We can express these controls as follows
    \begin{equation} 
    \begin{array}{l}
        u^1(t)=\dot{\bx}(t) \cdot \bn(t), \\
        u^{3}(t)= \dot{\theta}(t), \textrm{ for all }t \in [0,1].
        \end{array}
    \end{equation}
\end{definition}
If $\gamma \in \Gamma^{+}_0(\ul{p}_0,\ul{p}_1)$ then the control $u^1(t)\geq 0$ along the curve. 
If $\gamma \in \Gamma_0(\ul{p}_0,\ul{p}_1)$ then the spatial control $u^1$ may have a zero-crossing and cusps arise in the spatial projection of the geodesic:
\begin{definition}[Cusp]\label{def:cusp}
    Let $\gamma \in \Gamma_0(\bp_0,\bp_1)$.
    A \emph{cusp} is given by a zero-crossing of the spatial control $u^1(t)$ at some time $t=t_0$: 
    \begin{equation} \label{eq:cusp}
        u^1(t_0) =0 \; \text{and} \; \dot{u}^1(t_0)\neq 0.
    \end{equation}    
\end{definition}

\subsection{\texorpdfstring{Distance Model \unboldmath $d_{\cF_0^+}$}{Cusp-Free Distance Model} on \texorpdfstring{\unboldmath $\R^2 \times S^1$}{R2 x S1}}\label{Section: Finslerian distance}
%\subsection{A Cusp-free Model \texorpdfstring{$d_{\mathrm{c}}$ on \unboldmath $\R^2 \times P^1$}{dc on R2 X P1}}

% If we compare a smooth path $t \mapsto \gamma(t)=(\bx(t),\bn(t))$ 
% with $\dot{\bx}/\|\dot{\bx}\|= \pm \bn$ 
% with the trajectory of a car, then the car has its spatial location $\bx(t)$ and its direction of motion $\bn(t)$. Intuitively, a cusp can be compared to changing the car's gears (i.e., switching from a forward to a backward motion or the other way around). For example, consider the car driving along the red curve in Fig.~\ref{fig:medical_images}a it switches from forward to backward gear and back. The mathematical definition of a cusp is given as follows.

The cusp-free model on $\R^2 \times S^1$, the so-called `Reeds-Shepp car without reverse gear', as proposed in \citet{duitsmeestersmirebeauportegies}, is an asymmetric Finslerian version of the sub-Riemannian distance model \cite{citti2006cortical,Petitot2017Neurogeometry,duits2014association,bellaard2023analysis}. It imposes a positive spatial control $u^1\geq 0$ on the set of curves ($\Gamma_0^+$ instead of $\Gamma_0$) over which we optimize. There, one minimizes on $\R^2 \times S^1$:
\begin{definition} [Finsler distance $d_{\cF_0^+}$]
\label{def:ASFD}
The \emph{asymmetric} Finsler distance \cite{duitsmeestersmirebeauportegies} is denoted by $d_{\cF_0^+}$ and equals:
\begin{equation} \label{Eq: finsler distance forward}
    \!d_{\cF^{+}_0}(\bp_0,\bp_1) = \!\!\inf_{\gamma \in \Gamma_0^+(\bp_0, \bp_1)}
    \int \limits_0^1\cF_0(\gamma(t),\dot{\gamma}(t)) \mathrm{d}t,
\end{equation}
with the set of curves $\Gamma^+_0(\bp_0, \bp_1)$ defined in Def.~\ref{def:Gamma}. 
The corresponding Finsler function $\cF_0$ is given by 
\begin{equation}\label{Eq: finsler function}
\begin{array}{l}
   \cF_0(\gamma(t),\dot{\gamma}(t)) \\
   \ \ = \mathcal{C}(\gamma(t))\sqrt{\xi^2|u^1(t)|^2 + |u^{3}(t)|^2}, 
    \end{array}
\end{equation}
with an a priori given continuously differentiable cost function $\mathcal{C}(\cdot)\geq \delta>0$ with $\xi>0$. 
\end{definition}

The parameter $\xi>0$ of \eqref{Eq: finsler function} regulates the \emph{bending stiffness} of the minimizing geodesics, i.e. the curve(s) $\gamma$ minimizing \eqref{Eq: finsler distance forward}.

The minimizing geodesics follow from backtracking on the distance map $d_{\cF_0^+}(\bp_0,\cdot)$ starting at $\bp_1$ by gradient descent, cf. \cite[Thm.4]{duitsmeestersmirebeauportegies}. Various methods exist for fast computation of the distance map  \cite{mirebeau_anisotropic_2014,mirebeau2019hamiltonian,berg2025crossing}. Details and our choice of algorithm will follow in Section~\ref{ch:track}. Examples of backtracking output in medical images are visualized in Fig.~\ref{fig:medical_images}.
% \begin{comment}
% and with  velocity
% given by
% \[
% \dot{\gamma}=(\dot{\bx},\dot{\bn})=
% \left(u^1 \bn, u^{3}\frac{\dot{\bn}}{\|\dot{\bn}\|}\right),
% \]
% where $\bn(t)=(\cos \theta (t), \sin \theta(t))$, and where the spatial velocity control $t \mapsto u^1(t)$ is given by \eqref{controlu1} and with angular velocity control is defined by $t \mapsto u^{3}(t)=\dot{\theta}(t)$.
% \end{comment}

\begin{remark}[Choice of cost function]
We rely either on the cost function $\mathcal{C}\neq 1$, given in \cite{BekkersSIAM,duitsmeestersmirebeauportegies,BekkersGSI} that we provide in Section~\ref{Section: connected-component-informed cost function} in Eq.~(\ref{Eq:CostFunction}), or a connected-component-informed cost function $\mathcal{C}_{\mathrm{new}}$, that we provide in Section~\ref{Section: connected-component-informed cost function} in Eq.~(\ref{CNEW}).
\end{remark}

\begin{definition}[Sub-Riemannian distance $d_{\cF_0}$]\label{Def: sub-Riemannian distance}
    The standard \emph{symmetric} sub-Riemannian distance \cite{BekkersSIAM,BoscainESAIM,citti2006cortical,Petitot2017Neurogeometry,bellaard2023analysis}
        is denoted by $d_{\cF_0}$ and equals:
    \begin{equation} \label{Eq: finsler distance}
    \begin{array}{rl}
    d_{\cF_0}(\bp_0,\bp_1) &= \inf \limits_{\gamma\in\Gamma_0(\bp_0, \bp_1)}
    L(\gamma)\ ,
    \end{array}
    \end{equation}
    with length $L(\gamma) =\int_0^1\cF_0(\gamma(t),\dot{\gamma}(t)) \mathrm{d}t$.
\end{definition}
%A quick solution to regain symmetry is to take the minimum over $d_{\cF_0^+}(\bp_0,\bp_1)$ and $d_{\cF_0^+}(\bp_1,\bp_0)$, but this will have serious consequences as we will see later.

\begin{remark}[Symmetry $d_{\cF_0}$ and asymmetry $d_{\cF_0^+}$]  
    \label{rem:as} 
    The distance $d_{\cF_0}$ is symmetric because of the following property: if $\gamma \in \Gamma_0(\bp_0,\bp_1)$ then $\gamma^\text{rev}(t)=\gamma(1-t) \in \Gamma_0(\bp_1,\bp_0)$, 
    with equal length $L(\gamma)=L(\gamma^\text{rev})$. 
    The distance $d_{\cF_0^+}$ is clearly asymmetric because of the positive control constraint in $\Gamma_0^+$. Consider for example the points $\bp_0=((0,0),(1,0))$ and $\bp_1=((1,0),(1,0))$). 
    Indeed the symmetry argument for $d_{\mathcal{F}_0}$ %(reversing the curve) 
    no longer applies to model  $d_{\mathcal{F}_0^+}$.
    % $d_{\cF^{+}_0}$ is \emph{not} %necessarily symmetric as $\Gamma_0^+$ does not have this property     
    %Indeed, it is not difficult to find examples where $d_{\cF^{+}_0}(\bp_0,\bp_1) \neq d_{\cF^{+}_0}(\bp_1,\bp_0)$.
\end{remark} 

Despite its  asymmetry, the model $d_{\mathcal{F}_0^+}$ does admit an antipodal symmetry:

%Once the model $d_{\cF_0}$ and $d_{\cF_0^+}$ is set the minimal curves are computed as follows.
%including the Finsler function $\cF_0$ as given in Eq.~{\eqref{Eq: finsler function}}.

%distances have %an antipodal %symmetry %property.
\begin{lemma}[Antipodal symmetry]\label{Lemma: antipodal symmetry}
Let the cost function $\mathcal{C}:\R^2 \times S^1 \to [\delta,\infty)$ be well-defined on $\R^2\times P^1$, i.e. $\mathcal{C}(\bp)=\mathcal{C}(\overline{\bp})$ for all $\bp \in \R^2 \times S^1$.
Then one has (antipodal) symmetries: 
%for $d_{\cF_0}$ and $d_{\cF_0^+}$:
    \begin{equation*}        
    \begin{array}{ll}
        d_{\cF_0^+}(\bp_0,\bp_1) &=d_{\cF_0^+}(\overline{\bp}_1,\overline{\bp}_0), \\[5pt] d_{\cF_0}(\bp_0,\bp_1) &=d_{\cF_0}(\overline{\bp}_1,\overline{\bp}_0),     
    \end{array}
    \end{equation*}
    for all $\bp_0,\bp_1 \in \R^2 \times S^1$. 
\end{lemma}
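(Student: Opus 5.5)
The plan is to construct an explicit length-preserving bijection between the candidate curve sets that realize the two distances. Given $\gamma=(\bx,\bn)\in\Gamma_0^+(\bp_0,\bp_1)$, define the \emph{antipodal reversed} curve
\[
\tilde\gamma(t):=\overline{\gamma(1-t)}=\bigl(\bx(1-t),-\bn(1-t)\bigr),\qquad t\in[0,1].
\]
Then $\tilde\gamma(0)=\overline{\bp}_1$ and $\tilde\gamma(1)=\overline{\bp}_0$, and $\tilde\gamma$ is piecewise $C^1$. The map $\gamma\mapsto\tilde\gamma$ is an involution, since applying it twice returns $\gamma$. Hence it suffices to show three things: it maps $\Gamma_0^+(\bp_0,\bp_1)$ into $\Gamma_0^+(\overline{\bp}_1,\overline{\bp}_0)$, it maps $\Gamma_0(\bp_0,\bp_1)$ into $\Gamma_0(\overline{\bp}_1,\overline{\bp}_0)$, and it preserves the length $L$. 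Taking infima on both sides then gives the two equalities. The reverse inequality comes from applying the same map to curves from $\overline{\bp}_1$ to $\overline{\bp}_0$, using $\overline{\overline{\bp}}=\bp$.

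The first step is the computation of the controls of $\tilde\gamma$. Writing $\tilde\bx(t)=\bx(1-t)$ and $\tilde\bn(t)=-\bn(1-t)$, we get $\dot{\tilde\bx}(t)=-\dot\bx(1-t)=-u^1(1-t)\bn(1-t)=u^1(1-t)\tilde\bn(t)$. So horizontality is preserved, and the spatial control is
\[
\tilde u^1(t)=u^1(1-t).
\]
In particular, $\tilde u^1\ge0$ almost everywhere whenever $u^1\ge0$ almost everywhere, so $\Gamma_0^+$ is preserved. This sign bookkeeping is exactly where the antipodal flip compensates for the time reversal. Plain reversal alone would flip the sign of $u^1$, which is why $d_{\cF_0^+}$ is not symmetric (Remark~\ref{rem:as}).

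For the angular part, $\tilde\theta(t)=\theta(1-t)+\pi$, so $\tilde u^3(t)=-u^3(1-t)$ and $|\tilde u^3(t)|=|u^3(1-t)|$.

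Finally, the hypothesis $\mathcal{C}(\bp)=\mathcal{C}(\overline{\bp})$ gives $\mathcal{C}(\tilde\gamma(t))=\mathcal{C}(\gamma(1-t))$. Therefore
\[
\cF_0(\tilde\gamma(t),\dot{\tilde\gamma}(t))=\cF_0(\gamma(1-t),\dot\gamma(1-t)),
\]
and the substitution $s=1-t$ yields $L(\tilde\gamma)=L(\gamma)$.

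I expect no real obstacle. The only care needed is the sign check showing that $\tilde u^1=u^1\circ(1-\cdot)$ rather than its negative, together with the observation that piecewise $C^1$ regularity and the a.e. constraints are invariant under the affine reparametrization $t\mapsto1-t$.
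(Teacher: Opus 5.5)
Your proposal is correct and is essentially the paper's proof. The paper uses the same antipodal-reversed curve $\hat\gamma(t)=\overline{\gamma}(1-t)$, computes $\overline{u}^1=-u^1$ and $\overline{u}^3=u^3$, and then uses $\mathcal{C}(\bp)=\mathcal{C}(\overline{\bp})$ together with the substitution $\tau=1-t$ to obtain a length-preserving bijection between the two curve sets; your explicit check that $\tilde u^1(t)=u^1(1-t)\geq 0$, so that $\Gamma_0^+$ is preserved, is a point the paper leaves implicit in the definition of $\cF_0^+$, and it makes the argument cleaner.
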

\begin{proof}
See Appendix~\ref{app:MCP}.
%Essentially, one both reverses and takes the antipodal-points over the curves over which is optimized.  
\end{proof}

\subsection{New Cusp-Free and Symmetric Model \texorpdfstring{\unboldmath $d_{\mathrm{c}}$}{dc} on \texorpdfstring{\unboldmath $\R^2\times P^1$}{R2 x P1}}\label{ch:dc}

We propose the following symmetric cusp-free model \emph{on the projective line bundle} $\R^2\times P^1$. We do this
by selecting the minimum distance $d_{\cF_0^+}$ over the equivalence classes:

\begin{equation}\label{dmon}
% \label{Eq:dc_newnotation}
\boxed{
d_{\mathrm{c}}([\bp_0], [\bp_1]) := 
\min_{\substack{
    \bq_0 \in [\bp_0]  \\
    \bq_1 \in [\bp_1] 
}} 
\{d_{\cF^{+}_0}(\bq_0, \bq_1)\}.}
%d_{\cF^{+}_0}(\bq_1, \bq_0)\}.
\end{equation}
The new model $d_{\mathrm{c}}$ is well-defined on $\R^2\times P^1$. 
% The model $d_{\mathrm{c}}$ includes the new cost function $\mathcal{C}_{\mathrm{new}}$ instead of $\mathcal{C}$ in (\ref{Eq: finsler function}) that we will explain in detail later in Section~\ref{Section: connected-component-informed cost function}.

The model $d_{\mathrm{c}}$ is \emph{cusp-free} (i.e. spatial projections of geodesics will not exhibit cusps) as it selects the minimizer over 4 cusp-free geodesics. 
Recall that we constrain $u^1\geq 0$ in model $d_{\mathcal{F}_0^+}$ via $\Gamma_0^+$ (Def.~\ref{def:Gamma}), ensuring that cusps (\ref{eq:cusp}) cannot occur. 

The model $d_{\mathrm{c}}$ is \emph{symmetric}. 
The symmetry is not straightforward as $d_{\mathcal{F}_0^+}$ is asymmetric.  
To this end, we note that 
in \eqref{dmon} one could have taken the minimum over 
$\{d_{\cF^{+}_0}(\bq_0, \bq_1),d_{\cF^{+}_0}(\bq_1, \bq_0)\}$, covering all 8 instances of \eqref{eq:8 pairs}. 
The distance $d_{\cF_0^+}$ is asymmetric, but it does satisfy $d_{\cF_0^+}(\bq_0,\bq_1)=d_{\cF_0^+}(\overline{\bq}_1,\overline{\bq}_0)$ by Lemma~\ref{Lemma: antipodal symmetry}. 
Thus, $d_{\mathrm{c}}$ is symmetric, and the more efficient definition \eqref{dmon}, minimizing over only 4 instances, suffices.

In the formal analysis of $d_{\mathrm{c}}$ in Section~\ref{sec:cuspfreemodel}, we will see that the minimization over the 4 instances has major impact on properties of the symmetric model $d_{\mathrm{c}}$ (e.g. development spheres, Maxwell sets etc.) which are clearly different from the 
properties of the previous model $d_{\mathrm{proj}}$ \cite{BekkersGSI}, and very different from the asymmetric model $d_{\mathcal{F}_0^+}$ \cite[Fig. 9C]{duitsmeestersmirebeauportegies}. 
%More concretely one has:sub %\cite{du*its2014association,duitsmeestersmirebeauportegies}.
%\begin{equation}\label{dmon}
%    \begin{array}{l}
%        d_{\mathrm{c}}([\bp_0], [\bp_1]) = 
%       \min\left\{ 
%        d_{\cF^+_0}(\bp_0, \bp_1),\right.\\
%        \left.
%        d_{\cF^+_0}(\overline{\bp}_0, \bp_1),
%        d_{\cF^+_0}(\bp_0, \overline{\bp}_1),
%        d_{\cF^+_0}(\overline{\bp}_0, %\overline{\bp}_1) 
%        \right\}. 
%    \end{array}
%\end{equation}

\subsubsection{Minimal Lifted Contours}\label{ch:minimalcontours}
% The problem $d_{\cF_0^+}$ \cite{duits2014association,duitsmeestersmirebeauportegies} aims to find a minimal \emph{curve} on $\R^2\times S^1$.
In model $d_{\mathrm{c}}$ we find a geodesic $\gamma$ in $\R^2\times S^1$ (connecting the minimal pair in \eqref{eq:8 pairs}). 
We will prove in Prop.~\ref{Corollary: sign-semidefinite distance} (Section~\ref{sec:cuspfreemodel}) that this curve has a spatial control $u^1$ given by Def.~\ref{def:controlu1}, which is non-negative or non-positive everywhere. 
Then $\gamma=(\ul{x},\ul{n})$ with $\dot{\bx} \in \mathrm{span}\{\bn\}$ is \emph{either} the lifted curve of its spatial projection $\ul{x}(\cdot)$ \emph{or} the lifted curve of its time reversed spatial projection. We call such a curve $\gamma$ a \emph{lifted contour}, because of this directional property.
The c in $d_{\mathrm{c}}$ stands for `contour'.  

Thus, intuitively the model $d_{\mathrm{c}}$ selects the lifted contour with minimal length. 
Later we will provide a formal mathematical justification of this intuition %(and the symmetry of $d_{\mathrm{c}}$) 
in the analysis of Section~\ref{sec:cuspfreemodel} (in Prop.~\ref{Corollary: sign-semidefinite distance}).

\subsection{Model \texorpdfstring{\unboldmath $d_{\mathrm{c}}$}{dc} vs. Model \texorpdfstring{\unboldmath $d_{\mathrm{proj}}$}{dproj}}\label{sec:dproj}

The cusp free model $d_{\mathrm{c}}$ in \eqref{dmon} 
differs significantly from the model $d_{\mathrm{proj}}$ suggested in \citet{BekkersGSI} on %the projective line bundle 
$\R^2 \times P^1$, which is based on the distance

\begin{equation}\label{Eq:dproj_newnotation}
d_{\mathrm{proj}}([\bp_0], [\bp_1]) := 
\min_{\substack{
    \bq_0 \in [\bp_0]  \\
    \bq_1 \in [\bp_1] 
}} 
\{d_{\cF_0}(\bq_0, \bq_1)\},
%,d_{\cF_0}(\bq_1, \bq_0)\},
\end{equation}
where cusps can still arise, cf.~Figs.~\ref{fig:medical_images}-\ref{fig:lat}. 
%Recall that one has to take the minimum over eight instances \eqref{eq:8 pairs}. %Due to the antipodal symmetry in $d_{\cF^+_0}$, $d_{\mathrm{c}}$ %\eqref{Eq:dc_newnotation} can be reduced to 4 instances, as will follow by %Lemma~\ref{Lemma: antipodal symmetry}:
%\begin{equation}\label{dmon}
%    \begin{array}{l}
%        d_{\mathrm{c}}([\bp_0], [\bp_1]) := 
%        \min\left\{ 
%        d_{\cF^+_0}(\bp_0, \bp_1),\right.\\
%        \left.
%        d_{\cF^+_0}(\overline{\bp}_0, \bp_1),
%        d_{\cF^+_0}(\bp_0, \overline{\bp}_1),
%        d_{\cF^+_0}(\overline{\bp}_0, \overline{\bp}_1) 
%        \right\}. 
%    \end{array}
%\end{equation}
Since $d_{\cF_0}(\bq_0,\bq_1)=d_{\cF_0}(\bq_1,\bq_0)
%$ and $d_{\cF_0}%(\bq_0,\bq_1)
=d_{\cF_0}(\overline{\bq}_1,\overline{\bq}_0)$, as shown in Lemma~\ref{Lemma: antipodal symmetry}, the distance model 
$d_{\mathrm{proj}}$  can be expressed by taking a minimum over 2 instances:
% , as shown in~\cite[Eq.~4]{BekkersGSI}
\begin{equation}
\label{dproj}
\boxed{
\begin{array}{l}
d_{\mathrm{proj}}([\bp_0],[\bp_1]]) = \\[5pt]
\min\left\{d_{\cF_0}(\bp_0, \bp_1), d_{\cF_0}(
\bp_0, \overline{\bp}_1)\right\}
\end{array}
}
\end{equation} 
This is in contrast to $d_{\mathrm{c}}$ which uses the asymmetric distance $d_{\cF_0^+}$, and which required \emph{4 instances} (\ref{dmon}). 

Further analysis of the previous model $d_{\mathrm{proj}}$ and the new model $d_{\mathrm{c}}$, and computation of their geodesics, requires some mathematical background that we cover next.

\begin{comment}
\subsection{Model $d_{\mathrm{c}}$ yields Minimal Lifted Contours}\label{ch:minimalcontours}
% The problem $d_{\cF_0^+}$ \cite{duits2014association,duitsmeestersmirebeauportegies} aims to find a minimal \emph{curve} on $\R^2\times S^1$.
In model $d_{\mathrm{c}}$, which is well defined on $\R^2\times P^1$, we find a geodesic $\gamma$ in $\R^2\times S^1$ (connecting the minimal pair \eqref{eq:8 pairs}). 
We will prove in Prop.~\ref{Corollary: sign-semidefinite distance} (Section~\ref{sec:cuspfreemodel}) that this curve has a spatial control $u^1$ given by Def.~\ref{def:controlu1}, which is non-negative or non-positive everywhere. 
Then $\gamma=(\ul{x},\ul{n})$ is \emph{either} the lifted curve of its spatial projection $\gamma=(\ul{x}, \ul{n})$ with $\bx \in \mathrm{span}\{\bn\}$ \emph{or} the lifted curve of its time reversed spatial projection. We call such a curve $\gamma$ a \emph{lifted contour}, because of this directional property. 
The c in $d_{\mathrm{c}}$ stands for `contour'.  
Thus, intuitively the model $d_{\mathrm{c}}$ selects the lifted contour with minimal length. 

Later we will provide a formal justification of this intuition %(and the symmetry of $d_{\mathrm{c}}$) 
in the analysis of Section~\ref{sec:cuspfreemodel} (in Prop.~\ref{Corollary: sign-semidefinite distance}).
\end{comment}

\section{Mathematical Background}\label{section: preliminaries}
In this section, we address some general mathematical background needed for:
\begin{itemize}
\item All steps in Fig.~\ref{fig: flowchart} (Step 1: lifting, Step 2: connected components, Step 3:  switch criterion, Step 4a: geodesic tracking, Step 4b: spatial snakes).
\item Geometric foundation of new model $d_{\mathrm{c}}$ and previous model $d_{\mathrm{proj}}$ and their geodesics, using sub-Riemannian geometrical tools on \mbox{$\R^2 \times S^1$}.
\end{itemize}
We start off by introducing the space of positions and orientations $\R^2 \times S^1$, followed by the Lie group of roto-translations $\mathrm{SE}(2)$, which acts on $\R^2 \times S^1$.
In Section \ref{Sec: orientation score processing}, we elaborate on how to lift the data with the orientation score transform to $\R^2 \times S^1$. Lifting an image to this higher-dimensional space $\R^2 \times S^1$, allows one to appropriately deal with complex structures, such as crossing lines. 

The two subsequent sections explain (sub)-Riemannian and Finslerian distances, which are included in various steps of the snake model. 
Finally, we introduce the geodesic tracking algorithm in $\R^2 \times S^1$. 
% Finally, the concept of morphological dilation is explained, which is used for the grouping of the cost function. 

\subsection{Lifted Space of Positions and Orientations \texorpdfstring{\unboldmath $\R^2 \times S^1$}{R2 x S1}}

Let $\R^2 \times S^1$ be the homogeneous space of positions and orientations on which the Lie group $\SE(2)$ acts transitively.
Elements of the homogeneous space $\R^2 \times S^1$ will be denoted by $(\bx,\bn) \in \R^2 \times S^1$, where $\bx$ represents the position in $\R^2$ and $\bn=(\cos \theta, \sin \theta)$ represents the orientation in $S^1$. Let $\bp_0 = (\mathbf{0},(1,0))$ be our reference element in $\R^2 \times S^1$.

Let $\SE(2) = \R^2 \rtimes \SO(2)$ be the Lie group of roto-translations. 
The group elements $g$ of $\SE(2)$ are given by $(\bx,R_\theta)\in \R^2 \rtimes \SO(2)$, where $\bx \in \R^2 $ is the translation vector and where $R_\theta \in \SO(2)$ is the rotation matrix with the (counterclockwise) angle $\theta$. 
We will use the concise notation $(x,y,\theta)$ for $g$, due to the isomorphism $\SO(2) \cong \R/(2\pi\mathbb{Z})$.

The group product on $\SE(2)$ is defined by:
\begin{align*}
    g_1 g_2:= (\bx_1+R_{\theta_1}\bx_2,\ \theta_1 + \theta_2 \mod 2 \pi)
\end{align*}
for $g_1=(\bx_1,R_{\theta_1})$ and $g_2=(\bx_2,R_{\theta_2})\in \SE(2)$. The identity element is $e=(\ul{0},R_0)\in\SE(2)$.

For each $g = (\by,R_\phi) \in \SE(2)$ the left transitive action of roto-translations $L_g: \R^2 \times S^1 \rightarrow \R^2 \times S^1$ on the homogeneous space $\R^2 \times S^1$ is given by
\begin{equation} \label{groupaction}
L_g(\bx,\bn) := g \cdot (\bx,\bn) = (\by+R_\phi \bx, R_\phi\bn),
\end{equation}
with $(\bx,\bn) \in \R^2 \times S^1$.
The bijection between a position-orientation and a roto-translation becomes apparent if we apply a roto-translation $g=(\bx,R_\theta)$ to reference element $\bp_0$: 
\begin{align*}
L_g(\mathbf{0},(1,0))&:= (\bx,R_\theta)\cdot(\mathbf{0},(1,0)) \\
&= (\bx,(\cos \theta, \sin \theta)).
\end{align*}
Henceforth, we will identify $\R^2 \times S^1$ with $\SE(2)$ and we can use the same concise notation for element of $\R^2 \times S^1$, namely $(\bx,\bn) \equiv (x,y,\theta)$. 

The roto-translation group $\SE(2)$ also acts on functions on $\R^2$ via the representation $\mathcal{U}: \SE(2) \rightarrow B(\mathbb{L}_2(\R^2))$ given by
\begin{equation*}
(\mathcal{U}_g f)(\by) := f\left(R_\theta^{-1}(\by - \bx)\right).
\end{equation*} 

Similarly, the group $\SE(2)$ acts on functions \mbox{$U:\R^2 \times S^1 \to \mathbb{C}$} via the representation $\mathcal{L}: \SE(2) \rightarrow B(\mathbb{L}_2(\R^2 \times S^1))$ given by
\begin{equation} \label{eq:calL_representation}
\mathcal{L}_gU(h) := U(g^{-1} h), 
\end{equation} 
for all $h,g \in G$.
% This is a shift-twist transformation.

\subsection{Orientation Score Processing}\label{Sec: orientation score processing}
This section explains how the data is lifted to the space $\R^2\times S^1$ using the \emph{orientation score transform}, as visualized in Step 1 of Fig.~\ref{fig: flowchart}.

\begin{definition}[Orientation Score Transform]
    \label{def: orientation score}
     We define the invertible orientation score transform $W_\psi: \mathbb{L}_2(\R^2) \rightarrow \mathbb{L}_2( \R^2\times S^1)$, with an anisotropic wavelet $\psi: \mathbb{L}_1( \R^2) \cap \mathbb{L}_2( \R^2) $, by
    \begin{align}\label{eq:OS}
        W_\psi f(\bx,\theta) &:= \int_{\R^2} \overline{\psi\left(R_\theta^{-1}(\by-\bx)\right)}f(\by)\mathrm{d}\by 
    \end{align}
    We will use the shorthand notation $U_f := W_\psi f$.
\end{definition}

The orientation score is thus constructed by taking the convolution with a wavelet $\psi$ rotated counterclockwise over all orientations $\theta \in \R/(2\pi\mathbb{Z})$.
For the wavelet $\psi$ we used the cake wavelet, as proposed in \cite[Part I, Fig.~3]{DuitsAMS}.

\begin{remark}[Motivation usage of cake wavelets $\psi$]
    Under conditions on $\psi$ the orientation score transform $W_{\psi}$ is invertible, i.e. the image $f$ can be reconstructed in a stable way from the orientation score $W_{\psi} f$ \cite[Thm.~4]{Duits2007a}, \cite[Part I, App.~A]{DuitsAMS}. 
    Furthermore, $f$ can also be recovered approximately simply by summing or integrating over all orientations \cite[Eq.~21]{Duits2007a}. 
    Finally, cake wavelets produce a vanishing minimal %$SE(2)$-
    uncertainty gap \cite[Thm.~1]{Sherry2025}.
\end{remark}

The orientation score transform $W_\psi$ is \emph{equivariant}, which means that: 
\begin{equation} \label{intertwine}
W_\psi \circ \mathcal{U}_g = \mathcal{L}_g \circ W_\psi 
\end{equation} 
for all $g \in \SE(2)$. Thereby orientation score processing must be left-invariant (i.e. must commute with $\mathcal{L}$) for Euclidean equivariant image processing, see \cite[Thm.21]{DuitsRthesis}.

Such processing requires key ingredients such as
left-invariant %(Sub-)Riemannian 
metric tensor fields and distance models that we explain next.
% Equivariance is a desirable property: if we act on the input of an equivariant operator, its output is transformed accordingly. 
% In this example, roto-translation of the input image $f$, 
% yields roto-translation of the output orientation score $U_f$ by a shift-twist transformation.

% It can be shown that the operator in the image domain is left-invariant if and only if the corresponding operator on the invertible orientation score commutes with $\mathcal{L}$ \cite[Thm.21]{DuitsRthesis}. 

\subsection{Left-Invariant %(Sub-)Riemannian 
Metric Tensor Fields and Distance Models}

Next, we introduce a left-invariant frame, metric tensor field, and distance on $\R^2 \times S^1$ to ensure equivariant processing on the orientation score. 

\begin{definition}[Left-invariant vector fields]
    \label{def:left_invariant_vector_fields}
    %Let $\mathcal{A} \in \sections(T(\R^2 \times S^1))$, with $\sections(T(\R^2 \times S^1))$ the smooth sections of the tangent bundle $T(\R^2 \times S^1)$, be a vector field. 
    Vector field $\mathcal{A}$ on $\R^2 \times S^1$ is called \emph{left-invariant} if 
    \begin{equation*}\label{eq:left_invariant_vector_field}
    (L_g)_* \mathcal{A} \vert_{\bp} = \mathcal{A} \vert_{g \bp}
    \end{equation*}
    for all $g \in \SE(2)$ and $\bp \in \R^2 \times S^1$. The push-forward $(L_g)_*$ of $L_g$ given by
    \begin{equation*}\label{eq:push_forward}
    ((L_g)_* \mathcal{A} \vert_{\bp}) (U) := \mathcal{A} \vert_{\bp} (U \circ L_g), 
    \end{equation*}
    for all smooth functions $U: \R^2 \times S^1 \rightarrow \mathbb{C}$.
\end{definition}

\begin{comment}
One typically constructs left-invariant vector fields by pushing forward tangent vectors at the reference point $\bp_0$, e.g.
for any $A \in T_{\bp_0}(\R^2 \times S^1)$ we can define a left-invariant vector field %$\mathcal{A} \in \sections(T(\R^2 \times S^1))$ 
by
\begin{equation*}
\mathcal{A} \vert_{g \bp_0} := (L_g)_* A ,
\end{equation*}
with $g \in \SE(2)$. In particular, 
\end{comment}

We can construct the \emph{left-invariant frame} $\{\mathcal{A}_1, \mathcal{A}_2, \mathcal{A}_3\}$ by pushing forward the static coordinate frame $\{\partial_x, \partial_y, \partial_\theta\}$ at  reference point $\bp_0$:
\begin{equation}\label{eq:left_invariant_frame}
\begin{split}
    \mathcal{A}_1 \vert_{\bp}  &= \cos\theta\, \partial_x \vert_{\bp} + \sin\theta\,  \partial_y \vert_{\bp},\\ 
    \mathcal{A}_2 \vert_{\bp} &= -\sin\theta\,  \partial_x \vert_{\bp} + \cos\theta \, \partial_y \vert_{\bp},  \; \text{and} \\
    \mathcal{A}_3 \vert_{\bp} &= \partial_\theta \vert_{\bp},
\end{split}
\end{equation}
with $\bp = (x, y, \theta)$.
Accordingly, its dual frame $\{\omega^1, \omega^2, \omega^3\}$, defined by $\omega^i(\mathcal{A}_j)=\delta^i_j$, equals: 
\begin{align*}
    \omega^1 \vert_{\bp}  &= \cos\theta\,  \mathrm{d}x \vert_{\bp} + \sin \theta\, \mathrm{d}y \vert_{\bp},\\ 
    \omega^2 \vert_{\bp} &= -\sin \theta\,  \mathrm{d}x \vert_{\bp} + \cos \theta\, \mathrm{d}y \vert_{\bp},  \; \text{and} \\
    \omega^3 \vert_{\bp} &= \mathrm{d}\theta \vert_{\bp}.
\end{align*}

\begin{definition}[Left-invariant metric tensor field]
    Metric tensor field $\mathcal{G}$ on $\R^2 \times S^1$ is left-invariant iff 
    %and only if 
    \begin{equation*}    \mathcal{G}_{g\bp}\left((L_g)_*\dot{\bp},(L_g)_*\dot{\bp}\right) = \mathcal{G}_{\bp}\left(\dot{\bp},\dot{\bp}\right) 
    \end{equation*}
    for all $\bp\in\R^2 \times S^1, \dot{\bp} \in T_{\bp}(\R^2 \times S^1)$ and $g\in \SE(2)$.
\end{definition}

Let $\mathcal{G}$ be a left-invariant metric tensor field on $\R^2 \times S^1$. Then there exists a unique constant matrix $[g_{ij}] \in \R^{3 \times 3}$ such that
\begin{equation}\label{Eq: sub-Riemannian metric tensor}
    \mathcal{G} = \sum_{i,j=1}^3g_{ij}\; \omega^i \otimes \omega^j.
\end{equation}
We will restrict ourselves to diagonal matrices $g_{ij}=g_{ii}\delta^i_j$ with $g_{ii}>0$.
%The left-invariant 
A left-invariant metric tensor field $\mathcal{G}$ induces a left-invariant Riemannian metric $d_{\mathcal{G}}: (\R^2 \times S^1) \times (\R^2 \times S^1) \rightarrow \R_{\geq 0}$ given by
\begin{equation}\label{Eq: Riemannian distance}
    d_{\mathcal{G}}(\bp_0,\bp_1) = \hspace{-1em} \inf_{\gamma \in \Gamma(\bp_0,\bp_1)}\int_0^1\sqrt{\mathcal{G}_{\gamma(t)}(\dot{\gamma}(t),\dot{\gamma}(t))}\mathrm{d}t,
\end{equation}
where $\Gamma(\bp_0,\bp_1)$ is defined in Def.~\ref{def:Gamma}. 
% Note that, the distance $d_\mathcal{G}$ does not depend on the variable time variable $t$, meaning that we are allowed to re-parametrize every curve between $0$ and $1$.

\subsubsection{Sub-Riemannian Distance\texorpdfstring{ \unboldmath $d_{\cF_0}$}{}}
The distance $d_{\cF_0}$ \eqref{Eq: finsler distance} restricts geodesics to the set $\Gamma_0$ of horizontal curves, recall Def.~\ref{def:Gamma}. Such geodesics $\gamma(\cdot)=(\bx(\cdot),\bn(\cdot))$ satisfy $\dot{\bx} \in \textrm{span}\{\bn\}$.
%Recall, that these geodesics $t \mapsto \gamma(t)=(\bx(t),\bn(t))$ %should satisfy: $\dot{\bx}(t)= u^1(t)  \bn(t)$ with $u^1(t) \in \R$.

As shown in \cite[Thm.~2]{duitsmeestersmirebeauportegies}, the  distance $d_{\cF_0}$ is a limit of the Riemannian distance $d_{\mathcal{G}}$ \eqref{Eq: Riemannian distance} by setting $g_{11}=\xi^2$, $g_{22}\rightarrow \infty$, $g_{33}=1$, cf.~\cite[Thm.~2]{duitsmeestersmirebeauportegies}. By $g_{22} \rightarrow \infty$ `sideway motion' becomes forbidden. 

From \eqref{eq:left_invariant_frame} we can derive that horizontal curves satisfy $\dot{\gamma}(t)\in \operatorname{span}\{\left.\mathcal{A}_1\right|_{\gamma(t)}, \left.\mathcal{A}_3\right|_{\gamma(t)}\}$ for all $t\in [0,1]$.
The distance $d_{\cF_0}$ is indeed a sub-Riemannian distance in a well-posed geodesically complete sub-Riemannian manifold. 
%with a $2$-bracket generating distribution. 
% \begin{remark}[Relation between sub-Riemannian and symmetric Finslerian distance]
% \label{Remark: relation finsler sub riemannian}
%     The symmetric Finsler distance $d_{\cF_0}$ \eqref{Eq: finsler distance} equals the  sub-Riemannian distance arising from \eqref{Eq: Riemannian distance} when $g_{11}=\xi^2$, $g_{22}\rightarrow \infty$ and $g_{33}=1$.
% \end{remark}
\begin{remark}[Sub-Riemannian manifold]\label{rem:sub-riemannian manifold} 
    The sub-Riemannian manifold underlying the sub-Riemannian distance $d_{\cF_0}$ is given by 
    \begin{equation*}
        (\R^2 \times S^1, \Delta, \mathcal{G}|_{\Delta \times \Delta})
    \end{equation*}
    where $\Delta_{\bp}=\textrm{span}\{\left.\mathcal{A}_1\right|_{\bp},\left.\mathcal{A}_{3}\right|_{\bp} \}$. Each pair of points is connectable by a 
    sub-Riemannian 
    geodesic, as $\Delta$ with its Lie brackets generates the tangent space: \mbox{$\Delta+ [\Delta,\Delta] = T(\R^2 \times S^1)$.}
\end{remark}

The sub-Riemannian model $d_{\cF_0}$ can be considered as a Reeds-Shepp car \cite{BoscainESAIM,duitsmeestersmirebeauportegies}. Smooth minimizing geodesics in $\R^2 \times S^1$ can exhibit cusps in their spatial projections, as it is sometimes cheaper to use of the reverse gear of the car, as can be seen in Fig.~\ref{fig:previous_tracking}.

\subsection{Geodesic Tracking 
%in \texorpdfstring{\unboldmath $\R^2 \times S^1$}{R2 X S1} 
by Steepest Descent on %Finslerian 
Distance~Map \label{ch:track}}

Geodesic contour modeling is a two-step procedure; first, the distance map $d_{\cF_0}$ or $d_{\cF_0^+}$ is computed as a viscosity solution of the eikonal PDE and secondly, the minimizing geodesic $\gamma$ from $\bp_0$ to $\bp_1$ is computed by a backtracking algorithm from $\bp_1$. 
%This section presents an iterative method for solving the eikonal PDE on $\R^2 \times S^1$, together with a %n 
%backtracking algorithm. 

A Finslerian distance map $W(\bp):=d_{\cF}(\bp,\bp_0)$ is the viscosity solution of eikonal PDE system:
\begin{equation}\label{eq:eikonal_pde}
    \begin{array}{l}           
    \cF^*(\bp, {\rm d} W(\bp)) = 1, \text{for } \bp \neq \bp_0, \\[5pt]
    W(\bp_0) = 0,
    \end{array}
\end{equation}
with dual Finsler function $\cF^*$ defined by \eqref{eq:dual_finsler_function}.
Now for our algorithms, we either need to apply this to the Finsler function $\cF=\cF_0$ \eqref{Eq: finsler function} or $\cF=\cF_0^+$ given by 
\begin{equation}\label{F0plus}
    \cF_0^+(\bp,\dot{\bp})=
    \begin{cases}
    \cF_0(\bp, \dot{\bp}) & \textrm{ if } \dot{\bx} = u^1 \bn, u^1 \geq 0 \\
    \infty &\textrm{else}.
    \end{cases}
\end{equation}
%\begin{remark}
%For technical details on dual Finsler functions and our (numeric) %approximation of the eikonal PDE system, see Appendix~\ref{app:B}. 
%\end{remark}
In this section, we provide the eikonal PDE system and the backtracking for the relevant cases $\cF=\cF_0$ and $\cF=\cF_0^+$. 
For the converging numerical scheme for computing the distance map, see Appendix~\ref{app:B}.
The eikonal PDE system for $\cF=\cF_0$ is:
\begin{equation}
    \begin{array}{l}           
    %\|\nabla W(\bp)\|= 
    \frac{1}{\mathcal{C}(\bp)}\sqrt{
    \xi^{-2} |\mathcal{A}_1W(\bp)|^2 +
    |\mathcal{A}_{3}W(\bp)|^2
    }= 1, \\
    \text{for } \bp \neq \bp_0,\\
    W(\bp_0) = 0,
    \end{array}   
\end{equation}
where we recall the left-invariant frame \eqref{eq:left_invariant_frame}. 

The eikonal PDE system for $\cF=\cF_0^{+}$ is:
\begin{equation} \label{eikF0plus}
    \begin{array}{l}
    \frac{1}{\mathcal{C}(\bp)}\sqrt{\xi^{-2} |(\mathcal{A}_1W(\bp))_+|^2 +|\mathcal{A}_{3}W(\bp)|^2}  = 1, \\
    \text{for } \bp \neq \bp_0, \\
    W(\bp_0) = 0,
    \end{array}
\end{equation}
where we use the notation $(a)_+=\max\{a,0\}$.

% BENEDEN NAAR APP B
%R\begin{equation}\label{F0plusb}
%\cF_0^+%(\bp,\dot{\bp})=
%\begin{cases}
%\cF_0(\bp, \dot{\bp}) & %\textrm{ if } \dot{\bx} / %\|\dot{\bx}\| = \bn, \\
%\infty &\textrm{else},
%\end{cases}
%\end{equation}
The sub-Riemannian geodesic in $d_{\cF_0}$ is computed by steepest descent backtracking:
\begin{equation} \label{eq:backtracking}
    \begin{array}{rrrl}
        \dot{\gamma}(t)=& -\frac{L}{\mathcal{C}^2(\gamma(t))} \big(
        \frac{1}{\xi^2}&\mathcal{A}_1 W(\gamma(t))&\mathcal{A}_1|_{\gamma(t)}\\
        & +&\mathcal{A}_{3}W(\gamma(t))&\mathcal{A}_3|_{\gamma(t)}\big), \\
        \gamma(0) =&\bp_1,
        \gamma(1) =\bp_0,
    \end{array}
\end{equation}
with $L:=d_{\cF_0}(\bp_0,\bp_1)$ and $t \in [0,1]$. 
The sub-Riemannian geodesic with +-control in $d_{\cF_0^+}$ is computed by steepest descent backtracking:
\begin{comment}
\begin{align} \label{eq:backtracking}
    \begin{array}{l}
            \dot{\gamma}(t)= -L {\rm d}_{\hat{\bp}}(\cF_0^+)^{*}(\gamma(t), {\rm d}W(\gamma(t))) \\
            \gamma(0)=\bp_1, \gamma(1)=\bp_0 
    \end{array}     
\end{align}
for all $t \in [0,1]$, with $L:=d_{\cF_0^+}(\bp_0,\bp_1)$ and ${\rm d}_{\hat{\bp}}(\cF_0^{+})^{*}$ denotes the differential of the dual
Finsler function $(\cF_0^+)^*$ with respect to the second variable
$\hat{\bp}$, cf. \cite[Eq.~23]{duitsmeestersmirebeauportegies}).
More concretely \eqref{eq:backtracking} can be expressed in the left-invariant basis as
\end{comment}

\begin{equation} \label{eq:backtrackingplus}
    \begin{array}{rrl}
        \dot{\gamma}(t) =& - \frac{L}{\mathcal{C}^2(\gamma(t))}\Big(\frac{1}{\xi^2}& \big( \mathcal{A}_1 W(\gamma(t)) \big)_+\mathcal{A}_1|_{\gamma(t)}\\
    	&+&\mathcal{A}_{3}W(\gamma(t))\mathcal{A}_3|_{\gamma(t)}\Big),\\
        \gamma(0)=& \bp_1, \gamma(1) =\bp_0,      
    \end{array}
\end{equation}
    with $L:=d_{\cF_0^+}(\bp_0,\bp_1)$ and $t \in [0,1]$.
%\end{align}

These algorithms show us how to compute the geodesics of the models $d_{\cF_0}$ and $d_{\cF_0^+}$, needed for the models $d_{\mathrm{c}}$ \eqref{dmon} and $d_{\mathrm{proj}}$ \eqref{dproj} which we will analyze and compare next.

\section{Analysis of the Symmetric Distance Model on \texorpdfstring{\unboldmath $\R^2 \times P^1$}{R2 x P1}}\label{sec:cuspfreemodel}
In this section, we analyze the new cusp-free, symmetric distance model on the projective line bundle $\R^2\times P^1$.

Intuitively, a cusp occurs when the optimal Reeds-Shepp car (traveling over a geodesic) switches gear (from forward to backward or vice versa). 
Recall the definition of a cusp in Def.~\ref{def:cusp}.
The proposed model, denoted by $d_{\mathrm{c}}$, restricts the Reeds-Shepp car to 
%only use a single gear; either 
either move
forward or to move backward, as we will see in Prop.~\ref{Corollary: sign-semidefinite distance} below. %show in the next proposition. %(Eq.~\eqref{Eq: finsler distance} with constraints $u^1 \geq 0$ and $u^1 \leq 0$, respectively).

% The minimizing geodesic of $d_{\mathrm{c}}$ can be directed in either direction. 
% This property is particularly desirable for contour modeling. We therefore refer to our model as $d_{\mathrm{c}}$, with ``c'' standing for contour.  

\begin{proposition}[Cusp-free model \texorpdfstring{$d_{\mathrm{c}}$ on $\R^2\times P^1$}{dc on R2 x P1}]\label{Corollary: sign-semidefinite distance}
Let the cost function be well-defined on $\R^2\times P^1$, i.e. $\mathcal{C}(\bp)=\mathcal{C}(\overline{\bp})$ for all $\bp \in \R^2 \times S^1$.
The model $d_{\mathrm{c}}: (\R^2 \times P^1) \times (\R^2 \times P^1) \to \R_{\geq 0}$ \eqref{dmon} is symmetric and can be expressed as the %following 
curve optimization problem:
\begin{equation} \label{FD}
    d_{\mathrm{c}}([\bp_0],[\bp_1]) = 
    \inf_{\gamma \in \Gamma^c([\bp_0],[\bp_1])} L(\gamma),
\end{equation}
with length $L(\gamma)=\int_0^1\cF_0(\gamma(t),\dot{\gamma}(t))  {\rm d}t$, and 
where $\Gamma^c([\bp_0],[\bp_1])$ is the set of all piecewise $C^1$-curves $\gamma : [0,1] \to \R^2 \times S^1$ with $\gamma(0) \in [\bp_0]$, $\gamma(1) \in [\bp_1]$, $\dot{\bx} = u^1 \bn$, and $u^1$ having a constant sign, i.e., it is either nonnegative or nonpositive everywhere. 

% By this `fixing of the gear of the Reeds-Shepp car', model $d_{\mathrm{c}}$ only produces minimizing geodesics whose spatial projections do not exhibit cusps.
\end{proposition}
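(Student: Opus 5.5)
Symmetry follows from Lemma~\ref{Lemma: antipodal symmetry}. For any $\bq_0\in[\bp_0]$ and $\bq_1\in[\bp_1]$ we have $d_{\cF_0^+}(\bq_0,\bq_1)=d_{\cF_0^+}(\overline{\bq}_1,\overline{\bq}_0)$. The map $(\bq_0,\bq_1)\mapsto(\overline{\bq}_1,\overline{\bq}_0)$ is a bijection from $[\bp_0]\times[\bp_1]$ onto $[\bp_1]\times[\bp_0]$. Hence the minimum in \eqref{dmon} over the four pairs for $([\bp_0],[\bp_1])$ equals the minimum over the four pairs for $([\bp_1],[\bp_0])$, that is, $d_{\mathrm{c}}([\bp_0],[\bp_1])=d_{\mathrm{c}}([\bp_1],[\bp_0])$.

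For the curve formulation \eqref{FD}, split $\Gamma^c([\bp_0],[\bp_1])=\Gamma^{c,+}\cup\Gamma^{c,-}$ according to the sign of $u^1$. By definition, $\Gamma^{c,+}=\bigcup_{\bq_0\in[\bp_0],\bq_1\in[\bp_1]}\Gamma_0^+(\bq_0,\bq_1)$. The infimum of $L$ over $\Gamma^{c,+}$ is therefore exactly the minimum in \eqref{dmon}, because the Finsler function $\cF_0$ in \eqref{Eq: finsler distance forward} is the same as in $L$.

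For $\Gamma^{c,-}$, I use the antipodal reversal map of the kind underlying Lemma~\ref{Lemma: antipodal symmetry}. Given $\gamma=(\bx,\bn)\in\Gamma^{c,-}$ with $\gamma(0)=\bq_0$ and $\gamma(1)=\bq_1$, define $\tilde\gamma(t):=(\bx(1-t),-\bn(1-t))$. Then $\dot{\tilde{\bx}}(t)=-\dot\bx(1-t)=-u^1(1-t)\bn(1-t)=u^1(1-t)\,\tilde\bn(t)$, so the new spatial control is $\tilde u^1(t)=u^1(1-t)$. This is not yet what we want, since the sign has not flipped. I should instead keep the orientation and only flip the sign: take $\hat\gamma(t):=(\bx(t),-\bn(t))$. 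Then $\dot\bx=u^1\bn=(-u^1)(-\bn)$, so $\hat u^1=-u^1\ge 0$. The angular control is unchanged, since $\theta\mapsto\theta+\pi$. The cost is unchanged because $\mathcal{C}(\bp)=\mathcal{C}(\overline{\bp})$. The endpoints become $\overline{\bq}_0\in[\bp_0]$ and $\overline{\bq}_1\in[\bp_1]$. Thus $\gamma\mapsto\hat\gamma$ is a length-preserving bijection $\Gamma^{c,-}\to\Gamma^{c,+}$, and the two infima coincide.

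Combining the two cases gives $\inf_{\Gamma^c}L=\min\{\inf_{\Gamma^{c,+}}L,\inf_{\Gamma^{c,-}}L\}=d_{\mathrm{c}}([\bp_0],[\bp_1])$. The only delicate point is checking that the flip preserves piecewise $C^1$ regularity and the horizontality constraint almost everywhere. This is immediate from Def.~\ref{def:controlu1}, so I expect no real obstacle beyond careful bookkeeping of the equivalence classes. A remark that these curves have no cusps follows, since $u^1$ has no zero-crossing with a sign change.
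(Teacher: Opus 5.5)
Your proposal is correct and follows essentially the same route as the paper. The paper also splits $\Gamma^c$ into the $u^1\geq 0$ and $u^1\leq 0$ parts. It uses the length-preserving flip $\gamma\mapsto\overline{\gamma}$ (valid since $\mathcal{C}(\bp)=\mathcal{C}(\overline{\bp})$) to identify $d_{\cF_0^-}(\overline{\bp}_0,\overline{\bp}_1)$ with $d_{\cF_0^+}(\bp_0,\bp_1)$, which reduces the eight instances to the four in \eqref{dmon}, and it takes symmetry from Lemma~\ref{Lemma: antipodal symmetry}. The only difference is packaging: you state the flip as a bijection $\Gamma^{c,-}\to\Gamma^{c,+}$ on curves with free endpoints in the classes instead of writing out the eight-term minimum, and you were right to discard $t\mapsto(\bx(1-t),-\bn(1-t))$, which preserves rather than flips the sign of $u^1$.
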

\begin{proof}
See Appendix \ref{app:MCP}.
Essentially, imposing a forward gear ($\textrm{sign}(u^1)\geq 0$) on the Reeds-Shepp car along the curve $\gamma$ on $\R^2\times S^1$, connecting $\bp_0$ and $\bp_1$, is the same as imposing a backward gear ($\textrm{sign}(u^1)\leq 0$) on the Reeds-Shepp car along the curve $\overline{\gamma}$, connecting $\overline{\bp}_0$ and $\overline{\bp}_1$. Then symmetry follows by Lemma~\ref{Lemma: antipodal symmetry}.
\end{proof}
\begin{remark}[Minimal lifted contour]
The set $\Gamma^{c}([\ul{p}_0],[\ul{p}_1])$ is the set of lifted contours (recall Section~\ref{ch:minimalcontours}), and by Prop.~\ref{Corollary: sign-semidefinite distance} our model $d_{\mathrm{c}}$ selects the lifted contour(s) with minimal %(sub-Riemannian) 
length.
\end{remark}

%\end{remark}
Recall that the new model $d_{\mathrm{c}}$ and the %previous 
model $d_{\cF_0^+}$ are related by \eqref{dmon}. Both models are 
cusp-free, but can have other remarkable points: `keypoints' as we explain next.  
\begin{definition}[Keypoints]\label{def: keypoint}
A keypoint is a point $\tilde{\bx}$ on the spatial projection of a geodesic $t \mapsto \gamma(t)=(\bx(t),\bn(t))$, where
    \begin{equation*}
        \exists 0 \leq t_0 < t_1 \leq 1, \forall t \in [t_0,t_1]: \bx(t) = \tilde{\bx}, \; \dot{\bn}(t) \neq 0.
    \end{equation*}
\end{definition}
The purely angular motion of a geodesic $\gamma$ at a keypoint is referred to as an \emph{in-place rotation}.

\begin{remark}[Keypoints at start and end for $\mathcal{C}=1$] 
% The models of $d_{\cF_0}$ and $d_{\mathrm{proj}}$ may give rise to cusps.
The model $d_{\cF_0^+}$ and $d_{\mathrm{c}}$ does not have cusps, but its produced geodesics usually have keypoints (Def.~\ref{def: keypoint}) at the boundaries \cite[Thm.~3]{duitsmeestersmirebeauportegies} for $\mathcal{C}=1$ \eqref{Eq: finsler function}. 
Essentially, in $d_{\cF_0^+}$ one rotates the spherical part at the boundaries in a minimal way, so that the rotated boundary conditions can be connected with a cusp-free sub-Riemannian geodesic. This is illustrated by the green geodesics of Fig.~\ref{fig:lat}. 
\end{remark}

% \begin{remark}[Allowing for corners and bifurcations] 
% In \cite[Fig. 14]{duitsmeestersmirebeauportegies} it is %experimentally 
% shown that keypoints are useful for the tracking of blood vessels, as they %typically 
% occur at bifurcations for the model $d_{\cF_0^+}$ \eqref{Eq: finsler distance forward} %when a %nontrivial 
% %cost function 
% when using data-driven cost $\mathcal{C}$ in \eqref{Eq: finsler function}. 
% The ability of our $d_{\mathrm{c}}$ model to handle corners (by  key points) in geodesic tracking is also beneficial for SEM applications%(Fig.~\ref{fig:introexpgen})
%   where electronic structures may have corners.
% \end{remark}

In the next subsection, we will analyze similarities and differences in the behavior of the geodesics of the new model $d_{\mathrm{c}}$ and the previous model $d_{\mathrm{proj}}$ \cite{BekkersGSI} on the projective bundle $\R^2\times P^1$.

\subsection{Analysis of \texorpdfstring{\unboldmath $d_{\mathrm{c}}$ for $\mathcal{C}=1$}{dc for C=1}}\label{section: analysis dmon}

In the analysis of $d_{\mathrm{c}}([\bp_0],[\bp_1])$, we set our cost function to be constant, say $\mathcal{C}=1$, so that the Finsler function $\cF_0$, \eqref{Eq: finsler function} depends solely on the bending stiffness parameter $\xi>0$. 

We will need to distinguish between points $[\bp_1] \in \R^2\times P^1$ that lie in a specific cone field (relative to $[\bp_0]$) and points outside of this cone field. 
Next, we specify this relevant cone field $\tilde{\mathcal{Q}}_\xi$, depicted on a discrete grid in Fig.~\ref{fig: set Q}.

\begin{definition}[Cusp-free sets]\label{Def: Cusp-free sets}
The set of end points in $\R^2 \times P^1$ reachable by cusp-free minimizing geodesics of $d_{\mathrm{proj}}([\bp_0],\cdot)$ equals
\begin{align}\label{Eq: Tilde Qxi}
\begin{split}
    \tilde{\mathcal{Q}}_\xi:=\big\{ &[\bp_1] \in \R^2 \times P^1 \mid \\
    &\exists \text{ cusp-free 
    \emph{minimizing} geodesic  }\gamma_{\mathrm{min}}  \\
    &\text{of } d_{\mathrm{proj}} 
     \textrm{ that connects a pair of }\\
     &\text{elements from } [\bp_0]=\{\bp_0,\overline{\bp}_0\}
    \\
    & \textrm{and }[\bp_1]=
    \{\bp_1,\overline{\bp}_1\} \big\}.
\end{split}
\end{align}
The set of end points in $\R^2 \times S^1$ reachable by cusp-free minimizing geodesics of $d_{\cF_0}(\bp_0, \cdot)$ is given by
\begin{align}\label{Eq: Rxi}
\begin{split}
    \mathcal{R}_\xi:=\big\{ &\bp_1 \in \R^2 \times S^1 \mid \\
    &\exists \text{ cusp-free }\gamma_{\mathrm{min}} \in \Gamma_0(\bp_0, \bp_1) \\
    & \text{that is a minimizer of } d_{\cF_0}\big\}.
\end{split}
\end{align}
Finally, the set of end points in $\R^2 \times P^1$ that can be reached with a (not necessarily minimizing) cusp-free geodesic of the model $ d_{\mathrm{proj}}$ is given by
\begin{align}\label{Eq: Qxi-pre}
%\hspace{-0.3cm}
\begin{split}
   \mathcal{Q}_\xi:=\big\{ &[\bp_1] \in \R^2 \times P^1 \mid \\
    &\exists \text{ cusp-free  geodesic $\gamma$  of } d_{\mathrm{proj}} \textrm{ that }
    \\ &\text{connects a pair of}
    \\ & \text{elements from }[\bp_0]=\{\bp_0,\overline{\bp}_0\}
    \\ &\text{and }[\bp_1]=
    \{\bp_1,\overline{\bp}_1\} \big\}.
\end{split}
\end{align}
\end{definition}
\noindent
For an explicit analytic description  of $\mathcal{R}_\xi$, cf. \mbox{\cite[Thms.~9,3]{duits2014association}.} 
As shown in \cite[Prop.~3]{BekkersGSI} one has
\begin{equation}\label{Eq: Qxi}
\begin{array}{ll} 
\mathcal{Q}_\xi &=  
\{ \pm \{\bp,\overline{\bp}\} \;|\; \bp \in \mathcal{R}_{\xi} \} \\[6pt]
 & \equiv \mathcal{R}_\xi \cup \overline{\mathcal{R}}_\xi \cup -\mathcal{R}_\xi \cup -\overline{\mathcal{R}}_\xi,
\end{array}
\end{equation}
where we use the notations for sets $A\subset \R^2\times S^1$:
\[
\begin{array}{rl}
-A &= \{(-\bx, -\bn) \mid (\bx, \bn) \in A\}= \{-\bp \mid \bp \in A \}, \\
\overline{A} &=\{(\bx, -\bn) \mid (\bx, \bn) \in A\}= \{\overline{\bp} \mid \bp \in A \}. 
\end{array}
\]
and where in the final identification in \eqref{Eq: Qxi} we flatten the equivalence classes.
%Note that $\mathcal{Q}_{\xi} \subset \R^2 \times P^1$, as $\bp_1 \in \mathcal{Q}_{\xi} \Rightarrow \overline{\bp}_1 \in %\mathcal{Q}_{\xi}$. 
%\end{definition}

\begin{remark}[Scaling homothety]\label{rem:sh}
Let $\xi>0$. 
The sets $\mathcal{R}_{\xi}$, $\mathcal{Q}_\xi$ and $\tilde{\mathcal{Q}}_{\xi}$ follow from 
$\mathcal{R}_1, \mathcal{Q}_1,\tilde{\mathcal{Q}}_1$ by spatial zooming:
\[
\begin{array}{l}
\mathcal{R}_{\xi}= S_{\xi} \mathcal{R}_1 \textrm{ and }
\mathcal{Q}_{\xi}= S_{\xi} \mathcal{Q}_1\ , \\[5pt] \textrm{with scaling } S_{\xi}(\bx,\bn)=(\xi^{-1} \bx, \bn).
\end{array}
\]
This corresponds to a scaling homothety; \cite[Rem 1.2]{duits2014association},~ \cite[Rem 4.1]{BoscainESAIM}. 
\end{remark}

See Fig.~\ref{fig: set Q} for a double-sided cone visualization of $\tilde{\mathcal{Q}}_1 \subset \mathcal{Q}_1$ with color coded components \eqref{Eq: Qxi}.

%There the points in $\mathcal{R}_{\xi}$, $ \overline{\mathcal{R}}_{\xi}$, $-\mathcal{R}_{\xi}$ and $-%\overline{\mathcal{R}}_{\xi}$ are visualized %in black, blue, red, and green, respectively, 
%on a discrete grid. 
%in Fig.~\ref{fig: set Q}. 

\begin{lemma}[$\tilde{\mathcal{Q}}_\xi$ is a strict subset of $\mathcal{Q}_{\xi}$]\label{Lemma: Tilde Qxi minus Qxi}
The set $\tilde{\mathcal{Q}}_\xi$ % (cf.~\eqref{Eq: Tilde Qxi}) 
is a subset of $\mathcal{Q}_{\xi}$ %(cf.~\eqref{Eq: Qxi}),
and 
\begin{align*}
\mathcal{Q}_{\xi} \setminus \tilde{\mathcal{Q}}_\xi:= \big\{ &[\bp_1] \in\mathcal{Q}_{\xi} \mid \gamma_{\min} \text{ of } d_{\mathrm{proj}} \text{ s.t }\\
& \gamma_{\min}(0) = \bp_0, \gamma_{\min}(1) = \bp_1 \\
& \text{and } \gamma_{\min} \text{ has a cusp} \big\} \neq \emptyset.
\end{align*}
\end{lemma}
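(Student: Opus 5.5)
The proof has two parts. First I show the inclusion $\tilde{\mathcal{Q}}_\xi \subset \mathcal{Q}_\xi$. Second I show that the difference is nonempty, by exhibiting one class $[\bp_1]$ whose $d_{\mathrm{proj}}$-minimizers all have cusps while some non-minimizing cusp-free geodesic still reaches it.

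The inclusion is immediate from Def.~\ref{Def: Cusp-free sets}. If $[\bp_1]\in\tilde{\mathcal{Q}}_\xi$, there is a cusp-free \emph{minimizing} geodesic of $d_{\mathrm{proj}}$ connecting a representative of $[\bp_0]$ with one of $[\bp_1]$. A minimizing geodesic is in particular a geodesic, so it witnesses membership in $\mathcal{Q}_\xi$ in \eqref{Eq: Qxi-pre}.

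For the strict part, the difference set described in the lemma consists of classes in $\mathcal{Q}_\xi$ whose minimizer $\gamma_{\min}$ of $d_{\mathrm{proj}}$ has a cusp. By \eqref{dproj} this minimizer is realized by $d_{\cF_0}(\bp_0,\bp_1)$ or $d_{\cF_0}(\bp_0,\overline{\bp}_1)$. By Remark~\ref{rem:sh} I may take $\xi=1$ and then rescale. I plan to use the explicit configuration of Fig.~\ref{fig:lat}: $\bp_0=(\mathbf{0},(1,0))$ and $\bp_1^\varepsilon=((0,\varepsilon),(1,0))$ with $\varepsilon>0$ small, i.e.\ a pure sideways displacement.

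\emph{Step (a), both minimizers have cusps.} Neither $\bp_1^\varepsilon$ nor $\overline{\bp}_1^\varepsilon$ lies in $\mathcal{R}_1$, so by definition no minimizer of $d_{\cF_0}$ to either endpoint is cusp-free. I would verify this from the explicit description of $\mathcal{R}_\xi$ in \cite[Thms.~9,3]{duits2014association}. That description says $\mathcal{R}_1$ lies in the half-space $x\geq 0$ and, near the origin, inside a cone $|y|\lesssim x$ around the forward direction; the boundary point $x=0$ is reached only in degenerate cases with $y=0$. Both endpoints have $x=0$ and $y=\varepsilon\neq 0$, so they are excluded. The antipodal endpoint $\overline{\bp}_1^\varepsilon$ has the same spatial location and is handled by the same argument. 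As a sanity check, I would note that $d_{\cF_0}\to 0$ as $\varepsilon\to 0$ (sub-Riemannian continuity via Remark~\ref{rem:sub-riemannian manifold}), while any cusp-free curve needs angular travel bounded below. This is consistent with Fig.~\ref{fig:lat}, where the minimal length vanishes but the cusp-free length tends to $\pi$.

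\emph{Step (b), membership in $\mathcal{Q}_1$.} I need $[\bp_1^\varepsilon]\in\mathcal{Q}_1$, i.e.\ by \eqref{Eq: Qxi} some representative $\pm\bp_1^\varepsilon$ or $\pm\overline{\bp}_1^\varepsilon$ lies in $\mathcal{R}_1$. If this fails for the sideways point, I would instead perturb to $\bp_1=((\delta,\varepsilon),(\cos\phi,\sin\phi))$ with $\delta>0$ tiny and $\phi$ chosen so that $\bp_1\in\mathcal{R}_1$. The membership is then justified either directly or using the symmetries $-\mathcal{R}$ and $\overline{\mathcal{R}}$. The remaining task is to show that the global minimizer over the two instances in \eqref{dproj} has a cusp, by comparing its length with a shorter cusped competitor. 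Equivalently, I would pick a point of $\mathcal{Q}_1$ lying in the region beyond the first cusp-time of the geodesic flow, where the cusped geodesic is shorter than the cusp-free one.

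This is the main obstacle: finding a point in $\mathcal{Q}_1$ (not merely outside $\mathcal{R}_1$) whose global $d_{\mathrm{proj}}$-minimizer has a cusp. I would resolve it by citing the explicit analysis in \cite{BekkersGSI} and \cite{duits2014association}, which shows that $\mathcal{R}_1$ is not closed under $\pm$ and antipodal maps. Concretely, there is a $\bp\in\mathcal{R}_1$ such that $-\bp\notin\mathcal{R}_1$ and $\overline{(-\bp)}\notin\mathcal{R}_1$. Then $[-\bp]\in\mathcal{Q}_1$ by \eqref{Eq: Qxi}, but its two instances in \eqref{dproj} both have only cusped minimizers. The case of two tied minimizers, one cusped and one cusp-free, has to be excluded separately; I would do this by choosing $\bp$ off the Maxwell set, which is possible because that set has measure zero.
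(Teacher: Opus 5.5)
Your inclusion argument is fine, but the strict part rests on a false claim. In Step (a) you assert $\overline{\bp}_1^\varepsilon=((0,\varepsilon),(-1,0))\notin\mathcal{R}_1$. The description of $\mathcal{R}_1$ you use is not the one in \cite{duits2014association}. There it is shown that $\mathcal{R}_1\subset\{x\geq 0\}$, and that every spatial endpoint $(0,y)$ with $y\neq 0$ \emph{is} reached, with angle $\theta=\pi$, by a cusp-free U-curve. That U-curve from $\bp_0$ to $\overline{\bp}_1^\varepsilon$ is exactly the green curve in Fig.~\ref{fig:lat} (cf.\ the proof of Prop.~\ref{corr:tobewritten}).

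If Step (a) were true, the same reasoning would exclude all four representatives $\pm\bp_1^\varepsilon,\pm\overline{\bp}_1^\varepsilon$, since all lie on $x=0$, $y=\pm\varepsilon$. Then $[\bp_1^\varepsilon]\notin\mathcal{Q}_1$ by \eqref{Eq: Qxi}, and your example could not lie in $\mathcal{Q}_1\setminus\tilde{\mathcal{Q}}_1$. So your Steps (a) and (b) contradict each other.

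The lemma does not need both instances of \eqref{dproj} to be cusped. It needs the cusp-free instance to lose the length comparison, and here it does:
\begin{itemize}
\item $d_{\cF_0}(\bp_0,\overline{\bp}_1^\varepsilon)\geq\pi$, since $\cF_0\geq|\dot\theta|$ and $\theta$ must change by $\pi$;
\item $d_{\cF_0}(\bp_0,\bp_1^\varepsilon)\to 0$;
\item $\bp_1^\varepsilon$ has no cusp-free minimizer. Forward-gear endpoints on $x=0$ carry angle $\pi$. Backward-gear endpoints form $-\overline{\mathcal{R}}_1$, which would require $-\overline{\bp}_1^\varepsilon=((0,-\varepsilon),(1,0))\in\mathcal{R}_1$.
\end{itemize}
Hence for small $\varepsilon$ the $d_{\mathrm{proj}}$-minimizer is cusped while $[\bp_1^\varepsilon]\in\mathcal{Q}_1$. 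This is the paper's case~2, instantiated by Fig.~\ref{fig:lat}. Your ``sanity check'' contains exactly this comparison, but you attached it to the wrong representative.

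Your fallback for the ``main obstacle'' also fails, because it equates ``$\notin\mathcal{R}_1$'' with ``only cusped minimizers''. Consider $\Phi(\bx,\bn)=(-\bx,\bn)$, i.e.\ $\bp\mapsto-\overline{\bp}$. It fixes $\bp_0$, maps $[\bp]$ onto $[-\bp]$, and sends horizontal curves to horizontal curves with $(u^1,u^3)\mapsto(-u^1,u^3)$. So for $\mathcal{C}=1$ it preserves lengths and zero-crossings of $u^1$. The two instances of \eqref{dproj} for $[-\bp]$ are therefore the $\Phi$-images of those for $[\bp]$, with the same lengths and cusp structure.

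In particular, if $\gamma$ is a forward cusp-free minimizer from $\bp_0$ to $\bp\in\mathcal{R}_1$, then $\Phi\circ\gamma=-\overline{\gamma}$ is a minimizer from $\bp_0$ to $\overline{(-\bp)}$ with $u^1\leq 0$ throughout. It is thus cusp-free by Def.~\ref{def:cusp}. This leaves two readings of $\mathcal{R}_1$, and neither helps you:
\begin{itemize}
\item Read literally as in \eqref{Eq: Rxi}, $\overline{(-\bp)}\in\mathcal{R}_1$ always, so your hypothesis is never met.
\item Read as the forward-gear set of \cite{duits2014association}, your hypothesis holds for every $\bp\in\mathcal{R}_1$ with $x>0$, but implies nothing about cusps.
\end{itemize}
For instance, $\bp=((a,0),(1,0))$ with $0<a<\pi$ satisfies your hypothesis. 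Yet the reversed straight segment to $\overline{(-\bp)}=((-a,0),(1,0))$ has length $a<\pi\leq d_{\cF_0}(\bp_0,-\bp)$, so $[-\bp]\in\tilde{\mathcal{Q}}_1$.

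Whether the minimizer for $[-\bp]$ is cusped is decided by the same comparison of $d_{\cF_0}(\bp_0,\bp)$ with $d_{\cF_0}(\bp_0,\overline{\bp})$ as for $[\bp]$. So the non-closedness of $\mathcal{R}_1$ under these symmetries gains nothing, and the Maxwell-set remark does not address the issue. What is missing is an explicit point whose cusp-free representative is strictly longer, as in the sideways example above.
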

\begin{proof}
Let $[\bp_1] \in \mathcal{Q}_\xi$ be given by \eqref{Eq: Qxi-pre}. 
By \eqref{Eq: Qxi} and by symmetry considerations we assume, without loss of generality, that $\bp_1 \in \mathcal{R}_{\xi}$.  
By \cite[Thm.~10]{duits2014association} one has $\bp_1\in\mathcal{R}_{\xi} \Rightarrow \overline{\bp}_1\notin\mathcal{R}_{\xi}$.

We now distinguish between two cases:
\begin{enumerate}
\item
 If $d_{\cF_0}(\bp_0,\bp_1) \leq d_{\cF_0}(\bp_0,\overline{\bp}_1)$,  the minimizer of $d_{\mathrm{proj}}$ is cusp-free by ~\eqref{Eq: Rxi}. This implies that $[\bp_1] \in \tilde{\mathcal{Q}}_{\xi}$,
 \item
  If $d_{\cF_0}(\bp_0,\bp_1) > d_{\cF_0}(\bp_0,\overline{\bp}_1)$, the minimizer of $d_{\mathrm{proj}}$ does have a cusp, since $\overline{\bp}_1 \notin \mathcal{R}_{\xi}$. This implies $[\bp_1] \notin \tilde{\mathcal{Q}}_\xi$.
  \end{enumerate}
  The second case is rare, but can happen:
  Fig.~\ref{fig:lat} shows that the unique minimizing geodesic (in red) between $[\bp_0]$ and $[\bp_1]$ of $d_{\mathrm{proj}}$ has 2 cusps. This implies $[\bp_1] \notin \tilde{\mathcal{Q}}_{\xi}$. However, $[\bp_1]\in \mathcal{Q}_{\xi}$, due to the existence of a cusp-free geodesic (in green) in $d_{\mathrm{proj}}$, connecting $[\bp_0]$ and $[\bp_1]$. So $\mathcal{Q}_{\xi} \setminus \tilde{\mathcal{Q}}_{\xi} \neq \emptyset$. 
\end{proof}

% By \cite[Thm.~10]{duits2014association} we have that if $\bp_1\in\mathcal{R}_{\xi}$, then $\overline{\bp}_1\notin\mathcal{R}_{\xi}$. Thus, only one of the two geodesics (or neither) of the minimization problem $d_{\mathrm{proj}}$ \eqref{dproj} is cusp-free. We can redefine $\tilde{\mathcal{Q}}_\xi$ as the set of all points $[\bp_1]\in \mathcal{Q}_\xi$, such that there exists a cusp-free geodesic of $d_{\mathrm{proj}}$  and its length is smaller than the length of the geodesic with a cusp.

Recall from Prop.~\ref{Corollary: sign-semidefinite distance} that the model $d_{\mathrm{c}}$ is well-posed and practically beneficial on $\R^2 \times P^1$, as it only produces cusp-free geodesics.
The next theorem relates $d_{\mathrm{c}}$ to $d_{\mathrm{proj}}$ and shows that the nice practical properties of $d_{\mathrm{c}}$ also come with a price in terms of the triangle inequality.

\begin{figure}
\centering 
    \includegraphics[width=\linewidth]{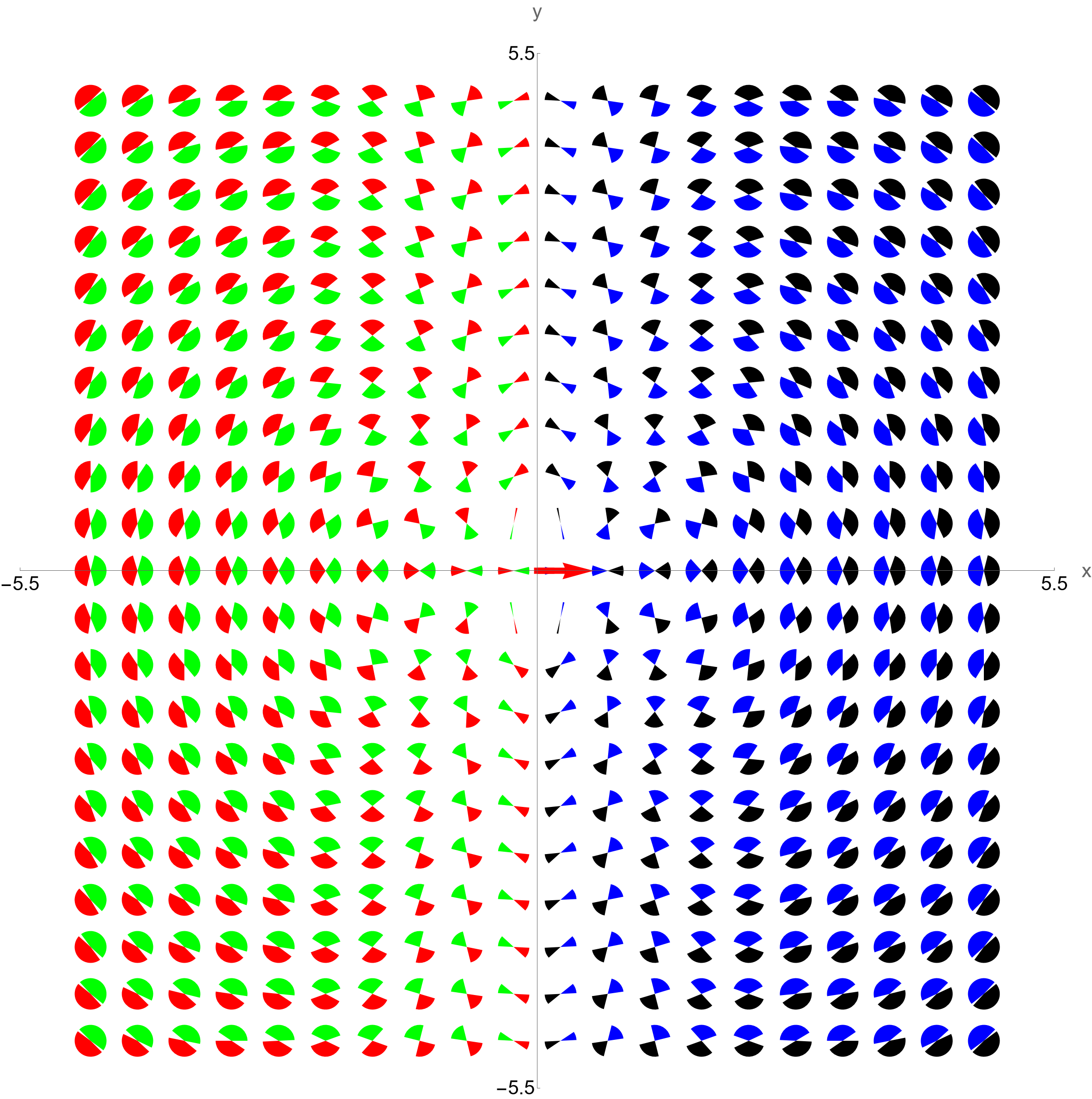}              
    \caption{Cone field visualization of the set $\tilde{\mathcal{Q}}_{\xi}$ 
    where models $d_{\mathrm{c}}$ and  $d_{\mathrm{proj}}$ coincide, recall \eqref{Eq: Tilde Qxi}. 
    In black: $\mathcal{R}_{\xi}$.
    In blue:
    $\overline{\mathcal{R}}_{\xi}$. In red:
    $-\mathcal{R}_{\xi}$. In green:
    $-\overline{\mathcal{R}}_{\xi}$. Here we set $\xi=1$. The general case $\xi>0$ follows by the scaling homothety, see Remark.~\ref{rem:sh}. 
    }\label{fig: set Q}
\end{figure} 

\begin{theorem}[Comparison $d_{\mathrm{c}}$ and $d_{\mathrm{proj}}$]\label{th:main}
Set $[\bp_0]$ as the reference element in $\R^2 \times P^1$.
Consider the set $\tilde{\mathcal{Q}}_{\xi}$ given by Eq.~\eqref{Eq: Tilde Qxi}.
\bigskip
\newline
If $[\bp_1] \in \tilde{\mathcal{Q}}_{\xi}$, then it can be connected by a minimizing geodesic departing from $[\bp_0]$ whose spatial projection is cusp-free.
%which is a minimizer %of %$d_{\cF_0}$. 
%PREVIOUS_ASSUMPTION_KEEP
%
%Assume\footnote{In %practice this %assumption is %obsolete see %Remark~\ref{rem:assum%ptioncanbedropped}} 
%it is also a %minimizer of %$d_{\mathrm{proj}}$ 
Then
\begin{equation}\label{thm: main in Q}
    d_{\mathrm{c}}([\bp_0],[\bp_1])= d_{\mathrm{proj}}([\bp_0],[\bp_1])
\end{equation}
and the triangle equality is satisfied:
\begin{equation} \label{dho}
\begin{array}{l}
d_{\mathrm{c}}([\bp_0],[\bp_1]) \leq \\
d_{\mathrm{c}}([\bp_0],[\bp_2]) +
d_{\mathrm{c}}([\bp_2],[\bp_1])
\end{array}
\end{equation}
for all $[\bp_2] \in \R^2 \times P^1$.
\bigskip
\newline
If $[\bp_1] \notin \tilde{\mathcal{Q}}_{\xi}$, then $
d_{\mathrm{c}}([\bp_0],[\bp_1]) >
d_{\mathrm{proj}}([\bp_0],[\bp_1])$ and the triangle inequality is violated; 
\newline
Either there exists  $[\bp_{\mathrm{cusp}_1}] \in \R^2\times P^1$ such that 
\begin{equation}\label{thm: main not in Q, 1 cusp}
\begin{array}{l}
    d_{\mathrm{c}}([\bp_0],[\bp_{\mathrm{cusp}_1}]) + d_{\mathrm{c}}([\bp_{\mathrm{cusp}_1}],[\bp_1]) \\
    < d_{\mathrm{c}}([\bp_0],[\bp_1]),
    \end{array}
\end{equation} 
or
 there exist $[\bp_{\mathrm{cusp}_1}],[\bp_{\mathrm{cusp}_2}] \in \R^2\times P^1$ such that
\begin{equation}\label{thm: main not in Q, 2 cusps}
\begin{array}{l}
    d_{\mathrm{c}}([\bp_0],[\bp_{\mathrm{cusp}_1}]) + d_{\mathrm{c}}([\bp_{\mathrm{cusp}_1}],[\bp_{\mathrm{cusp}_2}]) \\
    + \; d_{\mathrm{c}}([\bp_{\mathrm{cusp}_2}],[\bp_1])
    < d_{\mathrm{c}}([\bp_0],[\bp_1]).
    \end{array}
\end{equation}
\end{theorem}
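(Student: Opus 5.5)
The plan is to compare the two models through the curve classes they optimize over. By Prop.~\ref{Corollary: sign-semidefinite distance}, $d_{\mathrm{c}}([\bp_0],[\bp_1])$ is the infimum of $L$ over $\Gamma^c([\bp_0],[\bp_1])$. By \eqref{Eq:dproj_newnotation}, $d_{\mathrm{proj}}([\bp_0],[\bp_1])$ is the infimum of $L$ over all horizontal curves between representatives. Since $\Gamma^c$ is contained in the latter class, we always have $d_{\mathrm{c}}\geq d_{\mathrm{proj}}$. Moreover, $d_{\mathrm{proj}}$ is a genuine distance on $\R^2\times P^1$: it is a quotient of the sub-Riemannian metric $d_{\cF_0}$ by the isometric involution $\bp\mapsto\overline{\bp}$, which is an isometry by Lemma~\ref{Lemma: antipodal symmetry} combined with the symmetry of $d_{\cF_0}$. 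So $d_{\mathrm{proj}}$ satisfies the triangle inequality.

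\textbf{Case $[\bp_1]\in\tilde{\mathcal{Q}}_\xi$.} By definition there is a minimizing geodesic $\gamma_{\min}$ of $d_{\mathrm{proj}}$, between representatives, that is cusp-free. This gives the first claim. I would then argue that cusp-free means $u^1$ has constant sign. Along an analytic sub-Riemannian geodesic with $\mathcal{C}=1$, $u^1$ can change sign only through a zero-crossing, and that is a cusp in the sense of Def.~\ref{def:cusp}. Degenerate zeros correspond to pure in-place rotations, which do not break the sign condition. Hence $\gamma_{\min}\in\Gamma^c$, so $d_{\mathrm{c}}\leq L(\gamma_{\min})=d_{\mathrm{proj}}$, which gives \eqref{thm: main in Q}. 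The triangle inequality \eqref{dho} then follows from the chain
\[
d_{\mathrm{c}}([\bp_0],[\bp_1])=d_{\mathrm{proj}}([\bp_0],[\bp_1])\leq d_{\mathrm{proj}}([\bp_0],[\bp_2])+d_{\mathrm{proj}}([\bp_2],[\bp_1])\leq d_{\mathrm{c}}([\bp_0],[\bp_2])+d_{\mathrm{c}}([\bp_2],[\bp_1]).
\]

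\textbf{Case $[\bp_1]\notin\tilde{\mathcal{Q}}_\xi$.} Every minimizer of $d_{\mathrm{proj}}$ has a cusp.

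\emph{Strict inequality.} Suppose instead $d_{\mathrm{c}}=d_{\mathrm{proj}}$. Existence of minimizers for $d_{\mathrm{c}}$ (from existence for $d_{\cF_0^+}$ in \cite{duitsmeestersmirebeauportegies}, taken over four instances) then gives a curve in $\Gamma^c$ of length $d_{\mathrm{proj}}$. That curve is a minimizer of $d_{\mathrm{proj}}$, hence a sub-Riemannian geodesic, and it has constant-sign $u^1$, so it is cusp-free. This contradicts $[\bp_1]\notin\tilde{\mathcal{Q}}_\xi$.

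\emph{Violation of the triangle inequality.} Take a minimizer $\gamma_{\min}$ of $d_{\mathrm{proj}}$. Sub-Riemannian geodesics in $\SE(2)$ have at most two cusps on minimizing segments (cf.\ \cite{duits2014association,BoscainESAIM}), so $\gamma_{\min}$ has one or two cusp times. Cut it there, and let $\bp_{\mathrm{cusp}_i}$ be the cusp points. Each resulting sub-arc has constant-sign $u^1$, so it lies in $\Gamma^c$ between the corresponding classes. Hence $d_{\mathrm{c}}$ of each consecutive pair is at most the length of that sub-arc. Summing gives
\[
\sum d_{\mathrm{c}}\leq L(\gamma_{\min})=d_{\mathrm{proj}}([\bp_0],[\bp_1])<d_{\mathrm{c}}([\bp_0],[\bp_1]),
\]
which is \eqref{thm: main not in Q, 1 cusp} or \eqref{thm: main not in Q, 2 cusps}.

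\textbf{Main obstacle.} The hardest step is justifying the bound of at most two cusps on minimizers, together with the equivalence of cusp-free and constant-sign $u^1$ (including in-place rotation degeneracies). For the cusp bound I would first try to invoke the known cut-time and cusp analysis of the Reeds–Shepp car, which shows that cusps on minimizers occur only near the boundaries.
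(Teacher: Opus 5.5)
Your proof is correct. It shares the paper's overall structure: equality on $\tilde{\mathcal{Q}}_\xi$, then the triangle inequality by chaining through $d_{\mathrm{proj}}$, then violation by cutting a $d_{\mathrm{proj}}$-minimizer at its at most two cusps \cite{BoscainESAIM}. Two steps, however, are argued differently.

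\emph{The equality \eqref{thm: main in Q}.} The paper splits into four cases according to whether $\bp_1$ lies in $\mathcal{R}_\xi$, $\overline{\mathcal{R}}_\xi$, $-\mathcal{R}_\xi$ or $-\overline{\mathcal{R}}_\xi$, using the decomposition \eqref{Eq: Qxi} from \cite{BekkersGSI}. It then compares the four instances of \eqref{dmon} explicitly via Lemma~\ref{Lemma: antipodal symmetry}. You instead read off $d_{\mathrm{c}}\geq d_{\mathrm{proj}}$ from the curve-class formula of Prop.~\ref{Corollary: sign-semidefinite distance}. You then only need the cusp-free minimizer to lie in $\Gamma^c$. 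This removes the case distinction, and you never need to know whether that minimizer moves forward or backward.

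Your justification that, for $\mathcal{C}=1$, cusp-free forces $u^1$ to have constant sign (degenerate zeros of $u^1$ occur only on pure in-place rotations) fills a gap. The paper leaves exactly this implicit when it says the first equality in \eqref{usetildeQxi} ``follows from $\gamma_{\mathrm{min}}$ being a cusp-free geodesic''. In return, the paper's case analysis tells you which of the four instances realizes $d_{\mathrm{c}}$, which is what Fig.~\ref{fig:monprojinQ} displays.

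\emph{The strict inequality off $\tilde{\mathcal{Q}}_\xi$.} The paper only observes that the constraint $u^1\geq 0$ is restrictive, which by itself gives only $d_{\mathrm{c}}\geq d_{\mathrm{proj}}$. Your contradiction argument uses existence of $d_{\cF_0^+}$-minimizers: a $d_{\mathrm{c}}$-minimizer of length $d_{\mathrm{proj}}$ would be a cusp-free $d_{\mathrm{proj}}$-minimizer. This is what actually makes the inequality strict, and it needs only the easy implication that constant sign of $u^1$ excludes cusps.
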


\begin{proof}
%Assume that the minimizer of $d_{\mathrm{proj}}([\bp_0],[\bp_1])$ has no cusps in its spatial projection \eqref{definiton cusp}. Note that assumption is justified by the result of \cite{BekkersGSI} where geodesic fronts from $\bar{\bp}_0$ only reach geodesic with a cusps and furthermore geodesic without cusps are always globally minimizingComplete the proof s&}

For all $[\bp_1] \in \tilde{\mathcal{Q}}_\xi \subset \mathcal{Q}_{\xi}$, we have by \eqref{Eq: Qxi} that $\bp_1, \overline{\bp}_1 \in \mathcal{Q}_\xi$. Now $\bp_1$ is an element of $\mathcal{R}_\xi, \overline{\mathcal{R}}_\xi, -\mathcal{R}_\xi$ or $-\overline{\mathcal{R}}_\xi$. Each of these 4 cases is treated separately: 
\begin{enumerate}
    \item[\textbf{1)}] For $\bp_1 \in \mathcal{R}_\xi$, there exists a cusp-free minimizing geodesic $\gamma_{\mathrm{min}}(t)$ with $t\in[0,1]$ of the model $d_{\cF_0}$, such that $\gamma_{\mathrm{min}}(0)=\bp_0$ and $\gamma_{\mathrm{min}}(1)=\bp_1$. Then:
    %We can derive the following (in)equalities:
    \begin{equation} \label{usetildeQxi}
     \bp_1 \in \mathcal{R}_\xi \Rightarrow
    \left\{
    \begin{array}{ll}
        d_{\cF^+_0}(\bp_0,\bp_1) &= d_{\cF_0}(\bp_0,\bp_1) \\
        &\leq d_{\cF_0}(\bp_0,\overline{\bp}_1) \\
        &\leq  d_{\cF^+_0}(\bp_0,\overline{\bp}_1)
        \end{array}
        \right.
    \end{equation}
    The first equality follows from $\gamma_{\mathrm{min}}$ being a cusp-free geodesic. The first inequality follows the definition of $\tilde{\mathcal{Q}}_{\xi}$ given in \eqref{Eq: Tilde Qxi}. The second inequality follows from the additional constraint of the +-control in Eq.~\eqref{Eq: finsler distance forward}. 
    
   Furthermore, one has $d_{\cF_0}(\bp_0,\overline{\bp}_1)=d_{\cF_0}(\overline{\bp}_0,\bp_1) \leq d_{\cF^+_0}(\overline{\bp}_0,\bp_1)$, where the equality follows from antipodal symmetry (Lemma~\ref{Lemma: antipodal symmetry}) and the inequality follows from the constraint of the +-control. 
  Similarly one has $d_{\cF_0}(\bp_0,\bp_1)=d_{\cF_0}(\overline{\bp}_0,\overline{\bp}_1) \leq d_{\cF^+_0}(\overline{\bp}_0,\overline{\bp}_1)$. Combining all results, we get that:
    \begin{align}\label{eq:dc is dF_0^+}
        d_{\cF_0^+}(\bp_0,\bp_1) \leq \begin{cases}
            d_{\cF^+_0}(\bp_0,\overline{\bp}_1)\\
            d_{\cF^+_0}(\overline{\bp}_0,\bp_1)\\
            d_{\cF^+_0}(\overline{\bp}_0,\overline{\bp}_1).
        \end{cases}
    \end{align}
    Hence, we obtain by the definition of $d_{\mathrm{c}}$, Eq.~\eqref{dmon}, \eqref{eq:dc is dF_0^+}, \eqref{usetildeQxi}, and by definition of $d_{\mathrm{proj}}$, Eq.~\eqref{dproj} that
    \[
    \begin{array}{ll}
    d_{\mathrm{c}}([\bp_0],[\bp_1]) &=d_{\cF_0^+}(\bp_0,\bp_1)=d_{\cF_0}(\bp_0,\bp_1) \\
     &=d_{\mathrm{proj}}([\bp_0],[\bp_1]).
    \end{array}
    \]
      
    %, we have that:
    %\begin{equation*} 
    %    d_{\mathrm{proj}}([\bp_0],[\bp_1])=d_{\cF_0}(\bp_0,\bp_1)=d_{\cF_0^+}(\bp_0,\bp_1),
    %\end{equation*}
    %from which we can conclude %that indeed $d_{\mathrm{c}}%([\bp_0],%[\bp_1])=d_{\mathrm{proj}}%([\bp_0],[\bp_1])$.

    \item[\textbf{2)}] For $\bp_1 \in \overline{\mathcal{R}}_\xi$, we use the following symmetry with the geodesic $\gamma_{\mathrm{min}}$ of the previous case \textbf{1)}:
    \[ \gamma^{\mathrm{new}}_{\mathrm{min}}(t)=\overline{\gamma}_{\mathrm{min}}(1-t), 
    \]
    for all $t \in [0,1]$. 
    The lengths of geodesics $\gamma_{\mathrm{min}}$ and $\gamma_{\mathrm{min}}^{\mathrm{new}}$ are equal by the construction of $\cF_0$, recall Eq. \eqref{Eq: finsler function}. So in this case:
      \[
    \begin{array}{ll}
    d_{\mathrm{c}}([\bp_0],[\bp_1]) &=
    d_{\cF_0^+}(\bp_1,\overline{\bp}_0) =d_{\cF_0^+}(\bp_0,\overline{\bp}_1)\\
     % =d_{\cF_0}(\bp_0,\overline{\bp}_1)
     &=d_{\mathrm{proj}}([\bp_0],[\bp_1]).
    \end{array}
    \]
    where the second equality follows by Lemma \ref{Lemma: antipodal symmetry}. We note that $\gamma_{\mathrm{min}}$
      connects $\bp_0$ to $\bp^{\textrm{prev}}_1$
     % \footnote{\mbox{where $\bp^{\textrm{prev}}_1 \in \mathcal{R}_\xi$ is equal to $\bp_1$ of case \textbf{1)}}} 
      iff $\gamma_{\mathrm{min}}^{\mathrm{new}}$ connects $\overline{\bp}^{\textrm{prev}}_1=\bp_1$ to $\overline{\bp}_0$.
    \item[\textbf{3)}] For $\bp_1 \in -\mathcal{R}_\xi$, we rely on the symmetry:
    \[ \gamma^{\mathrm{new}}_{\mathrm{min}}(t)=-\gamma_{\mathrm{min}}(t).\]
    The length of both geodesics $\gamma_{\mathrm{min}}$ and $\gamma_{\mathrm{min}}^{\mathrm{new}}$ is equal by the construction of $\cF_0$. So in this case:
 \[
    \begin{array}{ll}
    d_{\mathrm{c}}([\bp_0],[\bp_1]) &=d_{\cF_0^+}(\overline{\bp}_0,\bp_1) %=d_{\cF_0}(\overline{\bp}_1,\overline{\bp}_0)\\
     %&
     =d_{\mathrm{proj}}([\bp_0],[\bp_1]),
    \end{array}
    \]
    where we note that $-\bp_0=\overline{\bp}_0$ and $-\bp_1^{\textrm{prev}}=\bp_1$. \\[2pt]
    \item[\textbf{4)}] For $\bp_1 \in -\overline{\mathcal{R}}_\xi$, we rely on the symmetry:
    \[ \gamma_{\mathrm{min}}^{\mathrm{new}}(t)=-\overline{\gamma}_{\mathrm{min}}(1-t).
    \] 
   The lengths of $\gamma_{\mathrm{min}}$ and $\gamma_{\mathrm{min}}^{\mathrm{new}}$ coincide, and:
 \[
    \begin{array}{ll}
    d_{\mathrm{c}}([\bp_0],[\bp_1]) &=d_{\cF_0^+}(\bp_1,\bp_0) = d_{\cF_0^+}(\overline{\bp}_0,\overline{\bp}_1) %=d_{\cF_0}(\overline{\bp}_1,\overline{\bp}_0)\\
     %&
     \\ &=d_{\mathrm{proj}}([\bp_0],[\bp_1]),
    \end{array}
    \]
    where we note that $-\overline{\bp}_1^{\textrm{prev}}=\bp_1$ and $-\overline{\bp}_0=\bp_0$.
\end{enumerate}
% For $[\bq]=[\bp_1],[\bp_2]$ or $[\bp_2^{-1}\bp_1]\in \mathcal{Q}_\xi$, we have that $d_{\mathrm{c}}([\bp_0],[\bq])=d_{\mathrm{proj}}([\bp_0],[\bq])$ and thus the triangle inequality is satisfied as $d_{\mathrm{proj}}$ satisfies the triangle inequality. 
For $[\bp_1]\in \tilde{\mathcal{Q}}_\xi$ and for $[\bp_2]\in \R^2 \times P^1$, we have
\begin{align*}
    d_{\mathrm{c}}([\bp_0],[\bp_1])&=d_{\mathrm{proj}}([\bp_0],[\bp_1]) \\
    &\leq d_{\mathrm{proj}}([\bp_0],[\bp_2]) + d_{\mathrm{proj}}([\bp_2],[\bp_1])\\
    &\leq d_{\mathrm{c}}([\bp_0],[\bp_2]) + d_{\mathrm{c}}([\bp_2],[\bp_1]).
\end{align*}
The first inequality follows from the triangle inequality of $d_{\mathrm{proj}}$. The second inequality follows from the constraint $u^1\geq0$ of the +-control and thus the triangle inequality is satisfied for $d_{\mathrm{c}}$ on $\tilde{\mathcal{Q}}_\xi$.
We have now shown that \eqref{thm: main in Q}, \eqref{dho} hold. 

Now let us verify, the second part of the theorem where $[\bp_1] \notin \tilde{\mathcal{Q}}_{\xi}$.
Then the minimizing sub-Riemannian geodesic of the model $d_{\mathrm{proj}}$ has at least one cusp. 
Thereby imposing the constraint $u^1\geq 0$ is a restrictive constraint and therefore the distance increases, $
d_{\mathrm{c}}([\bp_0],[\bp_1]) >
d_{\mathrm{proj}}([\bp_0],[\bp_1])$.

Finally, we show the triangle inequality is violated for $[\bp_1] \notin \tilde{\mathcal{Q}}_{\xi}$. 
\citet[Cor.~4.5]{BoscainESAIM} has shown that the minimizing geodesic of $d_{\cF_0}$ has atmost two cusps. We assume without loss of generality that $d_{\cF_0}(\bp_0,\bp_1)\leq d_{\cF_0}(\bp_0,\overline{\bp}_1)$, thereby $d_{\mathrm{proj}}([\bp_0],[\bp_1])=d_{\cF_0}(\bp_0,\bp_1)$.  Suppose the minimizing sub-Riemannian geodesic has one cusp and let $\bp_{\mathrm{cusp}}$ be the cusp of the minimizing geodesic from $\bp_0$ to $\bp_1$, then by the definition of a cusp (Def.~\ref{def:cusp}) one has:
\begin{align*}
    d_{\cF_0}(\bp_0,\bp_1) &=d_{\cF_0^\pm}(\bp_0,\bp_{\mathrm{cusp}})+d_{\cF_0^\mp}(\bp_{\mathrm{cusp}},\bp_1)
\end{align*}
where $\cF_0^-$ is the negative control version of $\cF_0^+$ in \eqref{Eq: finsler distance forward}, imposing $u^1\leq 0$ rather than $u^1\geq 0$. 
%Each of the two parts of the geodesic is now cusp-free and thereby minimizing \cite[]{BoscainESAIM}.

From the Definition of $d_{\mathrm{c}}$, see Eq.~\eqref{dmon}, we have that 
\begin{equation*}
    d_{\mathrm{c}}([\bp_0],[\bp_{\mathrm{cusp}}]) \leq d_{\cF_0^\pm}(\bp_0,\bp_{\mathrm{cusp}})
\end{equation*}
and 
\begin{equation*}
    d_{\mathrm{c}}([\bp_{\mathrm{cusp}}],[\bp_1]) \leq d_{\cF_0^\mp}(\bp_{\mathrm{cusp}},\bp_1).
\end{equation*} 

However, imposing the constraint $u^1 \geq 0$, implies $d_{\mathrm{c}}([\bp_0],[\bp_1]) > d_{\mathrm{proj}}([\bp_0],[\bp_1])$. Now, we conclude
\begin{align*}
    d_{\mathrm{c}}([\bp_0],[\bp_{\mathrm{cusp}}])+ d_{\mathrm{c}}([\bp_{\mathrm{cusp}}],[\bp_1]) &\leq \\ d_{\cF_0^\pm}(\bp_0,\bp_{\mathrm{cusp}})+d_{\cF_0^\mp}(\bp_{\mathrm{cusp}},\bp_1) &=d_{\cF_0}(\bp_0,\bp_1)\\
=d_{\mathrm{proj}}([\bp_0],[ \bp_1]) 
    &< d_{\mathrm{c}}([\bp_0],[\bp_1])
\end{align*}
This proves \eqref{thm: main not in Q, 1 cusp}. The case of two cusps is similar.

%For the second case, that the minimizing sub-Riemannian geodesic from $\bp_0$ to $\bp_1$ has two cusps, one has that:
%\begin{align*}
%    d_{\cF_0}(\bp_0,\bp_1) & =d_{\cF_0^\pm}(\bp_0,\bp_{\mathrm{cusp}_1})+d_{\cF_0^\mp}(\bp_{\mathrm{cusp}_1},\bp_{\mathrm{cusp}_2}) \\
 %   %d_{\cF_0^\pm}%%%(\bp_{\mathrm{cusp}_2}%,\bp_1).
%\end{align*}
%With the same reasoning %as in the one-cusp %situation, one can %concludes that the %triangle inequality is %violated.

\end{proof}

The natural question that now arises is: 
\\[4pt]
\emph{What are the differences between the model $d_{\mathrm{proj}}$ and the cusp-free model $d_{\mathrm{c}}$ when we start from the origin $[\bp_0]$ and the end point $[\bp_1]$ is not in %the set 
$\tilde{\mathcal{Q}}_{\xi}$?}
\\[4pt]
In the next section, we shall visualize the differences. The absence of cusps in $d_{\mathrm{c}}$ increases the distance compared to $d_{\mathrm{proj}}$ (see Fig.~\ref{fig:monprojoutcone}), which confirms the final part of Theorem~\ref{th:main}. 

In the development of the spheres (Figs. \ref{fig:Maxwellmon}, \ref{fig:Maxwellproj}), we will see that a vertical line segment through the origin is part of the spheres with $R\leq \pi$ of $d_{\mathrm{c}}$, while not being part of the spheres of $d_{\mathrm{proj}}$. 
Essentially, the main difference becomes visible if we connect
$[\bp_0]=\{\bp_0,\overline{\bp}_0\}$ to $[\bp_1^{\varepsilon}]=\{\bp^{\varepsilon}_1,\overline{\bp}^{\varepsilon}_1\}$ where
\begin{equation} \label{p1e}
 \bp_0=((0,0),(1,0)), \ \bp^{\varepsilon}_1=((0,\varepsilon),(1,0)), 
\end{equation}
and $0<\varepsilon \xi <\pi$. That is, if the boundary conditions approach each other from the spatial lateral direction. Then, the projective line bundle geodesic has two cusps, whereas the cusp-free geodesic of the model $d_{\mathrm{c}}$ first applies an in-place rotation at the boundary and then connects to $[\bp_1^\varepsilon]$ with a $U$-curve (which in the limiting case $\varepsilon \downarrow 0$ becomes a straight line). This can be observed in Fig.~\ref{fig:lat}.

It also confirms the following theoretical result that  applies to the general cost case with $\mathcal{C}\geq \delta$ \eqref{Eq: finsler function}. %Similar to 

\begin{proposition}[Global and local controllability]
\label{corr:tobewritten}
Both models $d_{\mathrm{c}}, d_{\mathrm{proj}}$ are globally controllable, 
but $d_{\mathrm{c}}$ is not locally controllable as it has the property
\begin{equation} \label{eq:bb}
\limsup_{[\bp]\to[\bp_0]} d_{\mathrm{c}}([\bp],[\bp_0]) \geq  \pi \delta.
\end{equation}
\end{proposition}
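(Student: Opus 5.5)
Global controllability comes first. Every pair of points in $\R^2\times S^1$ can be joined by a horizontal curve of finite length (Remark~\ref{rem:sub-riemannian manifold}), so $d_{\cF_0}<\infty$ and hence $d_{\mathrm{proj}}<\infty$. For $d_{\mathrm{c}}$ we also need finiteness of $d_{\cF_0^+}$, i.e.\ a forward-only connection of any two points. We build one explicitly. Take $\bp=(\bx,\bn)$ and $\bq=(\by,\bm)$. Rotate in place at $\bx$ from $\bn$ to the direction of $\by-\bx$ (or skip this if $\bx=\by$). Drive forward in a straight line to $\by$. Then rotate in place to $\bm$. Each piece is in $\Gamma_0^+$ (pure rotations have $u^1=0$), and the concatenation is piecewise $C^1$. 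Since $\mathcal{C}$ is continuous, it is bounded on the compact set swept out, so the length is finite. Hence $d_{\mathrm{c}}\le d_{\cF_0^+}<\infty$ everywhere.

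For \eqref{eq:bb} we use the lateral family \eqref{p1e}: $\bp_1^\varepsilon=((0,\varepsilon),(1,0))$ with $\varepsilon\downarrow 0$, so $[\bp_1^\varepsilon]\to[\bp_0]$. It suffices to show $d_{\mathrm{c}}([\bp_1^\varepsilon],[\bp_0])\ge \pi\delta - o(1)$. By Prop.~\ref{Corollary: sign-semidefinite distance} together with symmetry, this distance equals $\inf L(\gamma)$ over $\gamma\in\Gamma^c$ joining the classes. Since $\mathcal{C}\ge\delta$, we have $L(\gamma)\ge \delta\int_0^1|\dot\theta|\,dt$. It therefore suffices to show that any $\gamma\in\Gamma^c$ connecting the two classes has total turning $\int|\dot\theta|\ge \pi - o(1)$. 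By the symmetries used in Lemma~\ref{Lemma: antipodal symmetry}, we may take $u^1\ge0$.

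The key geometric step is as follows. Write $\bx(1)-\bx(0)=\int_0^1 u^1\bn\,dt$ with $u^1\ge0$. This displacement is a nonnegative combination of directions $\bn(t)$. The endpoints $\bx(0)$ and $\bx(1)$ are $(0,0)$ and $(0,\varepsilon)$ in some order, so the displacement is $\pm(0,\varepsilon)$, which is orthogonal to the endpoint directions $\pm(1,0)$. Suppose the total variation $V=\int|\dot\theta|$ satisfies $V<\pi/2$. Then all $\bn(t)$ lie within an arc of half-width less than $\pi/2$ around $\bn(0)=\pm(1,0)$, so $\bn(t)\cdot\bn(0)>0$. The displacement then has positive component along $\bn(0)$ unless it vanishes. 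It does not vanish, since it has length $\varepsilon>0$, which forces $u^1\not\equiv0$. This contradicts orthogonality. So a crude bound $V\ge\pi/2$ is easy.

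The main obstacle is sharpening this to $\pi$. A refinement of the argument gives $\theta(t)$ ranging over an interval of length at least $\pi/2$ in both directions, which only gives $\pi/2$. The bound $\pi$ must use the endpoint direction as well. The end orientation is $\pm(1,0)$, and the turning needed to get there must also be counted. The sharp argument is this. The displacement $(0,\pm\varepsilon)$ must lie in the closed convex cone spanned by $\{\bn(t)\}$. That requires $\theta$ to attain a value with a component in the $\pm(0,1)$ direction, at angular distance $\ge\pi/2$ from the start. Then $\theta$ must return to an orientation equivalent to $(1,0)$ or $(-1,0)$, at distance $\ge\pi/2$ again. This gives total variation $\ge\pi$, and the infimum is approached by the in-place-rotation plus short-segment curves of Fig.~\ref{fig:lat}.

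I would first check carefully that the cone condition yields turning of at least $\pi/2$ both before and after reaching the extremal direction. The constraint $u^1\ge0$ is exactly what forbids the cusp shortcut. Taking $\limsup$ then gives $\ge\pi\delta$. Non-local controllability follows immediately, because $d_{\mathrm{c}}$ does not tend to $0$ as $[\bp]\to[\bp_0]$.
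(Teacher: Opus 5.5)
Your proof is correct, but it takes a different route from the paper's.

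\textbf{Global controllability.} The paper just invokes Chow--Rashevskii and the known finiteness of $d_{\cF_0}$ and $d_{\cF_0^+}$. Your rotate--drive--rotate curve is an explicit, self-contained witness in $\Gamma_0^+$.

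\textbf{The bound \eqref{eq:bb}.} The paper reduces to $\mathcal{C}=1$ and reads off the limit from the known shape of the minimizers along $\bp_1^\varepsilon$. It contrasts the double-cusp $d_{\cF_0}$-geodesics with the $d_{\cF_0^+}$ minimizers, which are U-curves or curves with equal in-place rotations at both ends. All of these collapse to a rotation over $\pi$. That argument relies on external results and only describes what happens as $\varepsilon\downarrow 0$. You instead prove a uniform lower bound from $\cF_0\geq\delta|\dot\theta|$ and the cone obstruction created by a sign-definite $u^1$.

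\textbf{Tightening the step you flagged.}
\begin{enumerate}
\item Phrase the first part as ``$\bn(t)\cdot\bn(0)>0$ for all $t$ is impossible'', not via $V<\pi/2$.
\item Let $t^*$ be the first zero of $t\mapsto\bn(t)\cdot\bn(0)$. Then $\bn(t^*)=(0,\pm1)$, which is at angular distance exactly $\pi/2$ from both $(1,0)$ and $(-1,0)$.
\item Hence $\int_0^{t^*}|\dot\theta|\,{\rm d}t\geq\pi/2$ and $\int_{t^*}^1|\dot\theta|\,{\rm d}t\geq\pi/2$, whichever representatives of $[\bp_0]$ and $[\bp_1^\varepsilon]$ are the endpoints.
\end{enumerate}
So the check you flagged goes through and covers all four endpoint combinations at once.

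\textbf{What each route gives.} Your route is elementary, needs no knowledge of the geodesics, and handles any $\mathcal{C}\geq\delta$ directly. It gives $d_{\mathrm{c}}([\bp_1^\varepsilon],[\bp_0])\geq\pi\delta$ for every $\varepsilon>0$, so the $o(1)$ is unnecessary. The paper's route additionally identifies the limit and the minimizers shown in Fig.~\ref{fig:lat}. For $\mathcal{C}=1$, your own rotate--drive--rotate curve of length $\pi+\xi\varepsilon$ already shows that the constant $\pi$ is sharp.
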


\begin{proof}
Essentially, the estimate $\pi\delta$ is due to the fact that if we approach the origin $[\bp_0]$ from the side, that is, via $[\bp_1^{\varepsilon}]$ \eqref{p1e}, the minimizing geodesic of the new model $d_{\mathrm{c}}$ (depicted in green in Fig.~\ref{fig:lat}) does \emph{not} exhibit the (undesirable) double cusps in its spatial projection and approaches an in-place rotation over $\pi$ for $\varepsilon \downarrow 0$. For details on the proof, see Appendix~\ref{App:PropGlobalLocalControllability}.
\end{proof}

\begin{figure*}
\centering
\hfill
    \begin{subfigure}[T]{0.4\textwidth}
        \centering        
        \includegraphics[height=0.85\textwidth, angle=90, trim={500 100 200 450}, clip]{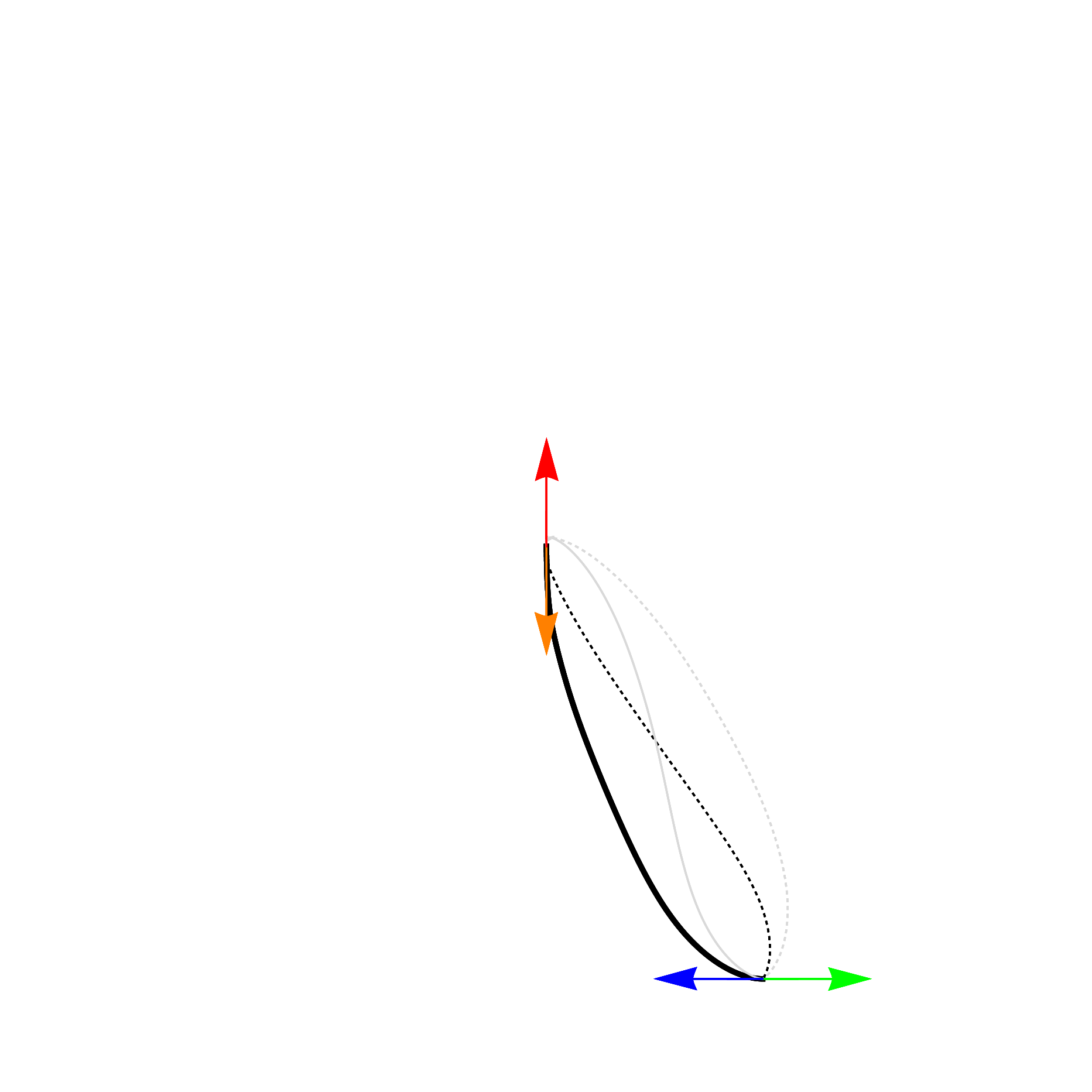}
        \caption{The four minimizing geodesics of the new model $d_{\mathrm{c}}$. See the right hand side of Eq.~\eqref{dmon}.}
        \label{fig:geodesics_dc_inQ}
    \end{subfigure}   
    \hfill
    \begin{subfigure}[T]{0.4\textwidth}
        \centering        
        \includegraphics[height=0.85\textwidth,angle=90, trim={500 100 200 450}, clip]{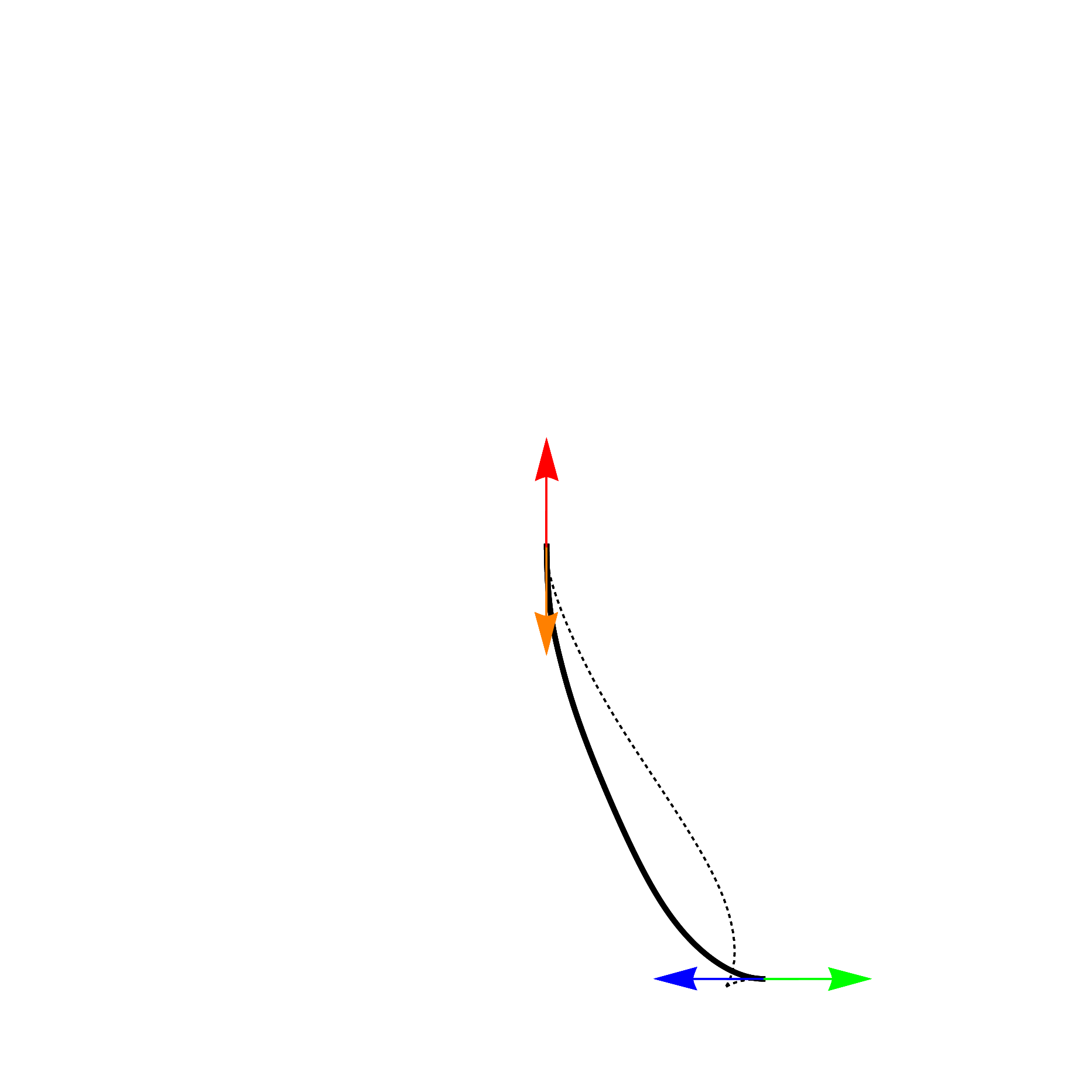}
        \caption{The two minimizing geodesics of the model $d_{\mathrm{proj}}$. See the right hand side of Eq.~ \eqref{dproj}.}
        \label{fig:inQ}
    \end{subfigure}
\hfill~
        
    \caption{Illustration of Theorem \ref{th:main} for case $[\bp_1] \in \mathcal{Q}_\xi$. The bold geodesics of the models $d_{\mathrm{c}}$ and $d_{\mathrm{proj}}$ are the same, and they are the minimizers of both models, i.e., $d_{\mathrm{c}}=d_{\mathrm{proj}}$.}
    \label{fig:monprojinQ}
\end{figure*}

\begin{figure*}
\centering
\hfill
\begin{subfigure}[T]{0.4\textwidth}
    \centering
    \includegraphics[height=0.85\textwidth,angle=90, trim={450 80 400 450}, clip]{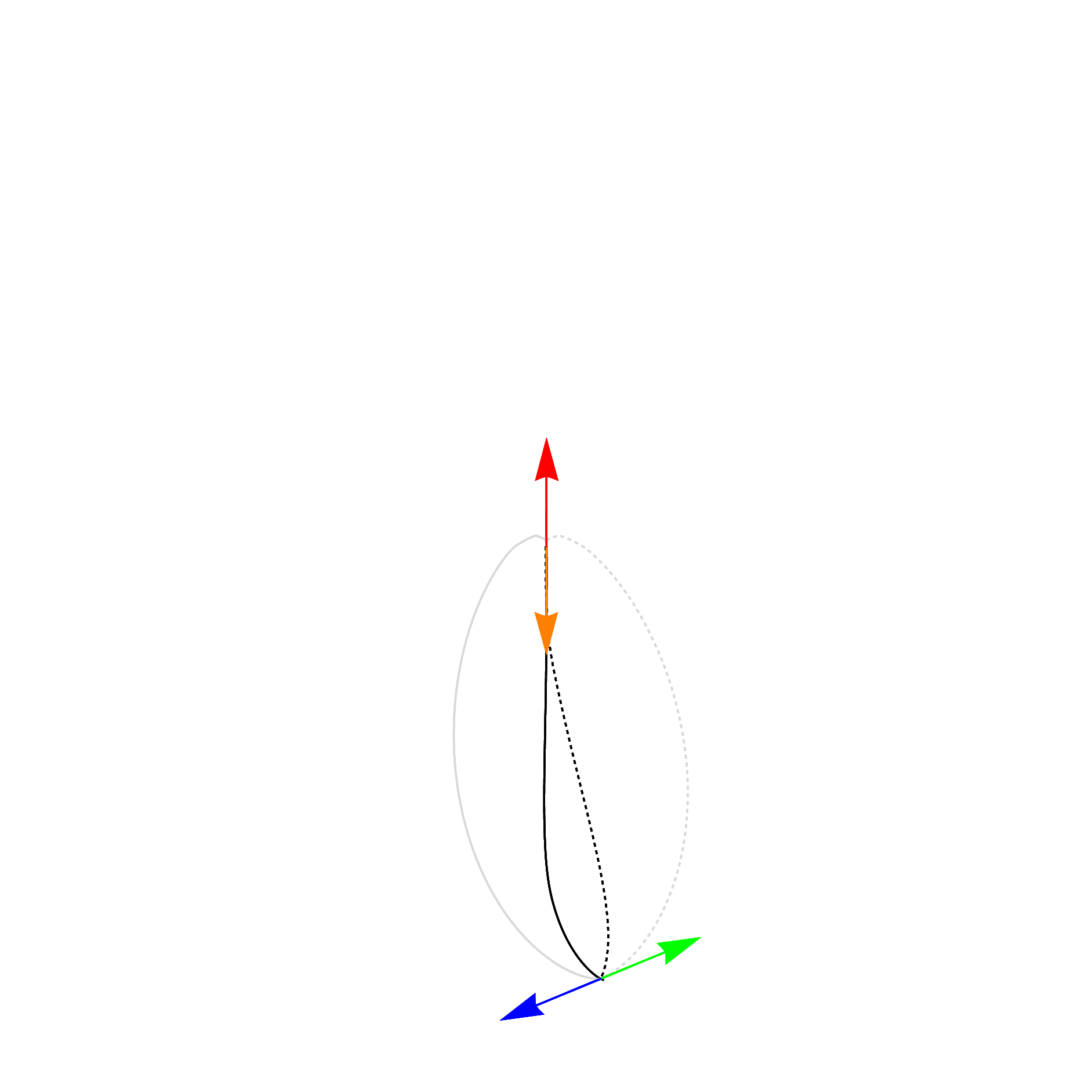}
    \caption{The four minimizing geodesics of the new model $d_{\mathrm{c}}$. See the right hand side of Eq.~\eqref{dmon}.}
    \label{fig:geodesics_dc_not_inQ}
\end{subfigure}    
\hfill
\begin{subfigure}[T]{0.4\textwidth}
    \centering
    \includegraphics[height=0.85\textwidth,angle=90, trim={450 80 400 450}, clip]{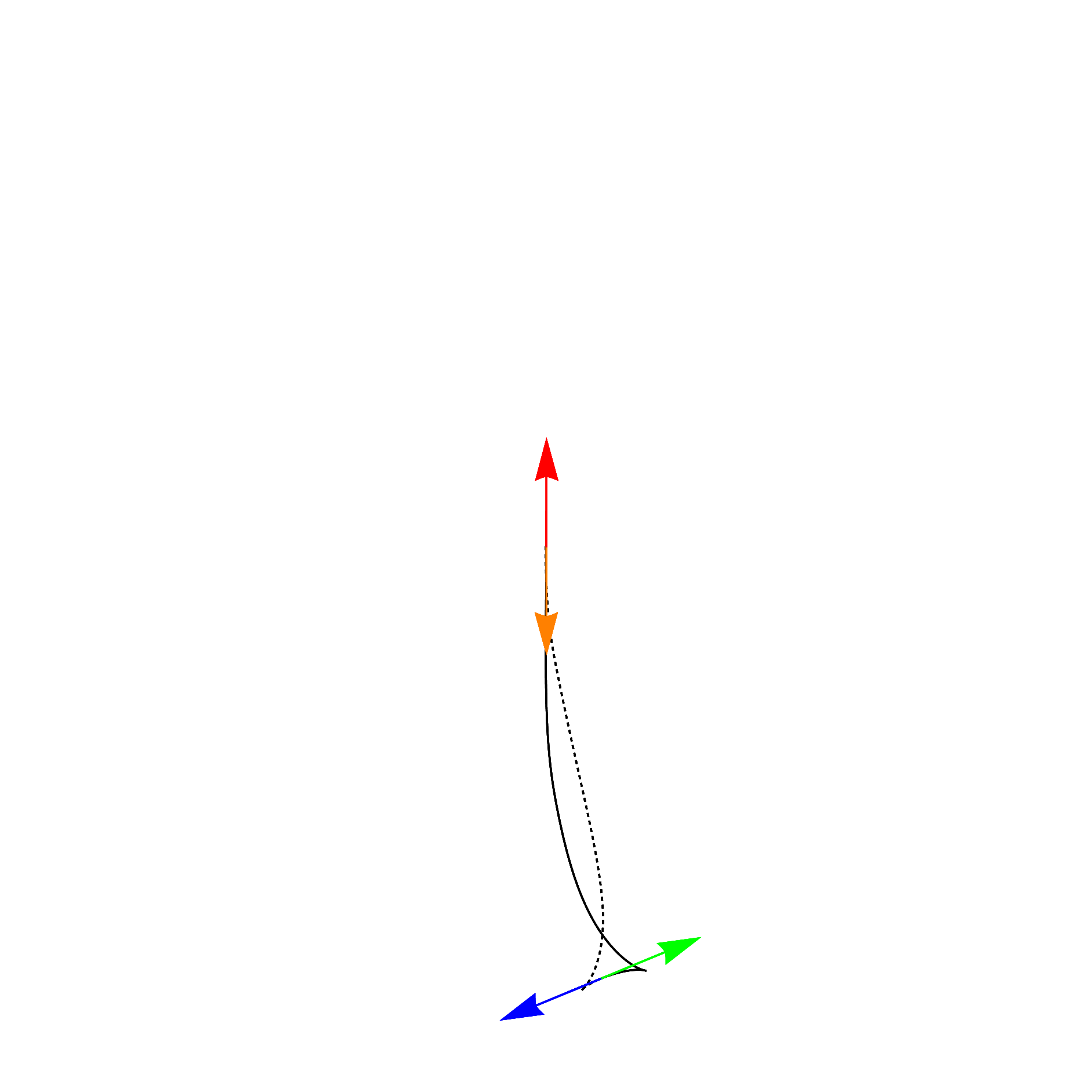}
    \caption{The two minimizing geodesics of the model $d_{\mathrm{proj}}$. See the right hand side of Eq.~ \eqref{dproj}.}
    \label{fig:not_inQ}
\end{subfigure}
\hfill~

\caption{Illustration of Theorem \ref{th:main} for case $[\bp_1] \notin \mathcal{Q}_\xi$. The geodesics over which are minimized in the models $d_{\mathrm{c}}$ and $d_{\mathrm{proj}}$ are all different. The geodesics of $d_{\mathrm{proj}}$ have cusps, whereas the geodesics of $d_{\mathrm{c}}$ do not have cusps. By constraining the model to be cusp-free, we have $d_{\mathrm{c}}>d_{\mathrm{proj}}$.}
\label{fig:monprojoutcone}
\end{figure*}

\subsection{Visualizing the Differences of the Models \texorpdfstring{\unboldmath $d_{\mathrm{c}}$ and $d_{\mathrm{proj}}$}{dc and dproj}}\label{section: visual dmon}
In our visual comparisons between $d_{\mathrm{c}}$ and $d_{\mathrm{proj}}$, recall Eq.~\eqref{dmon} and Eq.~\eqref{dproj}, we recognize 3 main observations that we list below. \\ \\
% \textbf{1. Different Minimization:} \\ %(even if $[\bp_1] \in \tilde{\mathcal{Q}}_{\xi}$)
% %\textbf{:} 
% In the model $d_{\mathrm{c}}$ we minimize over four geodesics, since the model $d_{\cF_0^+}$ is not inversion invariant, while in the projective case ($d_{\mathrm{proj}}$) we minimize over only two geodesics, as visualized in the Figures \ref{fig:monprojinQ} and \ref{fig:monprojoutcone}. 
% \\ \\
\textbf{1. Same Geodesic Cases:} \\
For $[\bp_1]\in\tilde{\mathcal{Q}}_\xi$ one has
$d_{\mathrm{c}}([\bp_0],[\bp_1])=d_{\mathrm{proj}}([\bp_0], [\bp_1])$, and the minimizing geodesics of the models coincide, by Thm.~\ref{th:main}. The non-minimizing geodesics differ. All geodesics are shown in Fig.~\ref{fig:monprojinQ} with coinciding geodesics in bold.  \\ \\
\textbf{2. Different Geodesic Cases:} \\
For $[\bp_1]\notin\mathcal{Q}_\xi$ one has that the underlying models of $d_{\mathrm{c}}$ and $d_{\mathrm{proj}}$ are different, ($d_{\cF_0^+}$ and $d_{\cF_0}$ respectively), so that all geodesics differ, cf. Fig.~\ref{fig:monprojoutcone}.
\\ \\
\textbf{3. Different Maxwell Sets: } \\
A Maxwell point of model $d_{\mathrm{c}}$ respectively $d_{\mathrm{proj}}$ is by definition a point $[\bp_1] \in \R^2 \times P^1$ such that there exist two or more distinct minimizing geodesics of the model $d_{\mathrm{c}}$ respectively $d_{\mathrm{proj}}$, with equal length connecting $[\bp_0]$ and $[\bp_1]$.
% Geodesics lose their optimality at Maxwell points.

% \begin{definition}[Maxwell set of $d_{\mathrm{c}}$ and $d_{\mathrm{proj}}$]\label{def: Maxwell set}
% The Maxwell sets of $d_{\mathrm{c}}$ and $d_{\mathrm{proj}}$ are given by
%     \begin{align*}
%         \mathcal{M}_{\mathrm{c}} := \Big\{ &[\bp] \in \R^2 \times P^1 \mid \exists \gamma^1 \neq \gamma^2 \in \Gamma_0^+([\bp_0], [\bp]),\\
%         & L(\gamma^1) = L(\gamma^2) = d_{\mathrm{c}}([\bp_0],[\bp]) \Big\}, \textrm{ and}   \\
%    \mathcal{M}_{\mathrm{p}}:=\Big\{ &[\bp] \in \R^2 \times P^1 \mid \exists \gamma^1 \neq \gamma^2 \in \Gamma_0([\bp_0], [\bp]),\\
%         & L(\gamma^1) = L(\gamma^2) = d_{\mathrm{proj}}([\bp_0],[\bp]) \Big\},
%     \end{align*}    
% respectively, with $\Gamma_0([\bp_0], [\bp_1])$ and $\Gamma_0^+([\bp_0], [\bp_1])$ as defined in Def.~\ref{def:Gamma} and length functional 
% \[L(\gamma) := \int_0^1 \cF_0(\gamma(t),\dot{\gamma}(t)) \mathrm{d}t.\]
% \end{definition}
In $\R^2 \times P^1$ we must keep track of the geodesic front departing from $\bp_0$ and from $\overline{\bp}_0$, as the origin is $[\bp_0]=\{\bp_0,\overline{\bp}_0\}$.
The fronts departing from $\bp_0$ are depicted in green in Fig.~\ref{fig:Maxwellmon} and in Fig.~\ref{fig:Maxwellproj}. The fronts departing from $\overline{\bp}_0$ are depicted in red ($\theta \geq 0$) and blue ($\theta < 0$), both in Fig.~\ref{fig:Maxwellmon} and in Fig.~\ref{fig:Maxwellproj}. The moment these optimal geodesic fronts meet each other, Maxwell-points are produced. They become visible as folds on the spheres. 

There are two main differences between the de Maxwell-set of $d_{\mathrm{c}}$ and $d_{\mathrm{proj}}$, namely, the size of the Maxwell-set is reduced when considering $d_{\mathrm{c}}$ instead of $d_{\mathrm{proj}}$, and the collision of the red and blue front takes place exactly at $R=\pi$. We will now explain these differences in more detail: 

\begin{itemize}
    \item \textbf{(Reduced Maxwell set)} \ %Note that 
    In the sphere of $d_{\mathrm{proj}}$, depicted in Fig.~\ref{fig:Maxwellproj}, there is a fold in the green front. This fold is part of the Maxwell set. 
    In the new model $d_{\mathrm{c}}$, we do not have that instability problem. This is because the vertical lines through the origin appearing in the folds of the green surfaces of the $d_{\mathrm{c}}$ spheres, depicted in Fig.~\ref{fig:Maxwellmon}, are not Maxwell-points. These points $(0,0,\theta_1)$ with $|\theta_1|<\pi/2$, are reached by a unique geodesic, namely the shortest in-place rotation. 
    \item \textbf{(Collision of fronts)} \ In the sphere of $d_{\mathrm{c}}$ the green, red, and blue fronts hit each other at the origin with radius $\pi$ (cf. Fig.~\ref{fig:Maxwellmon}), in contrast to $d_{\mathrm{proj}}$ where this occurs outside the origin at radius $R \approx\frac{10\pi}{9}$ (cf. Fig.~\ref{fig:Maxwellproj}).
\end{itemize}

\begin{figure*}
    \centering    
    \includegraphics[width=0.75\textwidth]{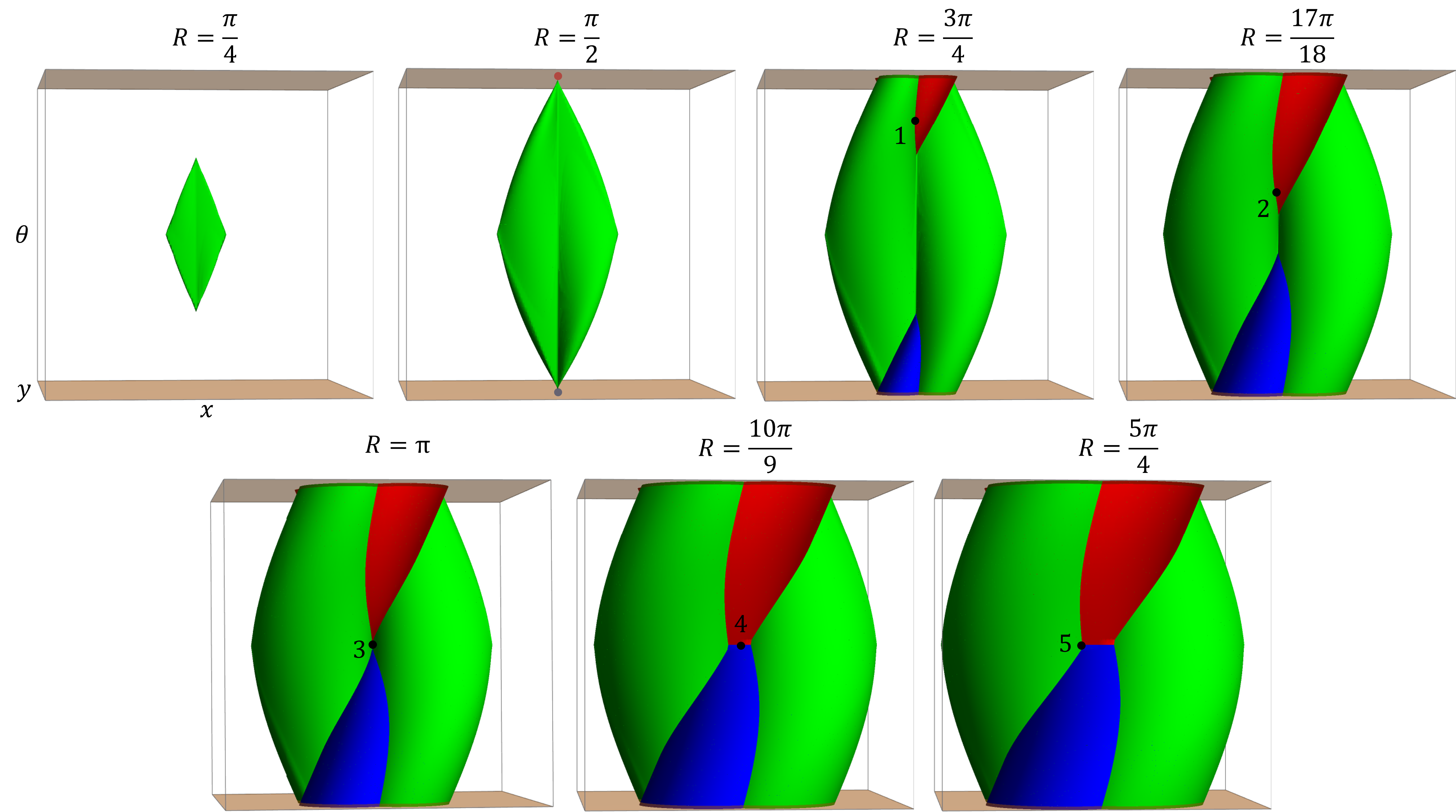}
    \textbf{\includegraphics[width=0.6\textwidth, trim={0 50 0 130}, clip]{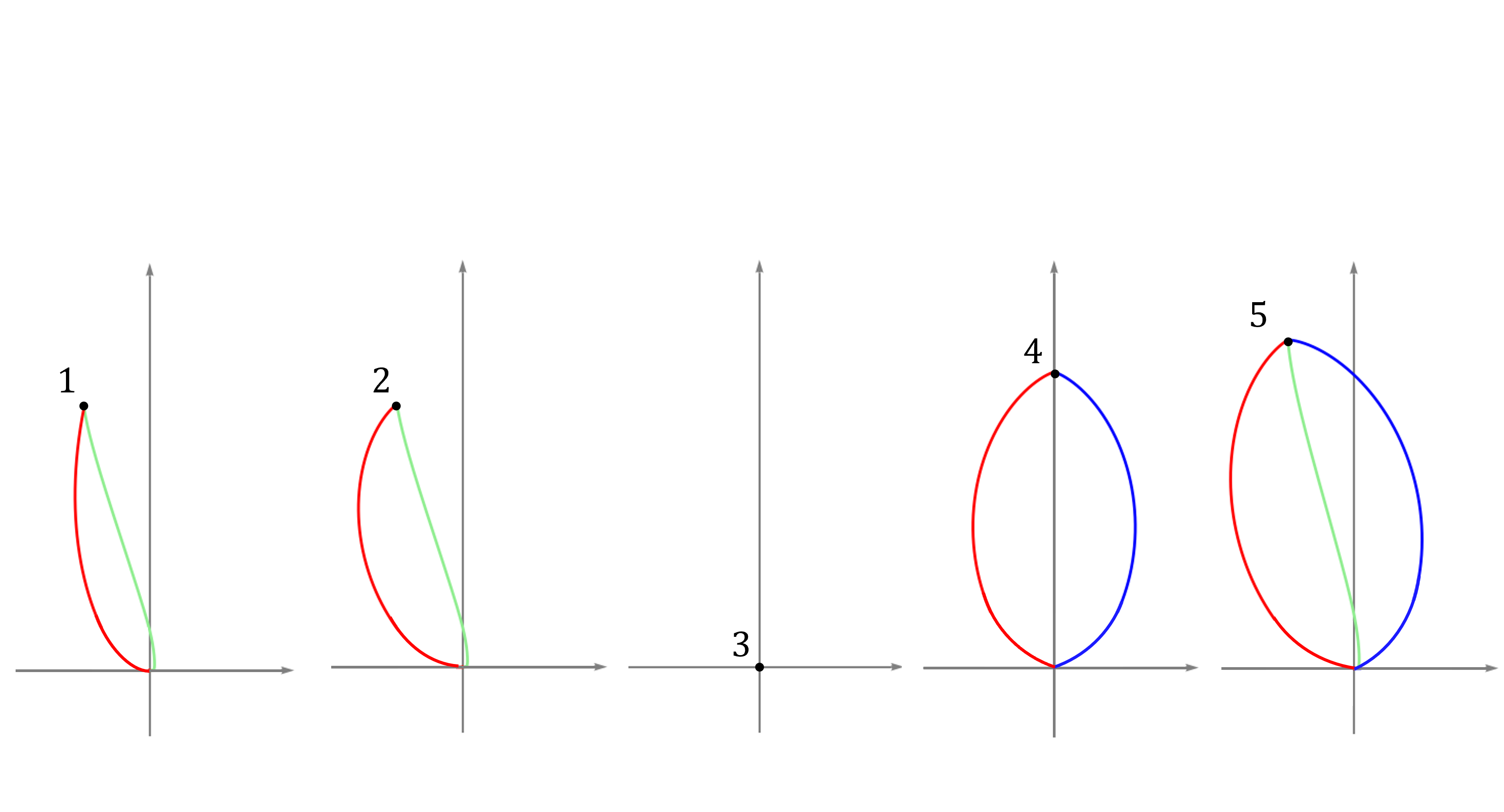}}
    \caption{
    Spheres of increasing radius $R$ around $[\bp_0]$ of the $d_{\mathrm{c}}$ model:
    the optimal fronts departing from $\bp_0=(0,0,0)$ (green), from $\overline{\bp}_0=(0,0,\pi)$ (red) and from $\overline{\bp}_0=(0,0,-\pi)$ (blue) together form the $d_{\mathrm{c}}$ sphere of radius $R$ plotted from $\theta \in [-\pi/2,\pi/2]$.
    The points where two or more colors meet are Maxwell points.
    In comparison to the $d_{\mathrm{proj}}$ model in Fig.~\ref{fig:Maxwellproj}, we see that for radii $R < \pi$ there are considerable differences, for large radii $R \geq \pi$ we see %more  
    similarity. Below we plot the set of multiple same-length geodesics ending up in the Maxwell point indicated by 1-5 (only 3 is not a Maxwell point, but a limit of Maxwell points).
    }   
    
    % In green we see the optimal fronts departing from $\bp_0=(0,0,0)$, in red the optimal fronts departing from $\overline{\bp}_0=(0,0,\pi)$ and in blue the optimal fronts departing from $\overline{\bp}_0=(0,0,-\pi)$ which together form the $d_{\mathrm{c}}$ sphere of radius $R$.

    % Intersection of the Maxwell-set of the $d_{\mathrm{c}}$ model with the $d_{\mathrm{c}}$ sphere 
    % % $\{[\bp]\in \R^2 \times P_1 \;|\; d_{\mathrm{c}}([\bp],[\bp_0])=R\}$ 
    % with increasing radius $R$, and with $\theta \in [-\pi/2,\pi/2]$. 
    % In green we see the optimal fronts departing from $\bp_0=(0,0,0)$, in red the optimal fronts departing from $\overline{\bp}_0=(0,0,\pi)$ and in blue the optimal fronts departing from $\overline{\bp}_0=(0,0,-\pi)$ which together form the $d_{\mathrm{c}}$ sphere of radius $R$.
    % In comparison to the $d_{\mathrm{proj}}$ model in Fig.~\ref{fig:Maxwellproj} we see that for small radii $R < \pi$ there are considerable differences, for large radii $R \geq \pi$ we see more  similarity. 
    \label{fig:Maxwellmon} 
    
\end{figure*}

\begin{figure*}
\centering
\includegraphics[width=0.75\hsize]{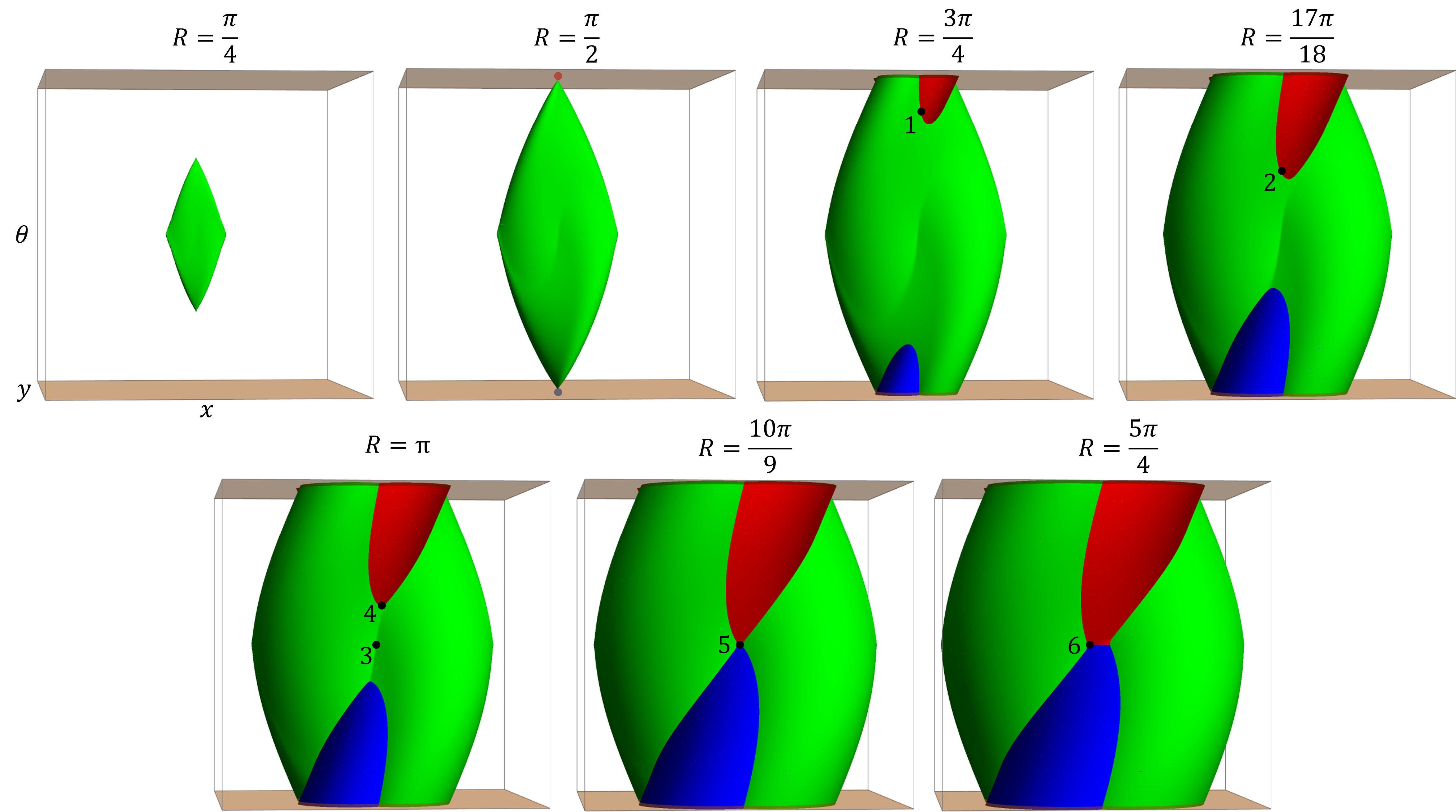}
\includegraphics[width=0.8\hsize]{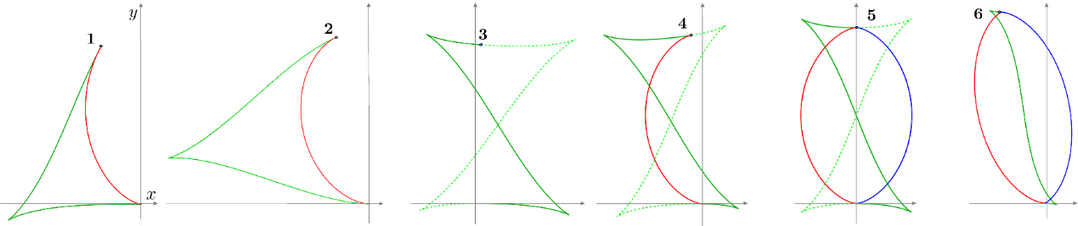}
\caption{
Spheres of increasing radius $R$ around $[\bp_0]$ of the $d_{\mathrm{proj}}$ model:
the optimal fronts departing from $\bp_0=(0,0,0)$ (green), from $\overline{\bp}_0=(0,0,\pi)$ (red) and from $\overline{\bp}_0=(0,0,-\pi)$ (blue) together form the $d_{\mathrm{proj}}$ sphere of radius $R$ plotted from $\theta \in [-\pi/2,\pi/2]$.
The points where two or more colors meet as well as the green fold are Maxwell points.
% In comparison to the $d_{\mathrm{c}}$ model in Fig.~\ref{fig:Maxwellmon} we see that for small radii $R < \pi$ there are considerable differences, for large radii $R \geq \pi$ we see more  similarity.
% Figure published previously in \cite{BekkersGSI} 
% depicting the progression of the intersection of the Maxwell set with increasing radii $R>0$ in $d_{\mathrm{proj}}$ (visible as folds). 
% We recall this figure (with permission) only for direct qualitative comparison purposes to Fig.~\ref{fig:Maxwellmon} where the progression of the Maxwell set is plotted of $d_{\mathrm{c}}$. 
% For the relation of model $d_{\mathrm{c}}$ to model $d_{\mathrm{proj}}$ and their different properties see Theorem~\ref{th:main}\label{fig:Maxwellproj}. 
% We see considerable differences for $R<\frac{\pi}{2}$ and rather similar for $R>\frac{\pi}{2}$. Parameter $\nu$ denotes the number of distinct geodesics departing from the origin $[\bp_0]$ meeting with the same length in the indicated points. 
% The critical radii $\bar{R}\approx \frac{17}{18}\pi$ and  $\tilde{R}\approx \frac{10}{9} \pi$ where resp. the front departing from $(0,0,\pm\pi)$ hits the local component of the Maxwell set, and where 4  geodesics with the same length meet are computed in \cite{BekkersGSI}.
Below we plot the set of multiple same-length geodesics ending up in the maxwell point indicated by the numbers 1-6.
Reproduced from Figs.~2 and 3 in \cite{BekkersGSI}.
}
\label{fig:Maxwellproj}
\end{figure*}

\section{Connected-Component- Informed Cost Function}\label{Section: connected-component-informed cost function}
%The connected component algorithm of \cite{berg2024connectedcomponentsliegroups} is a combination of a line-segmentation method and a grouping algorithm

\subsection{Cost Function \texorpdfstring{\unboldmath $\mathcal{C}$}{C} for the Data-Driven \texorpdfstring{\unboldmath $d_{\mathrm{c}}$}{dc} model}
To incorporate image information into the geodesic tracking model $d_{\mathrm{c}}$, we introduce a cost function $\mathcal{C}(\cdot)$, see Eq.~\eqref{Eq:CostFunction}. This means that whenever $\mathcal{C} \neq 1$, the model becomes data-driven. In this work, we employ the model $d_{\mathrm{c}}$ to track the edges in the image. The cost function $\mathcal{C}$ should be low at the edges and high everywhere else. We therefore define the data term $\mathcal{C}: \R^2 \times S^1 \rightarrow [\delta, 1]$ as follows:
\begin{align}\label{Eq:CostFunction}
    \mathcal{C}(\bp) := \frac{1}{1+\lambda|\mathcal{V}(\bp)|^p}
\end{align}
with $\lambda, p > 0$ and $0<\delta := 1 / (1 + \lambda)<1$. The edge detector $\mathcal{V}: \R^2\times S^1 \rightarrow [0, 1]$ measures the presence of edges, and is given by the crossing-preserving line filter \cite{BekkersGSI, BekkersSIAM, berg2024connectedcomponentsliegroups} applied to the modulus of the imaginary part of the orientation score \eqref{def: orientation score}.  

\subsection{The Connected-Component- Informed Cost Function \texorpdfstring{\unboldmath $\mathcal{C}_{\mathrm{new}}$}{Cnew}}
Next, we introduce a grouping strategy for the cost function $\mathcal{C}$, informed by the connected component algorithm of \cite{berg2024connectedcomponentsliegroups}, to reduce the risk of a geodesic switching between the edges of distinct components. 
This motivates the introduction of a new cost function, $\mathcal{C}_{\mathrm{new}}$, as described below.

The connected component algorithm groups elements into $n$ equivalence classes $\{K_i\}_{i = 1}^n$, as defined in \cite[Def.~5]{berg2024connectedcomponentsliegroups}, based on a left-invariant Riemannian distance with $i \leq n$ and $n$ being the number of connected components. The domain of the cost function will be grouped into new, expanded, equivalence classes $\tilde{K}_i$ based on the shortest distance to a connected component.

Specifically, $\bp \in \R^2 \times S^1$ belongs to the expanded equivalence class $\tilde{K}_i$, if it is closest to the connected component $K_i$, indexed by $i \in \{1, \ldots, n\}$:
\begin{equation}\label{Eq: equivalence relation}
\bp \in \tilde{K}_i \Leftrightarrow
    \left\{ \begin{array}{ll}
        & \forall_{ j  \in \{1, \ldots, n\}}\\
        &d(\bp, K_i) \leq d(\bp, K_j)
    \end{array}\right\},
\end{equation}
where $d(\bp, K_j) = \inf_{\bq\in K_j} d_{\mathcal{G}}(\bp, \bq)$ for the Riemannian distance $d_{\mathcal{G}}$, as given in \eqref{Eq: Riemannian distance}. The metric parameters $g_{11},g_{22}$ and $g_{33}$ \eqref{Eq: sub-Riemannian metric tensor} define the shape of the kernel. 
%and therefore exert a strong influence on the outcome of the grouping process.

For each equivalence class $\tilde{K}_i$, we can then define the corresponding grouped cost function:
\begin{equation}\label{Eq:CostFunctionC_K}
\mathcal{C}_{\tilde{K}_i}(\bp) =
\left\{
\begin{array}{ll}
\mathcal{C}(\bp) &\textrm{ if } \bp \in \tilde{K}_i,  \\
1 & \textrm{else}.
\end{array}
\right.
\end{equation}

Then the final connected-component-informed cost function is defined by
\begin{equation} \label{CNEW}
\mathcal{C}_{\mathrm{new}}(\bp, \bp_1)
=
\left\{
\begin{array}{ll}
\mathcal{C}_{\tilde{K}_1}(\bp) &\textrm{if }\bp_1 \in \tilde{K}_1, \\
\vdots & \vdots \\
\mathcal{C}_{\tilde{K}_n}(\bp) &\textrm{if }\bp_1 \in \tilde{K}_n.
\end{array}
\right.
\end{equation}
Thereby, the connected component of the end point $\bp_1$ determines which (connected-component-informed) cost function to choose in the steepest descent backtracking \eqref{eq:backtracking}. 

% The to be grouped set $J$ is a binarized version of the cost function $\mathcal{C}$;
% \begin{equation*}
%     J:=\{\bp\in \R^2 \times S^1 \;| \; \mathcal{C}(\bp) \leq T \}.
% \end{equation*}
% Consistent with the approach of the connected component algorithm, the grouping is performed by dilation operations on $\R^2 \times S^1$. Suppose that the connected component algorithm produces $K \geq 1$ components. Every connected component is given by an equivalence class $[\bp_{k}]$, as defined in \cite[Def.~5]{berg2024connectedcomponentsliegroups}, with $k\leq K$. Its corresponding binary image is given by;
% \begin{align*}
%     U_k(\bp) &= \mathbbm{1}_{[\bp_{k}]}(\bp) =  \begin{cases}
%         1  &\text{if }\bp\in [\bp_{k}]\\
%         0 &\text{else}
%     \end{cases}
% \end{align*}

\subsubsection{Algorithm}

The (grouping) procedure 
to compute each $\mathcal{C}_{\tilde{K}_i}$ for $\mathcal{C}_{\mathrm{new}}$ \eqref{CNEW}
is given in Algorithm~\ref{Algorithm: grouping of C}, and relies on morphological dilations with Riemannian-distance-$d_{\mathcal{G}}$-based kernels, as explained in Appendix~\ref{App:C}.

This algorithm relies on a morphological dilation (denoted by $\square$) Eq. \eqref{eq:dil} using a kernel $k_\tau$ of size $\tau$, as defined in Eq.~\eqref{Eq: kernel dilation}. The metric parameters $g_{11},g_{22}$ and $g_{33}$ \eqref{Eq: sub-Riemannian metric tensor} define the shape of the kernel $k_\tau$. As the scaling of $\tau>0$ can be absorbed by the scaling of $\{g_{ii}\}$, we set $\tau=1$ and $k_\tau=k_1$ in the algorithm.

\begin{algorithm}
\small
\caption{The Connected-Component-Informed Cost Function $\mathcal{C}_{\tilde{K}_i}$}\label{Algorithm: grouping of C}
\begin{algorithmic}[1]
\Require $n \geq 1$
\Procedure{$\mathbf{Group}$}{$\mathcal{C},\{K_i\}_i^n, k_1$}
\State $I \gets \; \{\bp \in \R^2 \times S^1 \mid \mathcal{C}(\bp) \leq 0.1 \}$\Comment{initialize}
\For{$i=1$ to $n$} 
    \State $I_{\tilde{K}_i} \gets \; \emptyset$ 
    \State $U_{\tilde{K}_i} \gets \indbb{K_i}$\;
\EndFor    
\While{$I \neq \emptyset$}\Comment{the grouping of set $I$}
    \For{$i=1$ to $n$}
        \State $W_{\tilde{K}_i} \gets \; -(k_1\square-U_{\tilde{K}_i})$   
        \State $I_{\tilde{K}_i} \gets I_{\tilde{K}_i} \cup \left(\mathrm{supp}(W_{\tilde{K}_i}) \cap I\right) $
        \State $U_{\tilde{K}_i} \gets W_{\tilde{K}_i}$
    \EndFor
    \State $I \gets I \setminus \bigcup_{i=1}^n I_{\tilde{K}_i}$
\EndWhile
\For{$i=1$ to $n$}  \Comment{defining $\mathcal{C}_{\tilde{K}_i}$} 
    \State $\mathcal{C}_{\tilde{K}_i} \gets -(k_1\square-\indbb{ \{I_{\tilde{K}_i}\}})\cdot \mathcal{C}$
    \State \textbf{return} $\mathcal{C}_{\tilde{K}_i}$
\EndFor
\EndProcedure
\Statex
\end{algorithmic}
  \vspace{-0.3cm}%
\end{algorithm}

\section{Automatic Switch from Geodesics to Spatial Snakes}\label{section: switching criterion}
Snake modeling is an iterative procedure in which an initial contour, derived from the connected component algorithm of \cite{berg2024connectedcomponentsliegroups}, is progressively adapted to align with the edges of the image.
In our snake model, two different models are used to refine the initial placement of the contour, as illustrated in the flow diagram of Fig.~\ref{fig: flowchart}: (1) the accurate geodesic tracking model $d_{\mathrm{c}}$ defined on the projective line bundle $\R^2 \times P^1$ (see Step 4a of Fig.~\ref{fig: flowchart}) and (2) a fast spatial snake model (Step 4b).

The geodesic tracking  model tracks the edges in the higher-dimensional space $\R^2 \times P^1$. However, processing the lifted data in $\R^2 \times P^1$ is inherently more computationally demanding than restricting operations to $\R^2$. To address this issue, we aim to reduce the computations in the higher-dimensional space to those strictly necessary, thereby improving efficiency while minimizing accuracy loss.

\subsection{Spatial Snakes} %: Edge Refinement}
As the name suggests, the spatial snake model refines the initial contour only spatially and \emph{not} angularly. Here, we propose to use an existing edge detection model in $\R^2$, applied to the real part of a fixed orientation layer $\mathrm{Re}( U_f(\cdot, \theta) ): \R^2 \to \mathbb{\R}$ of the orientation score $U_f: \SE(2) \to \mathbb{C}$ given by \eqref{def: orientation score}.

We will be using the following edge detection model: the norm of the Gaussian gradient $G_{\mathrm{norm}}(\cdot): \mathbb{L}_2(\R^2) \rightarrow \mathbb{L}_2(\R^2)$ given by
$$G_{\mathrm{norm}}(\cdot):=\| \nabla(G_s \ast \cdot)\|,$$
where $G_s$ is the Gaussian function with scale parameter $s \in \R_{>0}$. The selection of the scale $s>0$ is automatic, where we first follow Lindeberg's scale selection principle \cite{lindeberg1998edge}, and then apply the subsequent standard edge-focusing method of \cite{BartTeHaarRomenyFront}.

Let $\gamma = (\bx(\cdot), \bn(\cdot)) \in \Gamma(\bp_0,\bp_0)$ be an initial contour for the spatial snake model. The location $\bx(t)$ of the edge at time $t$ is refined in the direction perpendicular to $\bn(t)=\left(\cos(\theta(t)),\sin(\theta(t)\right)$, given by  $\hat\bn(t)=(-\sin(\theta(t)),\cos(\theta(t))$. We update by: %defined as:
\begin{equation*}
    \bx_{\mathrm{new}}(t):= \bx(t) +  \lambda^{\mathrm{loc}}(t) \hat\bn(t),
\end{equation*}
where the scalar displacement $\lambda^{\mathrm{loc}}(t)$ is given by
\begin{align*}
    \lambda^{\mathrm{loc}}(t) :=\underset{\lambda \in \R}{\operatorname{argmax}}^{*} \; \left(G_{\mathrm{norm}}(F)\right) \left(\bx(t)+\lambda \hat\bn(t) \right).
\end{align*}
with fixed orientation layer $F(\cdot)=\textrm{Re}(U_f(\cdot, \theta(t))): \R^2 \to \R$. Here $\textrm{argmax}^{*}$ takes the $\lambda = \lambda^{\mathrm{loc}}(t)$ corresponding to the local maximum of the spatial gradient norm of $F$ along the scan-line $\textrm{span}\{\hat \bn(t)\}$ that is closest to $\bx(t)$.  

\begin{figure*}
\centering
    \begin{subfigure}[T]
    {0.35\linewidth}
        \centering
        \includegraphics[width=0.8\linewidth, trim={0 11 0 10}, clip]{images/ChipWaferASML.png}
        \caption{}
    \end{subfigure}
    \begin{subfigure}[T]{0.35\linewidth}
        \centering
        \includegraphics[width=0.8\linewidth, angle=-90, trim={100 55 100 55}, clip]{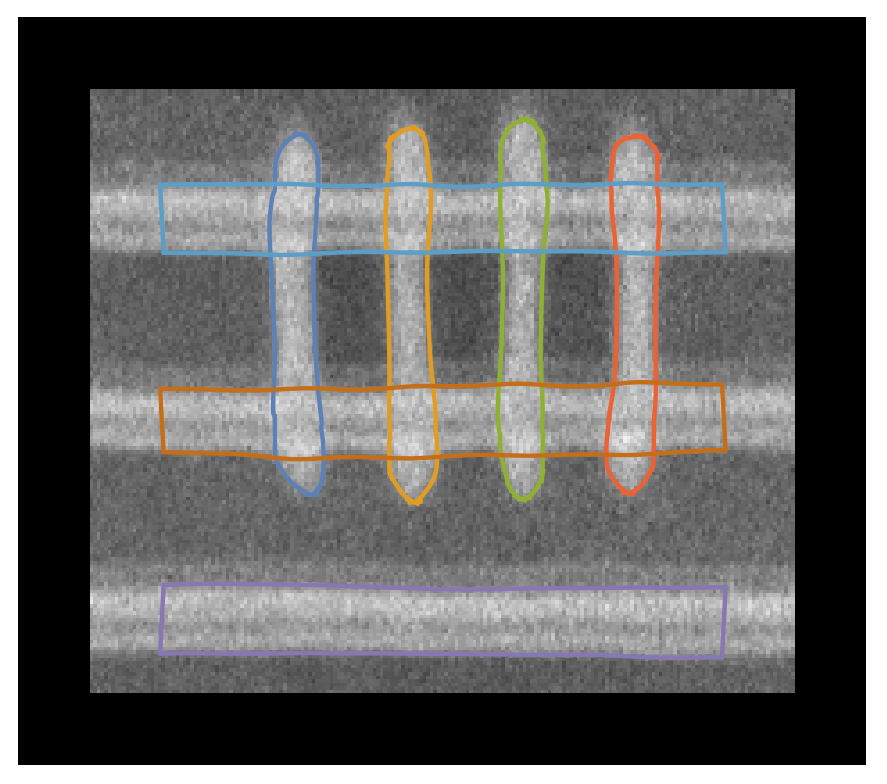}
        \caption{}
    \end{subfigure}
    \caption{Result of the proposed algorithm applied to an SEM image of FinFET processing step. The contour of each component is indicated with a distinct color.}
    \label{Fig:ResultsFinFET}
\end{figure*}

\subsection{Switching Criterion \label{ch:sc}}
By construction of the orientation score transform \eqref{def: orientation score}, edges in $\R^2$ are lifted to horizontal curves in $\R^2 \times S^1$. 
For the definition of a horizontal curve, see Def.~\ref{def:Gamma}. 
As a result, the initial contour $\gamma(t) = (\bx(t), \bn(t)) \in \Gamma(\bp_0, \bp_0)$ at time $t$ will only be located in the same orientation layer as the lifted edge, when the contour is locally horizontal. 
If this is indeed the case, it is meaningful to refine the contour in the direction $\hat{\bn}(t)$ on the corresponding $\theta(t)$-layer. 
In all other cases, an angular displacement is required, in which case we rely on the geodesic tracking model.

This observation motivates the following switching criterion: the model switches to the spatial snake model whenever the initial curve is nearly horizontal. Formally, this condition holds if the `deviation from horizontality' \cite[Part II]{DuitsAMS} given by $\phi(t) \in C([0,1],[0,\pi])$ of the initial curve $\gamma(t)\in \Gamma(\bp_0,\bp_0)$ satisfies
\begin{equation}\label{Eq:switching criterion}
\phi(t):=\arccos{\frac{\ul{\dot{x}}(t)\cdot \bn(t)}{\|\ul{\dot{x}}(t)\|}} \leq \alpha,
\end{equation}
for a threshold $\alpha \in (0,\pi]$.

\subsubsection{Contour Completion}
The initial contour, obtained from the connected component algorithm of \cite{berg2024connectedcomponentsliegroups}, generally forms a closed continuously differentiable curve in $\R^2 \times S^1$ (for the appropriate settings). Consequently, the switching criterion $\phi(t) - \alpha$ in Eq.~\eqref{Eq:switching criterion} alternates between a nonpositive and a positive sign an even number of times. 

Suppose that the sign switches at times $t_0, t_1 \in [0,1]$, and suppose that $\phi(t) \leq \alpha$ for the interval $t \in [t_0,t_1]$. In this situation, the placement of the initial contour will be refined using spatial snakes for $t \in [t_0,t_1]$. The rest of the contour can be completed with the geodesic tracking model using $\gamma_{\mathrm{new}}(t_0)=(\bx_{\mathrm{new}}(t_0),\bn(t_0))$ as the sink $\bp_0$ and $\gamma_{\mathrm{new}}(t_1)=(\bx_{\mathrm{new}}(t_1),\bn(t_1))$ as the source $\bp_1$.

\section{Experiments}\label{section: experiment}
In this section, we evaluate the performance of our snake model on the projective line bundle qualitatively and quantitatively using both experimental and synthetic SEM images. We begin by describing the experimental setup, including the parameter settings of the algorithm. Subsequently, we present the results obtained by applying the proposed algorithm to the experimental and synthetic data, compared to the previous tracking model on the projective line bundle \cite{BekkersGSI}.

\subsection{Experimental Setup}\label{section: experiment setup}
The steps of the algorithm are visualized in Fig.~\ref{fig: flowchart}. The first two steps are the connected component algorithm of \cite{berg2024connectedcomponentsliegroups}. 
The connected component algorithm groups elements into $n$ equivalence classes $\{K_i\}_{i = 1}^n$, based on a left-invariant Riemannian distance. 
%with %metric parameters 
%$\{g_{11}, %g_{22}, g_{33}\} %= \{0.3, 0.8, %5\}$. \eqref{metric}

The connected component algorithm starts by lifting the data to $\R^2 \times S^1$ using the orientation score transform. The orientation score transform includes standard cake wavelet settings: we used $N_o=12$ orientations with overlap factor $o=4$ so that each wavelet covers a $\frac{2\pi}{12}$ cone. % in the Fourier domain. 
%We have set: $N_o=12$ and $o=4$.
%Additionally, there is a crossing-preserving line filter, which has a number of parameters. 
%The metric parameters of the line filter are given by $\{g_{11}, g_{22}, g_{33}\} = \{1, 1, 1\}$. 
The crossing-preserving line filter detects lines at spatial and angular scales $\sigma_s = \frac{3}{4}\cdot w$ and $\sigma_a=2\cdot \frac{2\pi}{48}$, with $w$ the structure's width based on an isotropic metric. We have set $\lambda=50$ and $p=3$ in \eqref{Eq:CostFunction}.

The initial contour for the snake model is derived from the output of the connected component algorithm, see Appendix \ref{app:initial_contour}.

The flowchart includes two edge refinement methods: a fast spatial snake model and an accurate geodesic tracking model (Step 4a and 4b of Fig.~\ref{fig: flowchart}, respectively). 
The algorithm automatically switches from the spatial snake model to the geodesic tracking model when the initial contour deviates from being horizontal, that is, when the deviation exceeds the angle $\alpha= 3 \cdot \frac{2 \pi}{48}$. 
The switch is visualized in Step 3 of Fig.~\ref{fig: flowchart}. The parts that are updated by the geodesic tracking model are colored red. The remaining parts (blue parts) are refined by the spatial snake model, as visualized in Step 4b. 

The cost function $\mathcal{C}$ \eqref{Eq:CostFunction} of the geodesic tracking model is defined by the modulus of the imaginary part of the orientation score with a crossing-preserving line filter. For the orientation score transform, we used the same settings as for the connected component algorithm. For the line filter, we used the settings: $\sigma_s=2$, $\sigma_a=4\cdot \frac{2\pi}{48}$, $\lambda=100$, $p=1$.
%This crossing-preserving line filter includes several parameters. %The metric parameters of the line filter are given by $\{g_{11}, g_{22}, g_{33}\} = \{1, 1, 1\}$. 

The grouping of the cost function $\mathcal{C}_{\mathrm{new}}$ is visualized in Step 4a of Fig.~\ref{fig: flowchart}, where every group has a different color.
The cost function $\mathcal{C}$ is grouped by Algorithm~\ref{Algorithm: grouping of C} with %metric parameters 
$\{g_{11},g_{22},g_{33}\}=\{0.2,1,7\}$. 
For the geodesic tracking model \eqref{Eq: finsler function}, we must set $\xi \approx \sqrt{g_{11}/g_{33}}$ and $\mathcal{C}=\mathcal{C}_{\mathrm{new}}(\cdot, \bp_1)$.

\subsection{Experiment 1 %(Experimental Data)
}
For the first experiment, we applied the algorithm to an experimental SEM image of a FinFET processing step, as shown in Fig.~\ref{fig: SEM example}. The width $w$ of the structures is approximately 16 pixels. Since the data are experimental, the results are presented qualitatively as figures.

\subsubsection{Results}
The results of Experiment 1 are shown in Fig.~\ref{Fig:ResultsFinFET}. By lifting the image to $\R^2 \times S^1$, we can successfully detect the edge-boundaries of overlapping structures.
Fig.~\ref{Fig:ResultsFinFET} further demonstrates that only a spatial refinement suffices, where the initial contour deviates slightly from being horizontal (the blue parts in Step 3 of Fig.~\ref{fig: flowchart}). 
The geodesic tracking model manages to refine the remaining parts (the red parts in Step 3 of Fig.~\ref{fig: flowchart}).

% \subsection{Experiment 2}
% For Experiment 2, we applied the algorithm to a synthetic SEM image of a processing step of a DRAM and the resulting contours are presented in a figure.

% \subsubsection{Experiment Setup}
% For the connected component algorithm, we used the following settings. 
% For the orientation score transform, 48 orientations were used with an overlap factor of 1. We reduced the overlap factor for a better separation between the orientation layers, as the angle between horizontal and diagonal components is smaller than in Experiment 1.
% For the line filter, the parameters were set to $\sigma_a = 1 \cdot \frac{2\pi}{48}$, $\sigma_s = \frac{3w}{4}$, $\lambda = 50$, and $p = 2$, where the width $w$ of the components is approximately 4 pixels.

% For the line filter of the cost function $\mathcal{C}$ as given in \eqref{Eq:CostFunction}, we used the following settings. The parameters were set to: $\sigma_a=2*\frac{2\pi}{48}, \sigma_s=2, \lambda=100, p=1$.

% \subsubsection{Results}
% Fig.~\ref{Fig:ResultsDRAM}

% \begin{figure}
%     \begin{subfigure}[T]{0.45\linewidth}
%         \centering
%         % \includegraphics[width=\linewidth, trim={0 11 0 10}, clip]{images/}
%     \end{subfigure}
%     \begin{subfigure}[T]{0.45\linewidth}
%         \centering
%         % \includegraphics[width=\linewidth]{images/}
%     \end{subfigure}
%     \caption{Result of the proposed algorithm applied to an synthetic SEM image of processing step of a DRAM. The contour of each component is indicated with a distinct color.}
%     \label{Fig:ResultsDRAM}
% \end{figure}
\begin{figure}[t]
        \centering
        \includegraphics[width=0.7\linewidth]{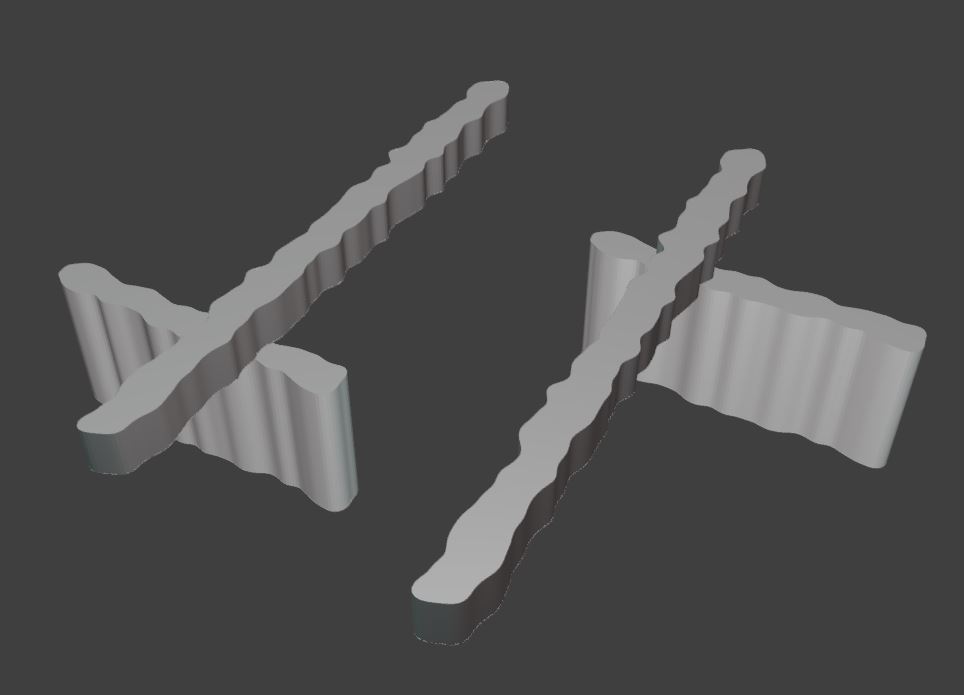}
        \caption{Example of geometry used for the generation of the synthetic data set with Nebula.}
        \label{fig:synthetic_SEM_geometry}
\end{figure}

\subsection{Experiment 2} 
%(Synthetic Data)}
For Experiment 2, we generated a data set consisting of 12 images using the Monte Carlo Simulator Nebula \cite{van2020nebula}. 
Nebula requires a 3D geometry as input. 
The geometry used in our experiments contains two shorter lines at the bottom and two longer lines at the top.
The width of the electronic structures in nanometers (nm) in the data set varies by $w = \{8, 12, 16\}$. 
The height of the top layer strongly influences the visibility of the bottom layer: a thicker top layer results in reduced visibility of the structures below. 
In the data set, the height of the top layer in nanometers varies by $h = \{5, 15, 25, 40 \}$. To make the data more realistic and the contouring task less trivial, roughness is added to the edges of the electronic structures. 
An example of the input geometry for Nebula is shown in Fig.~\ref{fig:synthetic_SEM_geometry}: 
the goal is to find the contours of this geometry. 

For the simulation of the SEM images, we used a 1:1 scale, which means that one nanometer corresponds to one pixel.
The corresponding SEM image and its ground-truth contouring, derived from the input geometry for Nebula (Fig.~\ref{fig:synthetic_SEM_geometry}), are shown in Fig.~\ref{fig: Example synthetic SEM}.

In Experiment 2, we applied our algorithm to the data set and compared its performance with that of the previous geodesic tracking model of \cite{BekkersGSI}. To be precise, we replaced only the geodesic tracking model in Step 4a of Fig.~\ref{fig: flowchart} with the method of \cite{BekkersGSI}, which includes the model $d_{\mathrm{proj}}$ and the (ungrouped) cost function $\mathcal{C}$. 
The rest of the pipeline, shown in Fig.~\ref{fig: flowchart}, remained the same.
The results of this experiment are presented both quantitatively and qualitatively, in the form of plots and figures.

\begin{figure}[t]
\centering
\hfill
    \begin{subfigure}[T]{0.45\linewidth}
        \centering
        \includegraphics[width=\linewidth]{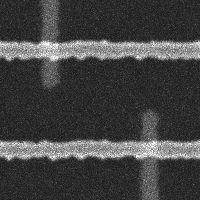}
        \caption{Example of synthetic SEM image with overlapping structures.}
        \label{fig:synthetic_SEM}
    \end{subfigure}
    \hfill
    \begin{subfigure}[T]{0.45\linewidth}
        \centering
        \includegraphics[width=\linewidth]{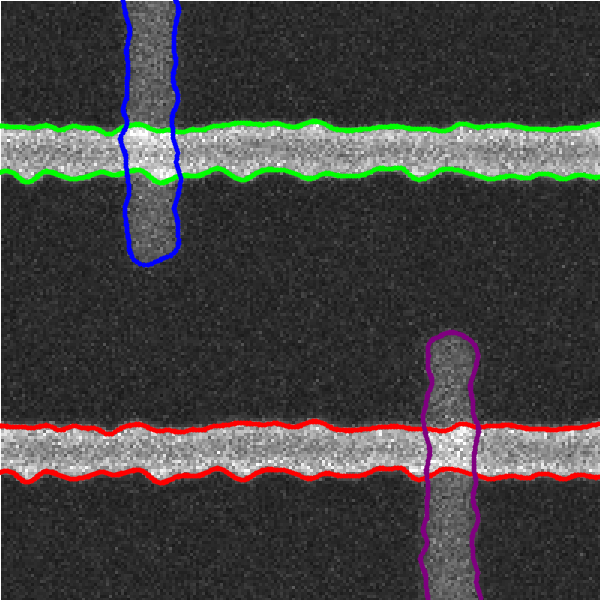}
        \caption{Visualization of the ground-truth contouring of the four different structures, indicated by different colors.}
        \label{fig:synthetic_sem_gt_segmentation}
    \end{subfigure}
    \hfill~
    \caption{Example of synthetic SEM image of the data set generated with Nebula and its ground-truth contouring.}
    \label{fig: Example synthetic SEM}
\end{figure}

\begin{comment}
{\color{red} \subsubsection{Data preparation}
In Experiment 2, we first preprocessed the data by taking the logarithm of the image. Subsequently, an additional normalization step was introduced. The images were lifted to $\R^2 \times S^1$ with the orientation score transform. Then followed by an $L_1$-normalization on each orientation layer. The image was projected by summing over the orientation layers. This sequence was executed twice before performing the final lift. These preprocessing steps effectively compensate for the low intensity of the components against the dark background, enhancing the visibility of the relevant structures.}
\end{comment}

\subsubsection{Evaluation Metrics}
In this section, we will describe the two metrics used for the analysis of the second experiment. 

Let $\gamma_{\mathrm{out}}$ be the spatial projection of the output of the algorithm; then $\gamma_{\mathrm{out}}$ is a closed curve in $\R^2$. Let $\gamma_{\mathrm{gt}}$ be the closed curve in $\R^2$ representing the ground-truth contouring. 
In order to measure the accuracy of the method, we use the following two metrics:
\begin{enumerate}
    \item the Mean Average Surface Distance between $\gamma_{\mathrm{out}}$ and $\gamma_{\mathrm{gt}}$: 
    \begin{align}\label{Eq:MASD}
    \begin{split}
        d_{\textrm{MASD}}&(\gamma_{\mathrm{out}},\gamma_{\mathrm{gt}}):=\\
        \frac{1}{2} \Big( &\frac{1}{L(\gamma_{\mathrm{out}})}\int \limits_0^{L(\gamma_{\mathrm{out}})}d(\gamma_{\mathrm{out}}(s),\Gamma_{\mathrm{gt}})\; \mathrm{d}s +\\
        &\frac{1}{L(\gamma_{\mathrm{gt}})}\int \limits_0^{L(\gamma_{\mathrm{gt}})} d(\gamma_{\mathrm{gt}}(s),\Gamma_{\mathrm{out}})\mathrm{d}s\Big),
    \end{split}
    \end{align}
     with $L(\gamma)$ the length of the \emph{spatial} curve $\gamma \in \{\gamma_{out},\gamma_{gt}\}$, and $s$ spatial arc length.   
    \item the Hausdorff distance between $\gamma_{\mathrm{out}}$ and $\gamma_{\mathrm{gt}}$:
    \begin{align}\label{Eq:Hausdorff}
        \begin{split}
            &d_{\textrm{HD}}(\gamma_{\mathrm{out}},\gamma_{\mathrm{gt}}):=\\
            &\max \{ \sup_{x\in \Gamma_{\mathrm{out}}}d(x,\Gamma_{\mathrm{gt}}),\sup_{y\in \Gamma_{\mathrm{gt}}}d(y,\Gamma_{\mathrm{out}}) \}.
        \end{split}
    \end{align}
 
\end{enumerate}
Here, the curves $\gamma_{\mathrm{gt}}$ and $\gamma_{\mathrm{out}}$ have been parameterized using the arc length. The sets $\Gamma_{\mathrm{gt}}=\{\gamma_{\mathrm{gt}}(s)\mid s \in [0,L(\gamma_{\mathrm{out}})]\}$ and 
$\Gamma_{\mathrm{out}}=\{\gamma_{\mathrm{out}}(s) \mid s \in [0,L(\gamma_{\mathrm{out}})]\}$ are associated with the curves $\gamma_{\mathrm{gt}}, \gamma_{\mathrm{out}}$.
Finally, $d(x,Y)=\inf_{y \in Y} d(x,y)$, with the Euclidean distance $d(x,y)=\|x-y\|$ in $\R^2$.    

\begin{figure*}
\centering
    \begin{subfigure}[T]{0.3\linewidth}
        \centering
        \includegraphics[trim={60 60 60 60}, clip, angle=-90, width=\linewidth]{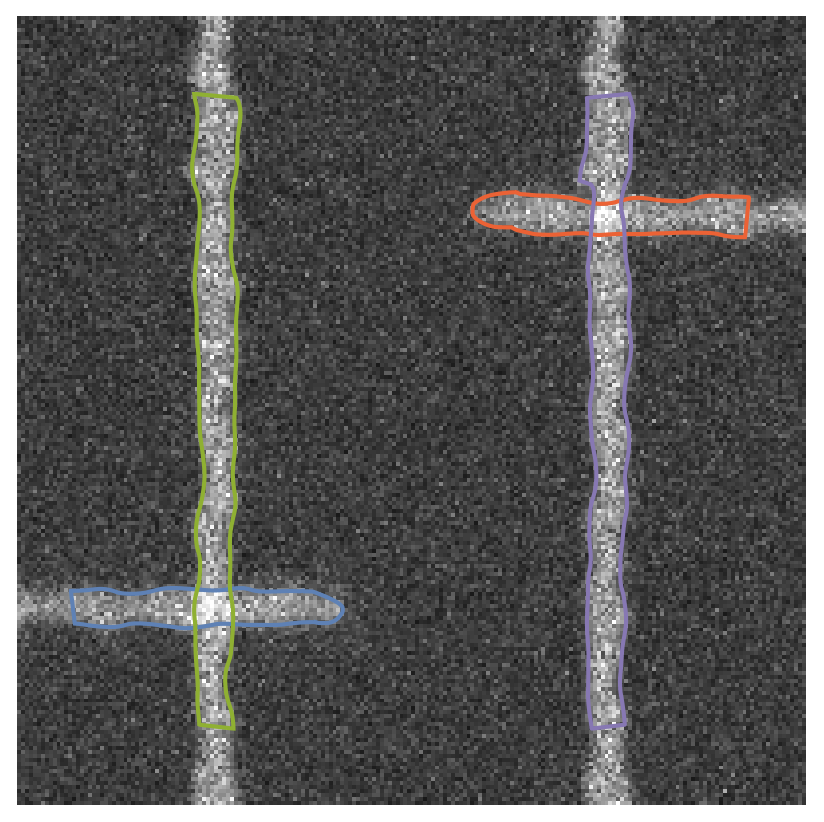}
        \caption{}
        \label{fig: subfig qualitative_results cd 8}
    \end{subfigure} 
\hfill
    \begin{subfigure}[T]{0.3\linewidth}
        \centering
        \includegraphics[trim={75 75 75 75}, clip, angle=-90, width=\linewidth]{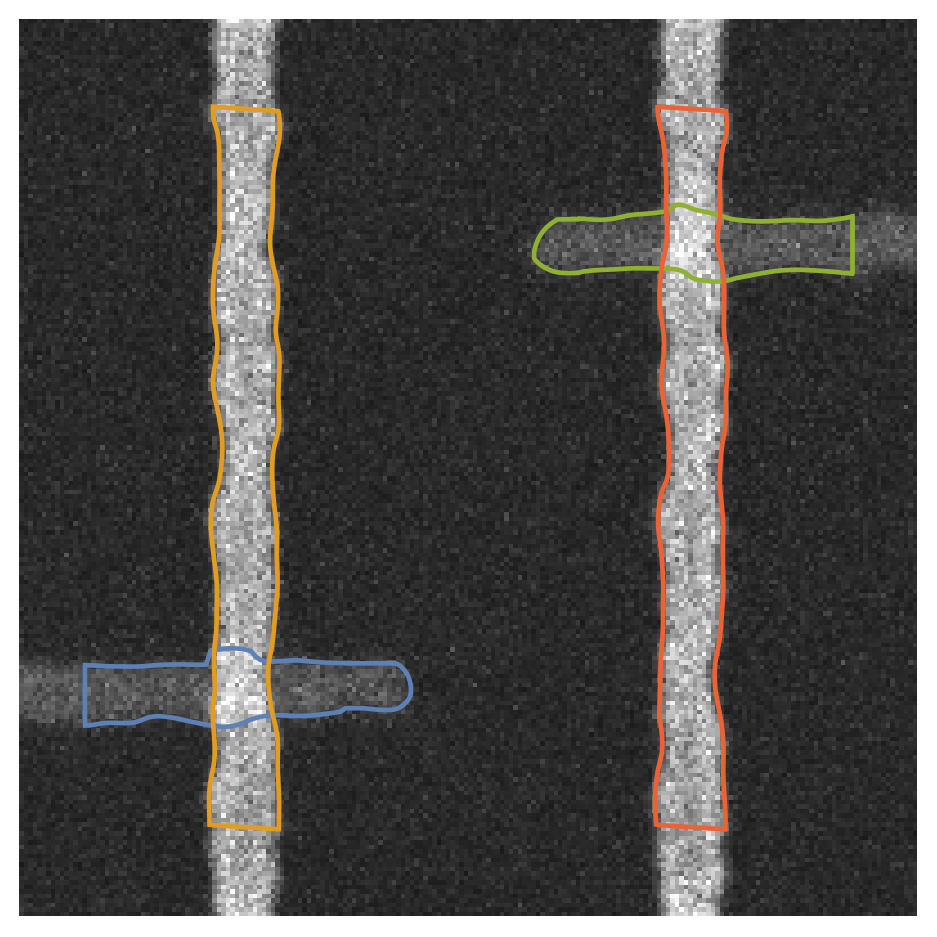}
        \caption{}
        \label{fig: subfig qualitative_results cd 12}
    \end{subfigure}  
\hfill
    \begin{subfigure}[T]{0.3\linewidth}
        \centering
        \includegraphics[trim={155 155 155 155}, clip, angle=-90, width=\linewidth]{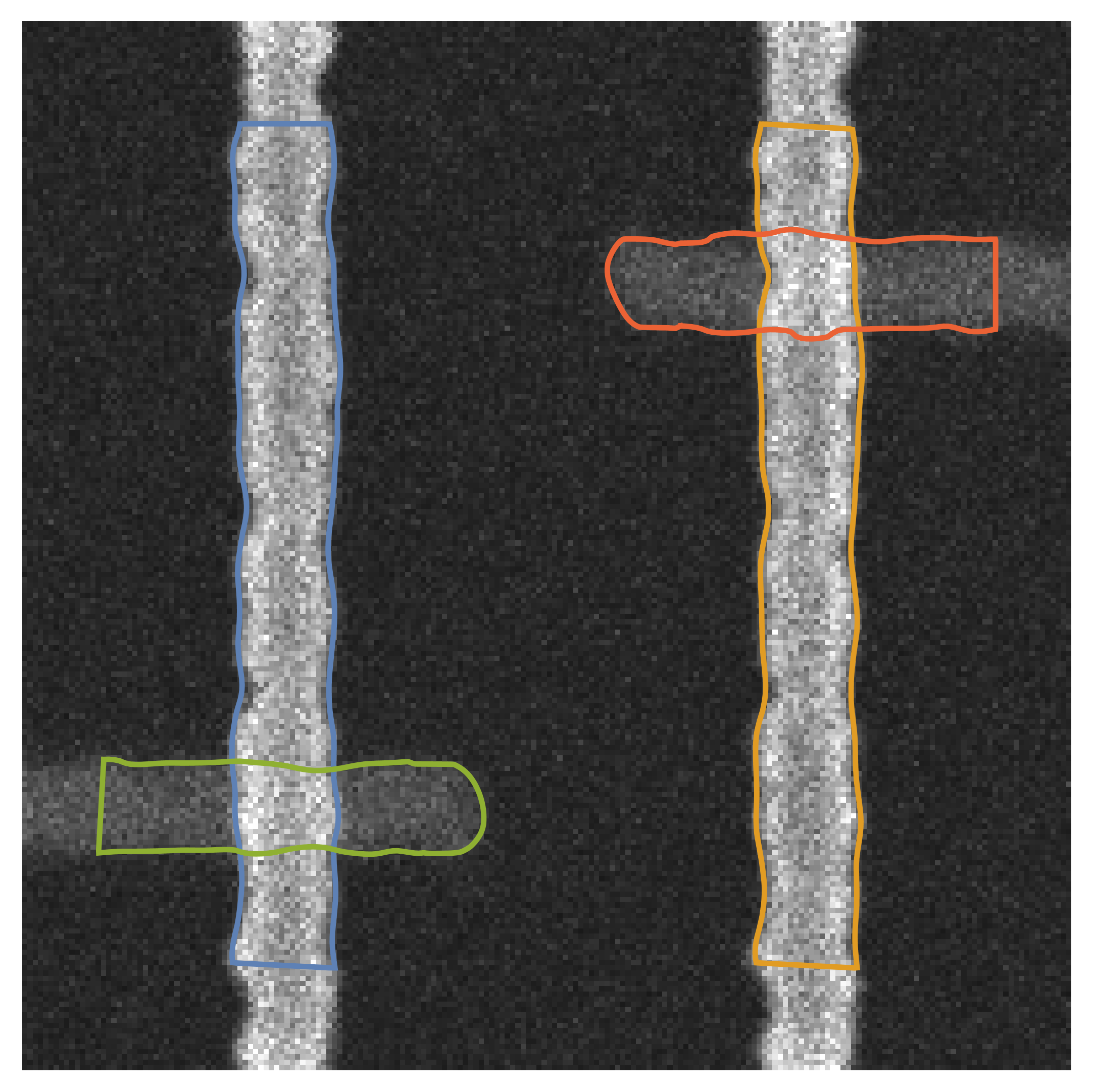}
        \caption{}
        \label{fig: subfig qualitative_results cd 16}
    \end{subfigure}
    \caption{Output of our method (Fig.~\ref{fig: flowchart}) with $d_{\mathrm{c}}$, Eq.~\eqref{dmon}, and $\mathcal{C}_{\mathrm{new}}$, Eq.~\eqref{CNEW},
    with variations in width and height. The width and height of the electronic structures increases from left to right.}
    \label{fig:qualitative_results}    
\end{figure*}

\begin{figure*}
\centering
    \begin{subfigure}[T]{0.3\linewidth}
        \centering
        \includegraphics[trim={62 62 62 62}, clip, angle=-90, width=\linewidth]{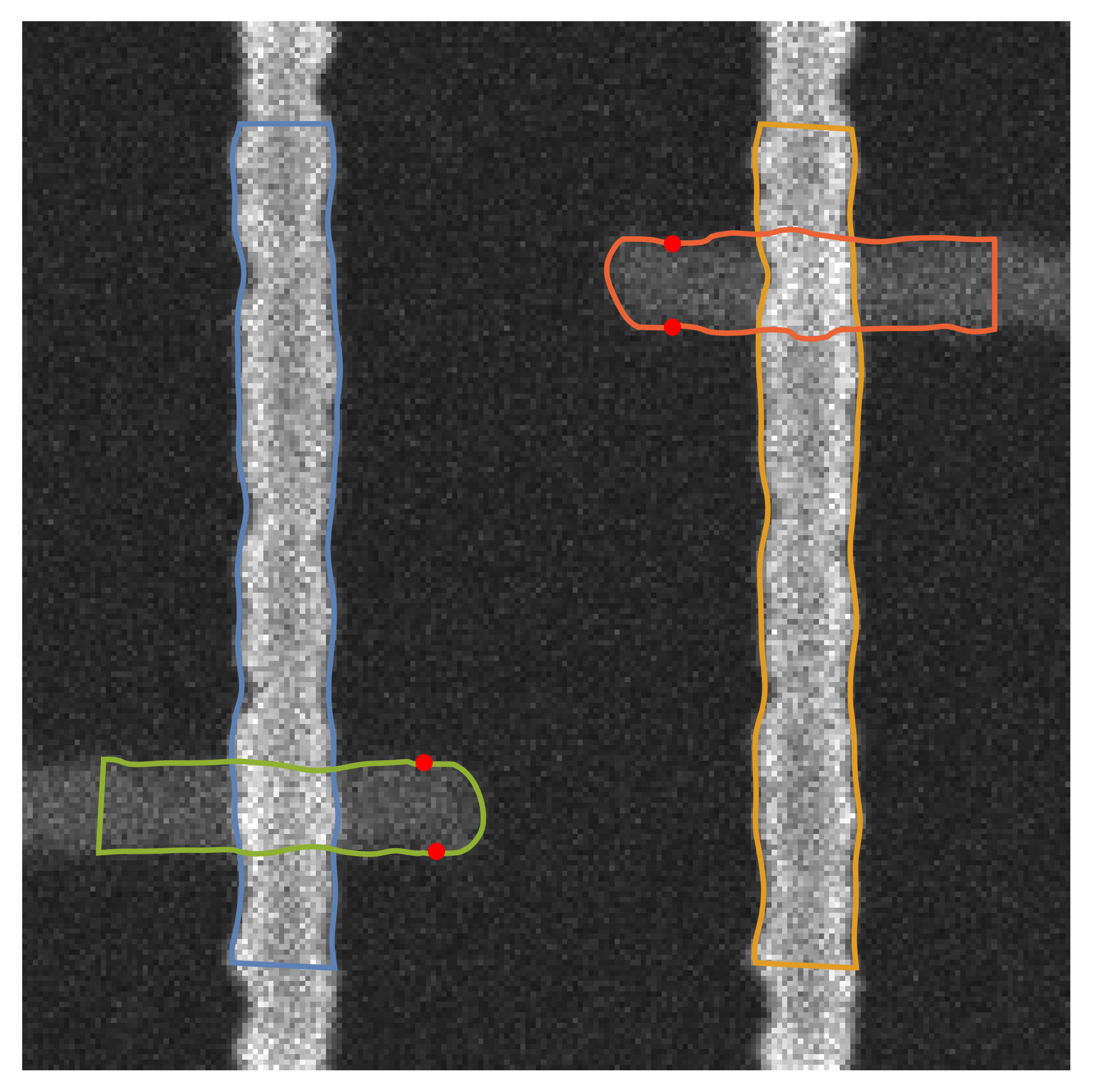}
        \caption{}
        \label{Fig: sub fig, results grouped cost}
    \end{subfigure}    
    %\hfill
    \begin{subfigure}[T]{0.3\linewidth}
        \centering
        \includegraphics[trim={62 62 62 62}, clip, angle=-90, width=\linewidth]{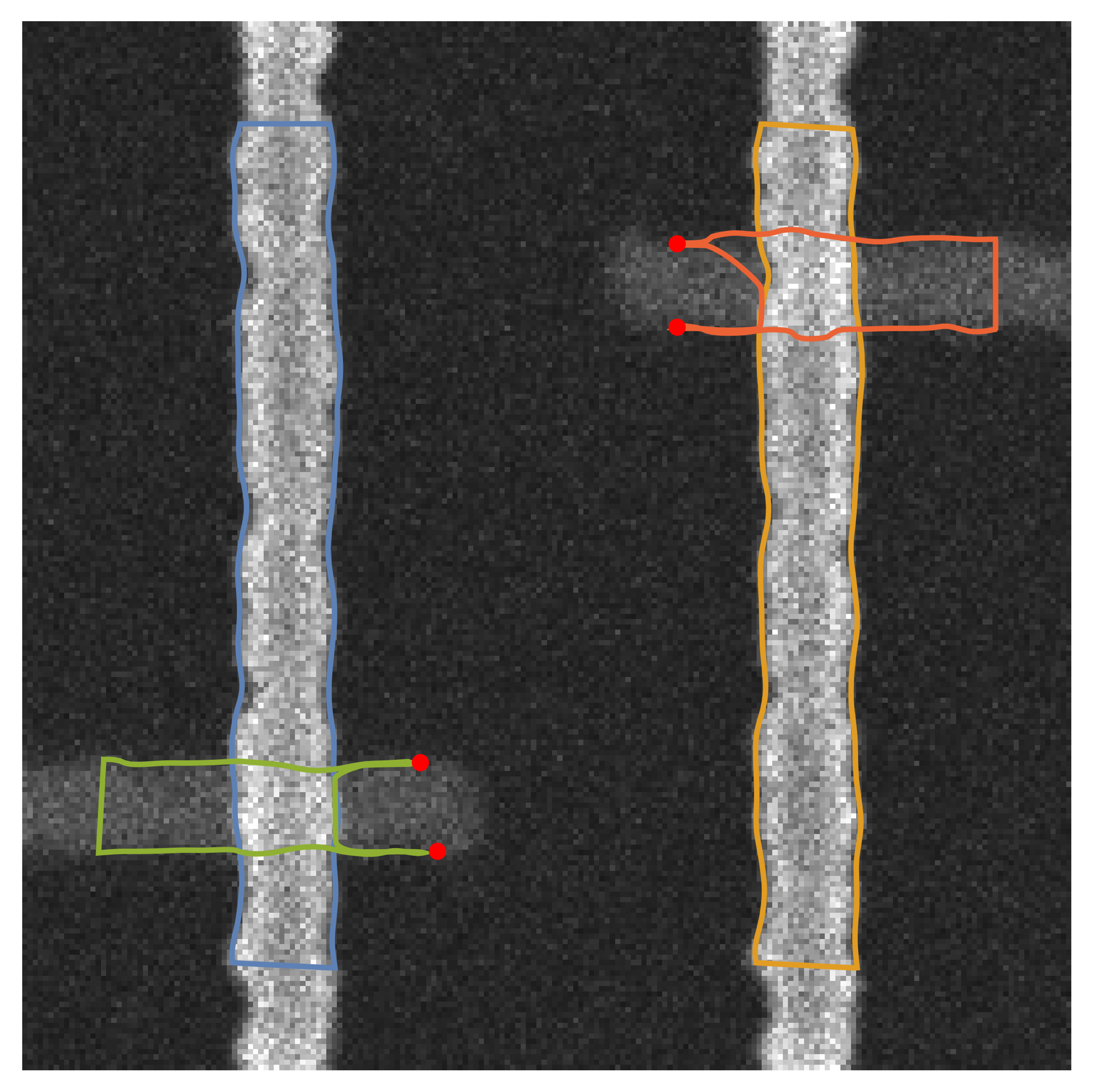}
        \caption{}
        \label{Fig: sub fig, results ungrouped cost}
    \end{subfigure}    
\caption{Comparison: (a) our method (Fig.~\ref{fig: flowchart}) with $d_{\mathrm{c}}$ and the connected-component-informed cost function $\mathcal{C}_{\mathrm{new}}$, Eq.~\eqref{CNEW}, and (b) our method with $d_{\mathrm{c}}$ and the previous cost function $\mathcal{C}$, Eq.~\eqref{Eq:CostFunction}. The sinks and sources of the geodesic tracking model are indicated with red dots. Even though the geodesic tracking model is cusp-free, cusps may still occur within the spatial snake model and during the switch between the spatial snake and geodesic tracking model.}
\label{Fig: results ungrouped cost function}
\end{figure*}

\begin{figure*}
\centering
    \begin{subfigure}[T]{0.3\linewidth}
        \centering
        \includegraphics[trim={62 62 62 62}, clip, angle=-90, width=\linewidth]{images/Experiments/CD16Height40sinktarget.png}
        \caption{}
    \end{subfigure} 
    %\hfill
    \begin{subfigure}[T]{0.3\linewidth}
        \centering
        \includegraphics[trim={62 62 62 62}, clip, angle=-90, width=\linewidth]{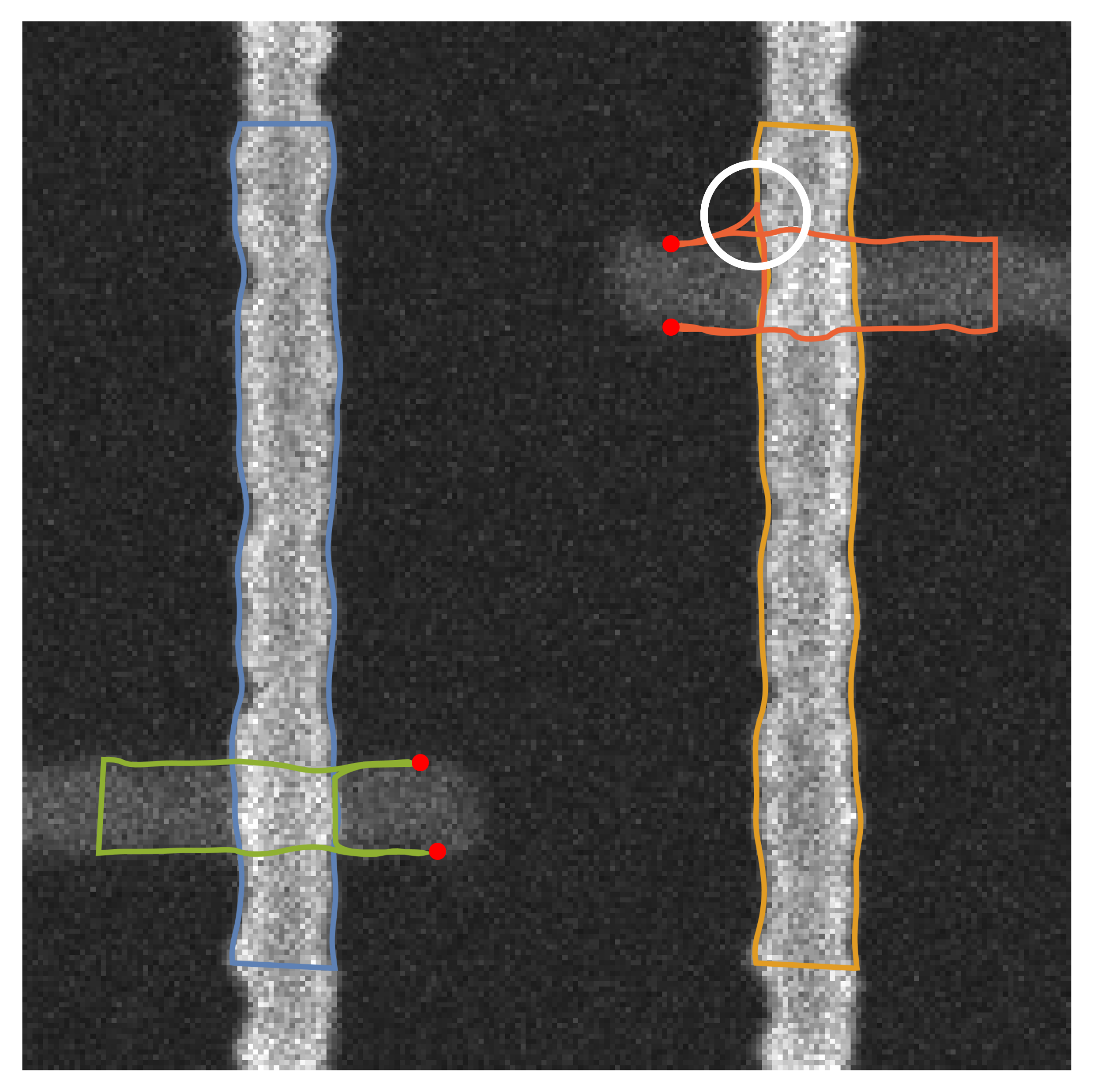}
        \caption{}
        \label{Fig: subfig results ERIK}
    \end{subfigure}
    \caption{Comparison: (a) our method (Fig.~\ref{fig: flowchart}) with $d_{\mathrm{c}}$ and $\mathcal{C}_{\mathrm{new}}$, (b) previous tracking models $d_{\mathrm{proj}}$ from \cite{BekkersGSI}. The sinks and sources of the geodesic tracking model are indicated with red dots.
    The red track in the previous model exhibits a cusp in this common practical setting.}
    \label{Fig: results ERIK}
\end{figure*}

% \begin{figure}
%         \centering
%         \includegraphics[width=\linewidth]{images/Results3SEMimages.png}
%         \caption{Result of our single method applied to three different SEM images where we only adapted cost function parameters $p\in[1,3],\sigma_\alpha \in[1 \Delta\theta,4\Delta\theta]$ where $\Delta\theta=\frac{2\pi}{48}$, and $\sigma_s = \frac{3w}{4}$ where $w$ is the width of the components.}
% \end{figure}

\subsubsection{Qualitative \& Quantitative Results}
Figure~\ref{fig:qualitative_results} shows the contouring results for three of the twelve synthetic SEM images. 
These results demonstrate that operating in $\R^2 \times P^1$ enables accurate edge detection, even when structures overlap.

The grouping of the cost function of the geodesic tracking model mitigates the risk of the geodesic jumping between edges of different components. 
In Fig.~\ref{Fig: results ungrouped cost function}, we compare the result of our method using the (grouped) cost function $\mathcal{C}_{\mathrm{new}}$ with the (ungrouped) cost function $\mathcal{C}$. 
Fig.~\ref{Fig: sub fig, results ungrouped cost} shows that when the ungrouped cost function $\mathcal{C}$ is used, geodesics tend to switch between the edges of the horizontal and vertical components. 
In contrast, when using $\mathcal{C}_{\mathrm{new}}$, the geodesic consistently follows the edge of the correct component, as visualized in Fig.~\ref{Fig: sub fig, results grouped cost}.

The primary advantage of our new geodesic model is that it is cusp-free, in contrast to the model of \cite{BekkersGSI}, which can produce cusps. 
Figure~\ref{Fig: results ERIK} illustrates the results obtained when our geodesic tracking method is replaced by the model of \cite{BekkersGSI}, while all other steps in the flowchart of Fig.~\ref{fig: flowchart} remained the same. 
As seen in the bottom-right vertical electronic structure of Fig.~\ref{Fig: subfig results ERIK}, the method of \cite{BekkersGSI} produces a geodesic that contains a cusp. 
Furthermore, since this method uses the ungrouped cost function $\mathcal{C}$, the resulting geodesics jump between the edges of different components.

Figures~\ref{fig: subfig qualitative_results cd 8}, ~\ref{fig: subfig qualitative_results cd 12} and~\ref{fig: subfig qualitative_results cd 16} show an undershoot of the curvature along the straight segments of the electronic structures. 
This effect is most noticeable for the brighter, horizontal structures. 
In these cases, the contours do not fully align with the curvature of the edges. 
This behavior is caused by the spatial snake model, which refines the initial contour only within a fixed orientation layer. 
The restriction to a fixed orientation layer reduces the model's adaptability to local curvature. 
For edges exhibiting significant roughness, the geodesic tracking model is preferable, as it can better adapt to variations in curvature.

The plot on the left of Fig.~\ref{Fig: MASD distance}, shows for each electronic structure the Mean Average Surface distance ~\eqref{Eq:MASD} between the contour obtained by our algorithm and the ground truth. For most structures, the error is less than one pixel.
If we compare our method with the previous tracking approach of \cite{BekkersGSI}, we observe in the right plot of Fig.~\ref{Fig: MASD distance} that the Mean Average Surface distance increases with the height of the top structures for those located at the bottom. 
This result can be explained, as the previous method uses the ungrouped cost function $\mathcal{C}$. 
The algorithm tends to favor edges with higher contrast, which are the edges of the electronic structures in the foreground. 
Consequently, the geodesic tracking model $d_{\mathrm{c}}$ tends to change during tracking from an edge in the background to an edge in the foreground. 
As the height increases, the intensity of the electronic structures in the background decreases, which then leads to an increase of the error.

The plot on the left of Fig.~\ref{Fig: Hausdorff distance} shows, for each electronic structure, the Hausdorff distance ~\eqref{Eq:Hausdorff} between the contour obtained by our algorithm and the corresponding ground-truth contour. 
For most electronic structures, the error is less than four pixels. 
However, there are some outliers. 
Fig.~\ref{Fig: effect noise} illustrates one of the outliers. 
As shown in Fig.~\ref{Fig: subfig input, effect noise}, the edges in the overlapping regions are faint. 
As a result, errors occurred during the edge refinement steps of the spatial snake model, see Fig.~\ref{Fig: subfig output, effect noise}.

If we compare our method with the previous tracking method~\cite{BekkersGSI}, we observe in the right plot of Fig.~\ref{Fig: Hausdorff distance} that the Hausdorff distance increases again with the height of the top structures for those located at the bottom. This behavior is similar to what we observed for the Mean Average Surface distance.

Our method is also stable under different widths of the electronic structures in the SEM images, in the sense that the Mean Average Surface and Hausdorff distance stay the same when varying the width, as can be observed in Figs.~\ref{Fig: MASD distance Width},~\ref{Fig: Hausdorff distance Width}.

\begin{figure*}
        \centering
        \includegraphics[width=\linewidth]{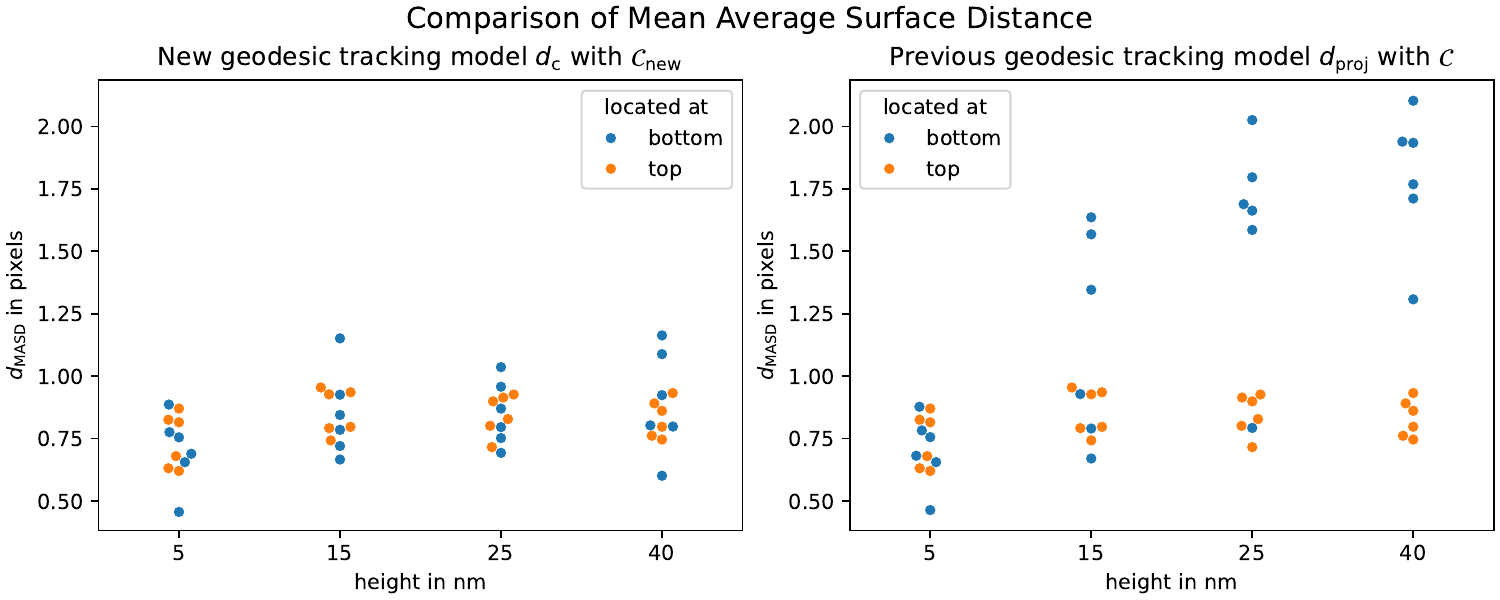}
        \caption{Comparison of the Mean Average Surface distance \eqref{Eq:MASD}: on the left our method (Fig.~\ref{fig: flowchart}) with $d_{\mathrm{c}}$ and 
        $\mathcal{C}_{\mathrm{new}}$, on the right previous tracking model $d_{\mathrm{proj}}$ with $\mathcal{C}$, cf.~\eqref{dproj} and
        \cite{BekkersGSI}. Every point corresponds to an individual electronic structure in a SEM images, sorted by the height of such a structure.
        }
        \label{Fig: MASD distance}
\end{figure*}

\begin{figure*}
        \centering
        \includegraphics[width=\linewidth]{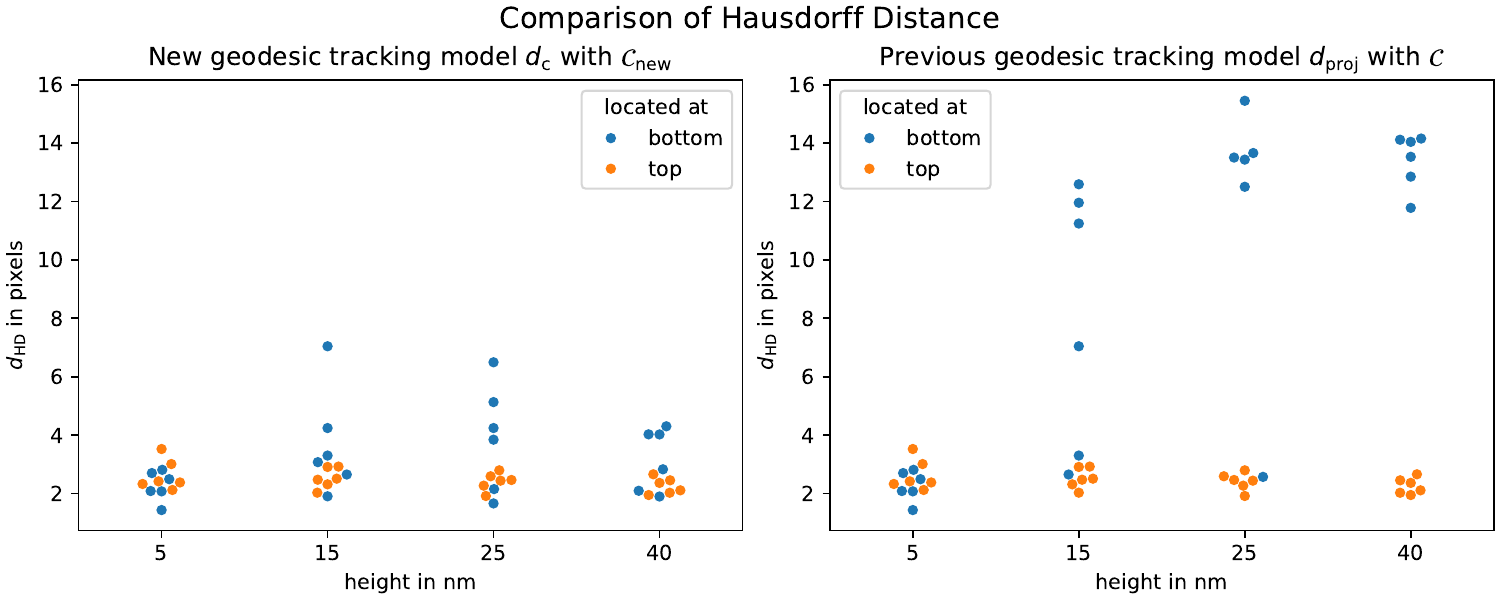}
        \caption{Comparison of the Hausdorff distance \eqref{Eq:MASD}: on the left our method (Fig.~\ref{fig: flowchart}) with $d_{\mathrm{c}}$ and $\mathcal{C}_{\mathrm{new}}$, on the right previous tracking model $d_{\mathrm{proj}}$ with $\mathcal{C}$, cf.~\eqref{dproj} and
        \cite{BekkersGSI}. Every point corresponds to an individual electronic structure in a SEM images, sorted by the height of such a structure.
%\cite{bekkers2018nilpotent}.
        }
        \label{Fig: Hausdorff distance}
\end{figure*}

% \begin{figure*}
%         \centering
%         \includegraphics[width=\linewidth]{images/Plot_Tracking_Models/datavisualcomparisonMASDWidth.png}
%         \caption{Comparison of the Mean Average Surface distance \eqref{Eq:MASD}: on the left our method (Fig.~\ref{fig: flowchart}) with $d_{\mathrm{c}}$ and 
%         %$\{\mathcal{C}_{\tilde{K}_i}\}_i^n$
%         $\mathcal{C}_{\mathrm{new}}$, on the right previous tracking models $d_{\mathrm{proj}}$, cf.~\eqref{dproj} and 
%         \cite{BekkersGSI}. Every point corresponds to an individual component of a geometry, sorted by the width of the components.
%         }
%         \label{Fig: MASD distance Width}
% \end{figure*}

% \begin{figure*}
%         \centering
%         \includegraphics[width=\linewidth]{images/Plot_Tracking_Models/datavisualcomparisonHWidth.png}
%         \caption{Comparison of the Hausdorff distance \eqref{Eq:MASD}: on the left our method (Fig.~\ref{fig: flowchart}) with $d_{\mathrm{c}}$ and $\mathcal{C}_{\mathrm{new}}$, on the right previous tracking models $d_{\mathrm{proj}}$, cf.~\eqref{dproj} and
%         \cite{BekkersGSI}. Every point corresponds to an individual component of a geometry, sorted by the width of the components.
% %\cite{bekkers2018nilpotent}.
%         }
%         \label{Fig: Hausdorff distance Width}
% \end{figure*}

\begin{figure}
\centering
    \begin{subfigure}[T]{0.45\linewidth}
        \centering
        \includegraphics[trim={155 155 155 155}, clip, width=\linewidth]{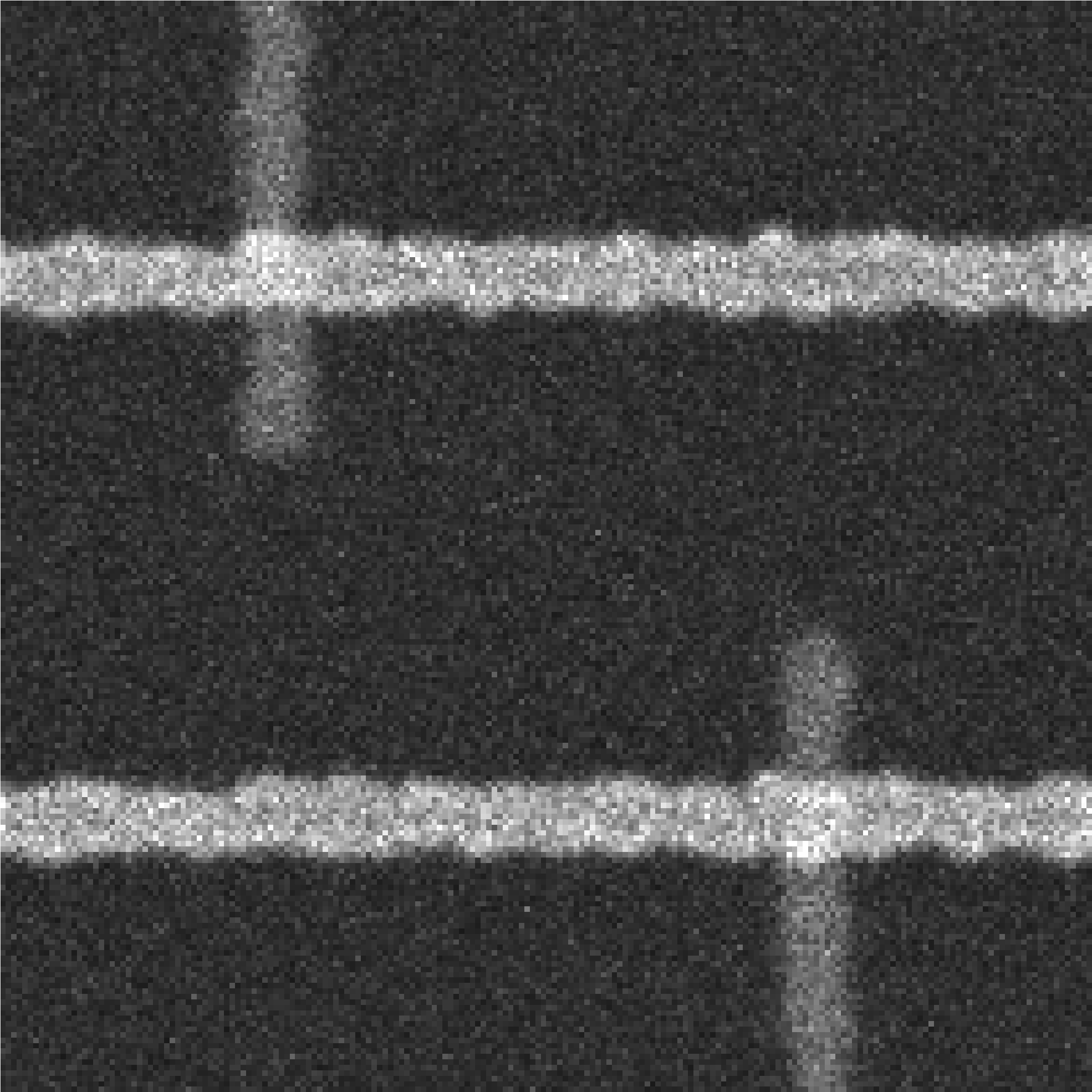}
        \caption{}
        \label{Fig: subfig input, effect noise}
    \end{subfigure} 
    \hfill
    \begin{subfigure}[T]{0.45\linewidth}
        \centering
        \includegraphics[trim={155 155 155 155}, clip, angle=-90, width=\linewidth]{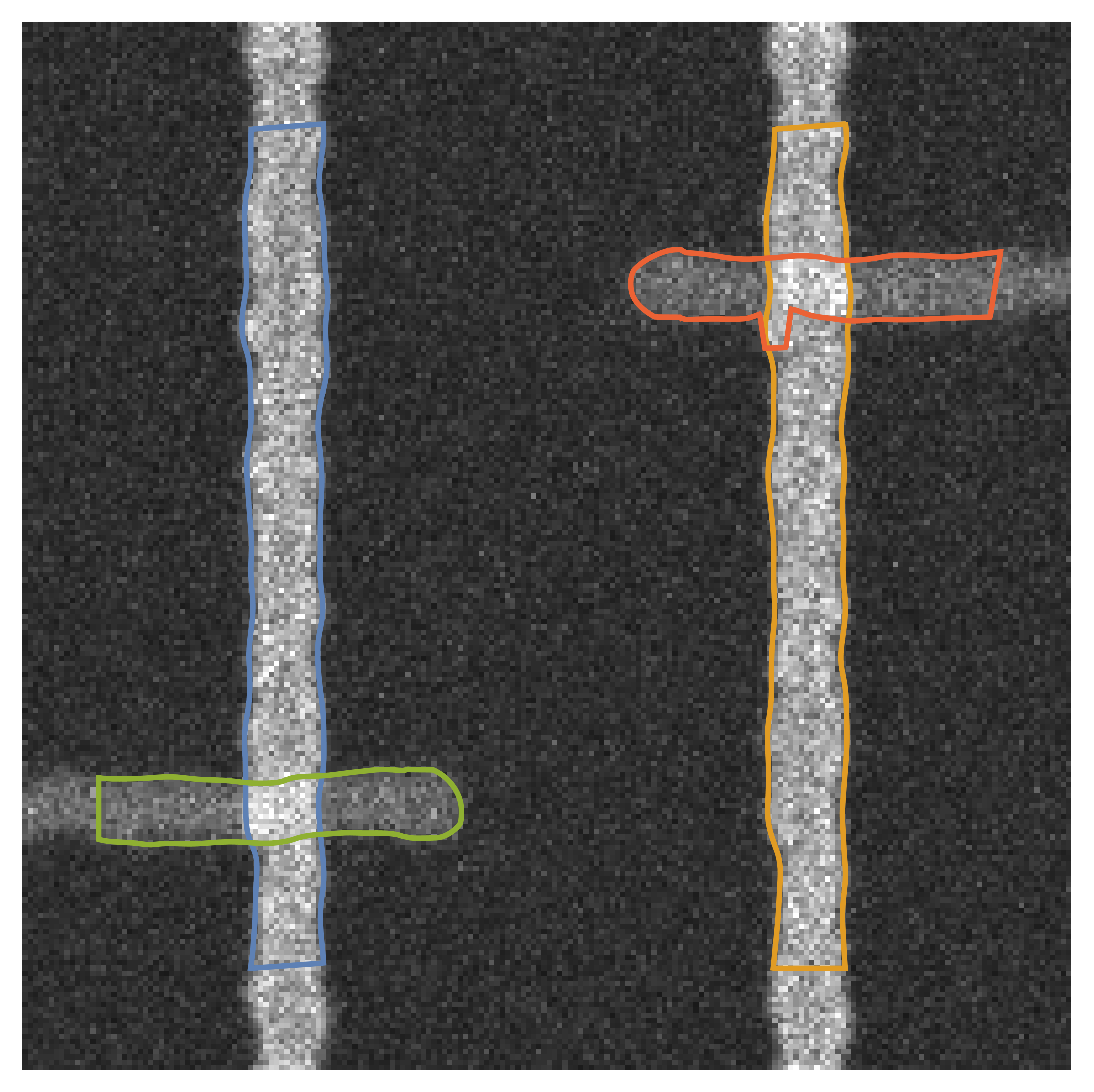}
        \caption{}
        \label{Fig: subfig output, effect noise}
    \end{subfigure}
    \caption{Example of an outlier in the Hausdorff distance for our method. Note in (a) that the edges of the overlapping structures are poorly visible. (b) shows that an error occurred during the edge refinement step.}
    \label{Fig: effect noise}
\end{figure}

\section{Conclusion}\label{section: conclusion}
In this article, we introduced a new metric %pseudo-distance 
$d_{\mathrm{c}}$ underlying a geodesic tracking model on the projective line bundle $\R^2 \times P^1$. The main advantage of this new distance $d_{\mathrm{c}}$ is that it is \emph{cusp-free}, in contrast to the previously introduced distance $d_{\mathrm{proj}}$ \cite{BekkersGSI} on $\R^2 \times P^1$. 

We demonstrated in Proposition~\ref{Corollary: sign-semidefinite distance} that the pseudo-distance $d_{\mathrm{c}}$ is well-posed and symmetric. In Theorem~\ref{th:main} we analyzed where $d_{\mathrm{c}}$ and $d_{\mathrm{proj}}$ coincide and where they differ.
The new model 
$d_{\mathrm{c}}$ satisfies the triangle inequality on a large subset $\tilde{\mathcal{Q}}_\xi$~\eqref{Eq: Tilde Qxi} that we characterized.

In Proposition~\ref{corr:tobewritten}, we further showed that $d_{\mathrm{c}}$ is globally controllable but not locally controllable. Consequently, the distance map $d_{\mathrm{c}}$ has a discontinuity at the origin. This is a desirable property as visualized in Fig.~\ref{fig:lat}: it avoids double cusps when moving in the lateral spatial direction. This difference is also reflected in a different Maxwell set \emph{for small radii} (compare Fig.~\ref{fig:Maxwellmon} to Fig.~\ref{fig:Maxwellproj}).

In Section~\ref{section: experiment}, we proposed a segmentation algorithm for detecting contours of electronic structures in SEM images. This algorithm incorporates the new geodesic tracking model and is summarized in the flowchart, as shown in Fig.~\ref{fig: flowchart}. In addition to the new tracking model, the algorithm includes two new relevant practical solutions:  
\begin{enumerate} 
\item
we introduced a switching criterion to minimize computations in the higher-dimensional space;  
\item
we demonstrated the advantage of integrating the connected component algorithm from \cite{berg2024connectedcomponentsliegroups} into our geodesic tracking model. By splitting the cost function $\mathcal{C}$ into $n$ cost functions $\{\mathcal{C}_{\tilde{K}_i}\}_i^n$  one for each component, cf. $\mathcal{C}_{\mathrm{new}}$ \eqref{CNEW}, we reduce the risk of jumping to the wrong connected component during tracking.
\end{enumerate}

We evaluated our algorithm quantitatively and qualitatively on both experimental and synthetic SEM images:
\begin{itemize}
\item
Qualitative results (Fig.~\ref{fig:introexpgen} and Fig.~\ref{fig:qualitative_results}) demonstrate that the algorithm successfully detects edges of overlapping structures on experimental and synthetic SEM images. 
\item
Quantitative results from Experiment~2 show that the Mean Average Surface Distance is less than one pixel (i.e. 1 nanometer) for most electronic structures (Fig.~\ref{Fig: MASD distance}), while the Hausdorff distance remains below four pixels (Fig.~\ref{Fig: Hausdorff distance}). 
\end{itemize}
%However, a few outliers were observed, primarily in cases where the edges of overlapping structures were faint. This led to inaccuracies during the edge refinement process.
\subsubsection*{Recommendations for Future Work}
The geodesic tracking model can handle curvature well, whereas the spatial snake model tends to undershoot the curvature. %However, 
The geodesic tracking model requires us to perform computations in $\R^2 \times P^1$, while for the spatial snake model the computations are done in $\R^2$, which is computationally less demanding. This reveals a trade-off between speed up and accuracy.

Regarding accuracy, we do observe a few outliers (in Fig.~\ref{Fig: Hausdorff distance}). They typically occur in cases where edges of overlapping structures are faint (Fig.~\ref{Fig: effect noise}). There fast spatial snake refinement is more risky. We propose to include edge-strength in the switching criterion. %(Subsect). 
%So, depending on the accuracy goal and or the roughness of your edges one could decide when to use which model.

%If we compare the performance of %both models, the geodesic tracking %model demonstrates that it can %handle curvature, whereas the %spatial snake model tends to %undershoot the curvature of the %edges. However, this improvement %comes at a computational cost: the %geodesic model operates in the %higher-dimensional space %$\R^2 \times P^1$, while the %spatial snake model performs its %computations in $\R^2$, %making it less computationally %demanding. Therefore, the choice %between the two approaches depends %on the desired balance between %computational efficiency and %accuracy. In particular, this %decision should consider the %roughness of the edges, as higher %roughness reduces the accuracy of %the spatial snake model.

We could eliminate the final parameters $(p,\lambda,\sigma_s, \sigma_\alpha, \xi=\sqrt{g_{11} / g_{33}})$, as discussed in Section~\ref{section: experiment setup} (also proposed in \cite{BekkersGSI, BekkersSIAM, berg2024connectedcomponentsliegroups}), by integrating 
an equivariant deep learning method (like \cite{smets2023pde}) to compute the cost function in Step~4a of Fig.~\ref{fig: flowchart}.

\subsection*{Acknowledgements}

The research in this article is primarily funded by the \emph{DELTAS} project (project nr. 24PPS053, ref nr. TKI-HTSM/240170,  2024-2029) with Eindhoven University of Technology TU/e and industrial partner ASML, within the PPS Innovation scheme (strategic program 23SP014 - Semiconductor Manufacturing Equipment), where we gratefully acknowledge the High Holland Tech organization for financial support.

The research is  partly supported by the project, \emph{Geometric Learning for Image Analysis}, R. Duits,  VICI Exact Sciences (nr. VI.C.202-031, 2021-2027), where we gratefully acknowledge 
The Dutch Foundation of Research (NWO) for financial support. 

We gratefully acknowledge Finn Sherry for the code
of the geodesic tracking models $d_{\cF_0}$ \eqref{Eq: finsler distance} and $d_{\cF_0^+}$ \eqref{Eq: finsler distance forward} \cite{berg2025crossing}.
We thank Tim Houben for the generation of the data set of synthetic SEM images. 
We thank Gijs Bellaard for his valuable suggestions and feedback on earlier versions of this manuscript.

%and would eliminate the final parameters in Table \ref{Table:CC} and Table \ref{Table:C_K}.

\begin{appendices}

\section{Model \texorpdfstring{\unboldmath $d_{\mathrm{c}}$}{dc} 
%on  \texorpdfstring{\unboldmath $\R^2 \times P^1$}{R2 x %P1}}
}
\label{app:MCP}

In this section, we provide the proofs of the key results (Lemma~\ref{Lemma: antipodal symmetry}, Propositions~\ref{Corollary: sign-semidefinite distance} \& \ref{corr:tobewritten}) underlying the interpretation of the sign-semidefinite distance $d_{\mathrm{c}}$ on the projective line bundle $\R^2 \times P^1$.

Note that the curves in the formulation of $d_{\mathrm{c}}$ in Props.~\ref{Corollary: sign-semidefinite distance} \& \ref{corr:tobewritten} are within $\R^2 \times S^1$, and not within $\R^2 \times P^1$ as their sign-semidefinite control is ill-posed on $P^1=S^1/_{\sim}$. 
Nevertheless, the resulting map $d_{\mathrm{c}}$ is well-posed on $\R^2 \times P^1$. %and by Theorem~\ref{th:main} it is a distance when constrained to $\mathcal{Q} \times \mathcal{Q}$.

\subsection{Proof of Lemma\texorpdfstring{~\ref{Lemma: antipodal symmetry}}{: antipodal symmetry}}\label{app:proofantisymmetry}
\begin{proof}
To prove Lemma~\ref{Lemma: antipodal symmetry}, we need to show that
\begin{align} \label{keyid}
    d_{\cF}(\bp_0,\bp_1) &= d_{\cF}(\overline{\bp}_1,\overline{\bp}_0),
\end{align}
for all $\ul{p}_0,\ul{p}_1 \in \R^2 \times S^1$, 
 where the Finsler function $\cF \in \{\cF_0,\cF_0^+\}$ is defined in \eqref{Eq: finsler function} and \eqref{F0plus}.
Next we show this by proving that there is a length-preserving bijection between the sets $\Gamma_{\mathcal{F}}(\overline{\bp}_1,\overline{\bp}_0)$ and $\Gamma_{\mathcal{F}}(\bp_0,\bp_1)$. 

Let $\gamma \in \Gamma_{\mathcal{F}}(\bp_0,\bp_1)$, where $\Gamma_{\cF}=\Gamma_0$ if $\cF=\cF_0$ and $\Gamma_{\cF}=\Gamma_0^+$ if $\cF=\cF_0^{+}$, as given in Def.~\ref{def:Gamma}. 

From curve $\gamma$ we create the curve $\hat{\gamma} \in \Gamma_{\mathcal{F}}(\overline{\bp}_1,\overline{\bp}_0)$ given by $\hat{\gamma}(t):=\overline{\gamma}(1-t)$.
We have $\dot{\hat{\gamma}}(t) = - \dot{\overline\gamma}(1-t)$.
The length of $\hat{\gamma}$ is given by:
\begin{equation}\label{eq:lengthhatgamma}
\begin{array}{rcl}
    L(\hat\gamma) &=&
      \int \limits_0^1\cF(\hat\gamma(t),\dot{\hat\gamma}(t))  \,{\rm d}t\\
    &=& \int \limits_0^1\cF(\overline{\gamma}(1-t),-\dot{\overline{\gamma}}(1-t)) \, {\rm d}t\\
    &=& \int \limits_0^1\cF(\overline{\gamma}(\tau),-\dot{\overline{\gamma}}(\tau)) \, {\rm d}\tau,
\end{array}
\end{equation}
where at the third equality we performed the substitution $\tau = 1 - t \in [0,1]$. 

To show that the length of $\gamma$ is the the same as the length of $\hat\gamma$, we need to show that
\begin{align}\label{Eq: prop finsler 1} 
     \cF\left(\overline{\gamma}(\tau), -\dot{\overline{\gamma}}(\tau)\right) = \cF\left(\gamma(\tau), \dot{\gamma}(\tau)\right).   
\end{align}
Recall from Def.~\ref{def:controlu1} that $u^1$ and $u^3$ are the controls of $\gamma$. 
For the curve $\tau \mapsto \overline{\gamma}(\tau) = (\bx(\tau),-\bn(\tau))$, the controls become $\overline{u}^1$ and $\overline{u}^3$ are given by:
\begin{equation} \label{newcontrols}
\begin{array}{ll} 
    \overline{u}^1(\tau)&:= \dot{\bx}(\tau) \cdot (-\bn(\tau)) = -u^1(\tau), \\
    \overline{u}^3(\tau)&:= \dot{\theta}(\tau) = u^3(\tau).
\end{array}
\end{equation}
By construction of $\cF$, we have that
\begin{equation*} 
\begin{array}{ll}
    \cF\left(\overline{\gamma}(\tau), -\dot{\overline{\gamma}}(\tau)\right)     
    &= \mathcal{C}(\overline{\gamma}(\tau))\sqrt{\xi^2|u^1(\tau)|^2 + |-u^{3}(\tau)|^2}\\
    &= \mathcal{C}(\gamma(\tau))\sqrt{\xi^2|u^1(\tau)|^2 + |u^{3}(\tau)|^2}\\
    &= \cF\left(\gamma(\tau), \dot{\gamma}(\tau)\right).
\end{array}
\end{equation*}
The second equality holds since the cost function is by assumption well-defined on $\R^2 \times P^1$, i.e. $\mathcal{C}(\bp)=\mathcal{C}(\overline{\bp})$ for all $\bp \in \R^2 \times S^1$.

If we substitute \eqref{Eq: prop finsler 1} in Eq. \eqref{eq:lengthhatgamma}, we have that
\begin{equation*}
    L(\hat\gamma) = L(\gamma).
\end{equation*}

Vice versa, we can apply the same trick to go from %curves 
$\hat\gamma \in \Gamma_{\mathcal{F}}(\overline{\bp}_1,\overline{\bp}_0)$ to %curves 
$\gamma \in \Gamma_{\mathcal{F}}(\bp_0,\bp_1)$, which implies that there is a length-preserving bijection between the sets $\Gamma_{\mathcal{F}}(\overline{\bp}_1,\overline{\bp}_0)$ and $\Gamma_{\mathcal{F}}(\bp_0,\bp_1)$ and (\ref{keyid}) follows.
\end{proof}

\subsection{Proof of 
Proposition\texorpdfstring{~\ref{Corollary: sign-semidefinite distance}}{: sign-semidefinite distance}}
\begin{proof} 
To prove Prop.~\ref{Corollary: sign-semidefinite distance}, we must establish two results. First, we need to show that $d_{\mathrm{c}}$ as defined in Eq.~\eqref{dmon} 
can be expressed as 
\begin{equation} \label{TBS}
     d_{\mathrm{c}}([\bp_0],[\bp_1])=\!\! \inf \limits_{\gamma \in \Gamma^c([\bp_0],[\bp_1])} \int \limits_0^1\cF_0(\gamma(t),\dot{\gamma}(t)) {\rm d}t.
\end{equation}
Second, we need to show that $d_{\mathrm{c}}$ is symmetric.

We begin by addressing the first point. Consider the set $\Gamma_0^-(\bp_0,\bp_1)$, which is given by Def.~\ref{def:Gamma} but now with the constraint $u^1\leq 0$. 
The Finsler function $\cF_0^-$ is defined by \eqref{F0plus} but now with the constraint $u^1\leq 0$. 
The set $\Gamma^c([\bp_0],[\bp_1])$ of cusp-free piecewise $C^1$-curves for which the sign of their control $u^1$ (Def.~\ref{def:controlu1}) is either nonnegative or nonpositive sign everywhere, is equal to:
\begin{equation}\label{eq:Gamma_c}
    \Gamma^c([\bp_0],[\bp_1]) = \Gamma^+_0([\bp_0],[\bp_1]) \cup \Gamma^-_0([\bp_0],[\bp_1]).
\end{equation}
We can rewrite the right-hand side of \eqref{TBS} as
\begin{align}\label{eq:statementprop1}
\begin{split}
&\inf \limits_{\gamma \in \Gamma^c([\bp_0],[\bp_1])} \int \limits_0^1\cF_0(\gamma(t),\dot{\gamma}(t))  \,{\rm d}t\\
&\begin{array}{rcll}    
    &\overset{\eqref{eq:Gamma_c}}{=}& \min \big\{ \\
    &&\inf \limits_{\gamma \in \Gamma^+_0([\bp_0],[\bp_1])} \int \limits_0^1\cF_0(\gamma(t),\dot{\gamma}(t))  \,{\rm d}t,\\
    &&\inf \limits_{\gamma \in \Gamma^-_0([\bp_0],[\bp_1])} \int \limits_0^1\cF_0(\gamma(t),\dot{\gamma}(t))  \,{\rm d}t \big \} \vspace{6pt}\\[6pt] 
    &=&\min \big\{
    d_{\cF_0^+}(\bp_0,\bp_1),\; d_{\cF_0^-}(\bp_0,\bp_1), \\ 
    && \qquad \ d_{\cF_0^+}(\bp_0,\overline{\bp}_1),\;
    d_{\cF_0^-}(\bp_0,\overline{\bp}_1),\\
    && \qquad \  d_{\cF_0^+}(\overline{\bp}_0,\bp_1), \; d_{\cF_0^-}(\overline{\bp}_0,\bp_1), \\  
    && \qquad \  d_{\cF_0^+}(\overline{\bp}_0,\overline{\bp}_1), \;d_{\cF_0^-}(\overline{\bp}_0,\overline{\bp}_1)\big\}.
\end{array}
\end{split}
\end{align}
Next we will show that $d_{\cF_0^+}(\bp_0,\bp_1)=d_{\cF_0^-}(\overline{\bp}_0,\overline{\bp}_1)$.
We show this by establishing a length-preserving bijection between $\Gamma_0^+(\bp_0,\bp_1)$ and $\Gamma_0^-(\overline{\bp}_0,\overline{\bp}_1)$. 
The bijection is defined by the mapping $\gamma(t) \mapsto \overline{\gamma}(t)$, with $\gamma \in \Gamma_0^+(\bp_0,\bp_1)$ and $\overline{\gamma} \in \Gamma_0^-(\overline{\bp}_1,\overline{\bp}_1)$. 
We need to show that
\begin{align*}
     \cF_0^-\left(\overline{\gamma}(t), \dot{\overline{\gamma}}(t)\right) = \cF_0^+\left(\gamma(t), \dot{\gamma}(t)\right), 
\end{align*}
in order to prove that the bijection is length-preserving.

Recall from Def.~\ref{def:controlu1} that $u^1$ and $u^3$ are the controls of $\gamma\in \Gamma_0^+(\bp_0,\bp_1)$. 
For the curve $\overline{\gamma}(\cdot) = (\bx(\cdot),-\bn(\cdot)) \in \Gamma_0^-(\overline{\bp}_0,\overline{\bp}_1)$, its controls $\overline{u}^1$ and $\overline{u}^3$ are given by (\ref{newcontrols}).
By construction of $\cF_0^-$, we have that
\begin{equation*} 
\begin{array}{ll}
    \cF_0^-\left(\overline{\gamma}(t), \dot{\overline{\gamma}}(t)\right) 
    &= \mathcal{C}(\overline{\gamma}(t))\sqrt{\xi^2|-u^1(t)|^2 + |u^{3}(t)|^2}\\
     &= \mathcal{C}(\gamma(t))\sqrt{\xi^2|u^1(t)|^2 + |u^{3}(t)|^2}\\
     &= \cF_0^+\left(\gamma(t), \dot{\gamma}(t)\right).
\end{array}
\end{equation*}
The second equality holds since $\mathcal{C}(\bp)=\mathcal{C}(\overline{\bp})$ for all $\bp \in \R^2 \times S^1$. This implies that 
\begin{equation}\label{Eq: prop finsler 2} 
   d_{\cF_0^-}(\overline{\bp}_0,\overline{\bp}_1)=  d_{\cF_0^+}(\bp_0,\bp_1) ,
\end{equation} 
for all $\ul{p}_0,\ul{p}_1 \in \R^2 \times S^1$. 
Hence, combining Eq.~(\ref{eq:statementprop1}) and Eq.~(\ref{Eq: prop finsler 2}) and (\ref{keyid})  (i.e. Lemma~\ref{Lemma: antipodal symmetry}) we find 
\begin{equation} \label{aap}
\begin{aligned}
& \inf\limits_{\gamma \in \Gamma^c([\bp_0],[\bp_1])} \int \limits_0^1\cF_0(\gamma(t),\dot{\gamma}(t))  \,{\rm d}t
\\
&\quad=
    \min \Big\{
    d_{\cF^{+}_0}(\bp_0,\bp_1), d_{\cF^{+}_0}(\overline{\bp}_0,\bp_1), \\
&\hspace{5em} d_{\cF^{+}_0}(\bp_0,\overline{\bp}_1), d_{\cF^{+}_0}(\overline{\bp}_0, \overline{\bp}_1) 
\Big\} 
\\
&\quad=d_{\mathrm{c}}([\bp_0],[\bp_1]).
\end{aligned}
\end{equation}
Now we have shown the first part of Prop.~\ref{Corollary: sign-semidefinite distance}. 

The second part, i.e., $d_{\mathrm{c}}([\ul{p}_0],[\ul{p}_1])=d_{\mathrm{c}}([\ul{p}_1],[\ul{p}_0])$ now follows directly from (\ref{aap}) and %Lemma~\ref{Lemma: antipodal symmetry} (i.e. (
(\ref{keyid}).
\end{proof}

% Let $\gamma$ be the minimizing geodesic of the model $\cF_0^+$. We have for the curve $\hat{\gamma}(t)=\overline{\gamma}(t)$ (while setting $\overline{\gamma}(t):=\overline{\gamma(t)}$) that:
% \begin{align*}
%     d_{\cF_0^+}(\overline{\bp}_0,\overline{\bp}_1) &= 
%     \inf \limits_{ 
%      \hat\gamma \in \Gamma_0(\overline{\bp}_0,\overline{\bp}_1)
%     }  \int \limits_0^1\cF_0^+(\hat\gamma(t),\dot{\hat{\gamma}}(t))  {\rm d}t \\
%     &= 
%     \inf \limits_{ \gamma \in \Gamma_0(\bp_0,\bp_1)
%     }  \int \limits_0^1\cF_0^+(\overline{\gamma}(t),\dot{\overline{\gamma}}(t)) {\rm d}t \\
%     &= 
%     \inf \limits_{  
%     \gamma \in \Gamma_0(\bp_0,\bp_1)
%     }  \int \limits_0^1\cF_0^-(\gamma(t),\dot{\gamma}(t))  {\rm d}t \\    
%     &= d_{\cF_0^-}(\bp_0,\bp_1). 
% \end{align*} 

\subsection{Proof of  Proposition~\ref{corr:tobewritten}}\label{App:PropGlobalLocalControllability}
\begin{proof}
Both the sub-Riemannian distance $d_{\cF_0}$ and the plus-control variant $d_{\cF_0^{+}}$ are globally controllable. This follows by the Chow-Rashevskii theorem and has been proven before in \cite[Thm. 1]{duitsmeestersmirebeauportegies}. 
The global controllability of $d_{\mathrm{c}}$ and $d_{\mathrm{proj}}$ amounts to finite distance functions $d_{\mathrm{c}}$, $d_{\mathrm{proj}}$, which then directly follows from \eqref{dmon} and \eqref{dproj} respectively.

Regarding the violation of local controllability for model $d_{\mathrm{c}}$, we recall that we assumed $\mathcal{C}\geq \delta$ in the definition \eqref{Eq: finsler distance forward} of $d_{\cF_0^+}$, so by \eqref{dmon} it remains to show that
\[
\limsup_{[\bp]\to[\bp_0]}d_{\mathrm{c}}([\bp],[\bp_0]) \geq  \pi.
\]
for the case $\mathcal{C}=1$. Now consider the special case where we approach $\bp_0$ with $\bp=\bp_1^{\varepsilon}$ from the side with the same orientation.
I.e., the setting \eqref{p1e}, with $0<\xi \varepsilon< \pi$. In the model $d_{\cF_0}$ the minimizing geodesic connecting $\bp_0$ and $\bp^{\varepsilon}_1=((0,\varepsilon),(1,0))$
has two cusps (in the `inflexional trajectory') as can be seen in Fig.~\ref{fig:lat} and understood from the phase portrait analysis (mathematical pendulum \cite{BoscainESAIM}), \cite[Fig.4]{Moiseev}. There
$d_{\cF_0}(\bp_1^{\varepsilon},\bp_0) \to 0$ and
$d_{\cF_0}(\overline{\bp}_1^{\varepsilon},\bp_0) \to \pi$ when $\varepsilon \downarrow 0$ by continuity of $d_{\cF_0}$.

Now in the geodesics of $d_{\mathrm{c}}$ such cusps are not present. 
By \eqref{dmon} we
either connect the pairs $\{\bp_0,\bp_1^{\varepsilon}\}, 
\{\overline{\bp}_0,\overline{\bp}_1^{\varepsilon}\}$ or the pairs
$\{\overline{\bp}_0,\bp_1^{\varepsilon}\}, 
\{\bp_0,\overline{\bp}_1^{\varepsilon}\}$ with the two minimizing geodesic in $d_{\cF_0^+}$.
The second 
case
 produces full (cusp-free) U-curve sub-Riemannian \cite[Fig.3]{Moiseev} that collapse to a $\pi$ switch in orientation as $\varepsilon \downarrow 0$ yielding distance $\pi$.
The first case cannot be connected with a cusp-free geodesic and the connecting minimizing geodesic in $d_{\cF_0^+}$ will have equal in-place rotations at the boundaries \cite[Thm.2, Fig.8-A1]{duitsmeestersmirebeauportegies} also collapsing to distance $\pi$ if $\varepsilon\downarrow 0$. Thereby \eqref{eq:bb} follows.
\end{proof}

\section{Distance Map Computation \label{app:B}}

\subsubsection*{Eikonal PDE for $\cF = \cF_0^+$}
For a Finsler function $\cF:T(\R^2 \times S^1) \to \R^+$ and $\bp_0 \in \R^2 \times S^1$, the eikonal PDE is given by
\begin{equation}\label{eq:eikonal_pde_appendix}
\begin{cases}           
\cF^*(\bp, {\rm d} W(\bp)) = 1, \text{for } \bp \neq \bp_0, \\
W(\bp_0) = 0,
\end{cases}
\end{equation}
where the dual Finsler function $\cF^*: T^{*}(\R^2 \times S^1)\to \R$ is defined by 
\begin{equation}\label{eq:dual_finsler_function}
\cF^*(\bp,\hat{\bp}):=
\sup_{\dot{\bp} \neq 0}
\frac{\langle \hat{\bp},\dot{\bp}\rangle}{\cF(\bp,\dot{\bp})},
\end{equation}
where we use the notation $\langle \hat{\bp},\dot{\bp}\rangle:=\hat{\bp}(\dot{\bp})$.

The viscosity solution $W: \R^2 \times S^1 \to \R$ of \eqref{eq:eikonal_pde} equals the induced Finslerian $d_\cF(\bp_0, \cdot)$ \cite{BekkersSIAM}. 

\begin{remark}[Eikonal PDE \eqref{eikF0plus}: the case $\cF=\cF_0^+$]\label{rem:plus_finsler} 
    We associate a Finsler function $\cF_0^+$ with the asymmetric metric $d_{\cF_0^+}$ \eqref{Eq: finsler distance forward}, namely
    $\cF_0^{+}$ given by \eqref{F0plus}.
    By direct computation \cite{duitsmeestersmirebeauportegies}, we can then find for $\cF = \cF_0^+$ the following Hamiltonian: 
    \begin{equation}\label{H0plus}
        \vert (\cF_0^+)^*(\bp, \hat{\bp}) \vert^2 = \mathcal{C}^{-2}(\bp)(\xi^{-2} (p_1)_+^2 + p_3^2)=1
    \end{equation}
    with $\hat{\bp} = p_1 \omega^1\vert_\bp + p_3 \omega^3\vert_\bp$ and $(a)_+ = \max\{a,0\}$.
    This explains the eikonal system \eqref{eikF0plus}, as that now directly follows by
    subsitution of
    \[
        \hat{\bp}={\rm d}W(\bp) = \sum_{i=1}^{3}\mathcal{A}_{i} W(\bp)\left.\omega^{i}\right|_{\bp}.
    \]
    into \eqref{H0plus}.
\end{remark}

% \begin{remark}[Hamiltonian view of eikonal PDE]
% We can see $\vert \cF^*(\bp, \cdot)\vert^2: T_\bp^*(\R^2 \times S^1) \to \R$ as a Hamiltonian.
% \end{remark}

\subsubsection*{Converging Sub-Riemannian Algorithm}

There are many ways to solve the eikonal PDE \eqref{eq:eikonal_pde}. 
One popular, efficient, and theoretically well-underpinned method is anisotropic fast marching. J.-M.~Mirebeau developed an advanced, efficient anisotropic fast marching methods \cite{mirebeau_anisotropic_2014} for sub-Riemannian metric tensor fields on $\R^2 \times S^1$, which relies on an anisotropic Riemannian relaxation \cite[Thm.~2]{duitsmeestersmirebeauportegies}. 
However, this anistropic fast marching method cannot deal with truly sub-Riemannian metric tensor fields (as it has %reasonable 
limitations on the Riemannian anisotropy in its stencils). 
This is one of the reasons (despite the numeric results in \cite{SanguinettiFM}) why we choose to use an iterative method, which exactly hardcodes the (sub-Riemannian) constraint. 
%to horizontal subspaces (recall Def.~\ref{def:Hor}).

The algorithm does not rely on an anisotropic Riemannian relaxation \cite[Thm.~2]{duitsmeestersmirebeauportegies}, as in advanced anisotropic fast-marching methods \cite{mirebeau_anisotropic_2014, mirebeau2019hamiltonian}, but it \emph{exactly hardcodes the (sub-Riemannian) constraint to horizontal subspaces (recall Remark~\ref{rem:sub-riemannian manifold}).} 
%(recall Def.~\ref{def:Hor}).

Anisotropic fast-marching is beneficial in terms of algorithmic complexity, but as argued in \cite{berg2025crossing}, the iterative method is simple, transparent, and flexible. Moreover, \emph{it is easily parallellizable}, and  can easily take advantage of GPU hardware, avoiding the %dramatic 
loss of computation time reported in \cite{SanguinettiFM}.

For each $t \in [t_n, t_{n+1}]$ with $t = n \varepsilon$ at step $n \in \mathbb{N} \cup \{0\}$ and for $\varepsilon > 0$, the iterative method is given by:
\begin{equation*}   
    \begin{cases}
        \frac{\partial W^\varepsilon_{n+1}}{\partial t}(\bp_1,t) = 1 -  \cF^*\left(\bp, {\rm d}W^{\varepsilon}_{n+1}(\bp_1,t)\right) , \\
        W^\varepsilon_{n+1}(\bp_1,t_n) = W^\varepsilon_{n}(\bp_1,t_n), \; \text{for} \; \bp_1 \neq \bp_0,\\
        W^\varepsilon_{n+1}(\bp_0,t_n) =0
    \end{cases}
\end{equation*}
for $n=\{1,2,\dots\}$ and for $n=0$ initialize
\begin{align*}    
    \begin{cases}
        \frac{\partial W^\varepsilon_1}{\partial t}(\bp_1,t) = 1- \cF^*\left(\bp, {\rm d}W^{\varepsilon}_1(\bp_1,t)\right) ,\\
        W^\varepsilon_1(\bp_1,0) = \delta_{\bp_0}^{\R^2 \times S^1}(\bp_1)
    \end{cases}    
\end{align*}
with   
$    
\delta_{\bp_0}^{\R^2 \times S^1}(\bp_1) =
    \begin{cases}
        0 &\text{if}\;\bp_0=\bp_1 \\
        +\infty&\text{else}.
    \end{cases}
$. \\ 
Finally one obtains (akin to \cite[Eq.4.4]{BekkersSIAM}) the Finslerian distance map $W: \R^2\times S^1 \to \R^{+}$ via
\[
d_{\cF_0^+}(\bp,\bp_0)=
W(\bp):= \lim \limits_{\varepsilon \downarrow 0} \lim_{n \to \infty} W^{\varepsilon}_{n}(\bp,n \varepsilon).
\]
In practice one sets $0<\varepsilon \ll 1$ small, uses standard finite differences, and if a fixed point is obtained up to a tolerance $10^{-5}$ on the relative error, one stops.

\section{Morphological Dilation \label{App:C}}

In this appendix, the concept of morphological dilation is explained, on which the grouping Algorithm~\ref{Algorithm: grouping of C} of $\mathcal{C}$ into $\mathcal{C}_{\mathrm{new}}$ is based. 
$\mathcal{C}_{\mathrm{new}}$ is used in the geodesic tracking model $d_{\mathrm{c}}$, as shown in Step 4a of Fig.~\ref{fig: flowchart}. 

\begin{definition}[Morphological convolution]
    Let $f_1,f_2: \R \times S^1 \rightarrow \R$ be two lower semi-continuous functions, and let the group be $\SE(2)$. Then their morphological convolution is given by:
    \begin{equation*}
        (f_1 \square f_2)(\bp) = \inf_{g \in \SE(2)} \{ f_1(g^{-1}\cdot\bp)+f_2(g\cdot\ul{p_0)}\},
    \end{equation*}
    using the group action \eqref{groupaction}.
\end{definition}
The morphological dilation $D(\bp,\tau)$ of an image $U \in C(\R^2 \times S^1)$ with a kernel $k_\tau$ is given by:
\begin{equation} \label{eq:dil}
    D(\bp,\tau) = -(k_\tau \square -U)(\bp).
\end{equation}
The kernel $k_\tau \geq 0$ that we use is given by:
\begin{align}\label{Eq: kernel dilation}
    k_\tau(\bp) &=   \begin{cases}
        0  &\text{if }d_{\mathcal{G}}(\bp_0, \bp) \leq \tau\\
        \infty &\text{else}
    \end{cases}
\end{align}
where $d_{\mathcal{G}}$ denotes the Riemannian distance given by \eqref{Eq: Riemannian distance}. Recall from \eqref{Eq: sub-Riemannian metric tensor} that the metric parameters of the left-invariant metric $\mathcal{G}$ are notated by $g_{11}, g_{22}$ and $g_{33}$. 
% Intuitively, these parameters represent the cost of tangential, lateral, and angular movement, respectively. Hence, they determine the shape of the ball of radius $\tau$, i.e. $d_{\mathcal{G}}(\bp_0,\cdot)\leq \tau$. 
%and therefore the result of the grouping algorithm. 

As computing $d_{\mathcal{G}}$ is demanding, we will use a distance approximation $\rho$ as proposed in \cite[Eq.~29]{bellaard2023analysis}. We can fix the radius of the ball to 1, as it can easily be derived from \cite[Eq.~30]{bellaard2023analysis} that multiplying the radius with $\tau$ boils down to rescaling metric parameters of $\{g_{ii}\}_{i=1}^3$ with a factor $\frac{1}{\tau}$.

\begin{comment}
the viscosity solution of the Hamilton-Jacobi-Bellmann (HJB) equations. For $\alpha>1$ the HJB-equations are given by:
\begin{equation*}
    \begin{cases}
        \frac{\partial W}{\partial t} (\bp,t) = \frac{1}{\alpha} H^\alpha (\mathrm{d}W(\bp,t))\\
        W(\bp,0) = U 
    \end{cases}
\end{equation*}
with the Hamiltonian:
\begin{equation*}
    H^\alpha(\mathrm{d}W(\bp))= \frac{1}{\alpha}|\cF^*(\bp, \mathrm{d}W(\bp))|^\alpha.
\end{equation*}
 \citet{bellaard2023analysis} show that the viscosity solution of the Hamilton-Jacobi-Bellmann equation is given by:
\begin{equation*}
    W(\bp,t) = -(k_t^\alpha \square -U)(\bp)
\end{equation*}
where the morphological kernel $k^\alpha_t: \R^2 \times S^1 \rightarrow \R_{\geq 0}$ is defined by for $\bp \in \R^2 \times S^1$:
\begin{equation*}
    k_t^\alpha(\bp)= \frac{t}{\beta}\left(\frac{d_{\mathcal{G}}(\bp_0, \bp)}{t}\right)^\beta
\end{equation*}
with $\frac{1}{\alpha}+\frac{1}{\beta}=1$, with reference element $\bp_0=\left((0,0), (1,0)\right)$, and with Riemannian distance $d_{\mathcal{G}}(\bp_0,\cdot)$ as given in Eq. \ref{Eq: Riemannian distance}. 
\end{comment}

\section{Initial Contour in \texorpdfstring{\unboldmath $\R^2 \times P^1$}{R2 x P1}}\label{app:initial_contour}

Recall the overall scheme in Fig.~\ref{fig: flowchart} and consider Step 3 in which the initial contours are visualized. 
These initial contours are the first (automatically obtained) estimate of the placement of the edges (of the electronic structure in the SEM image), and will be refined in subsequent steps. 

In the scheme of Fig.~\ref{fig: flowchart}, initial contours are derived from the output of the connected components $\{K_i\}_{i = 1}^n \subset \R^2 \times P^1$ using \cite{berg2024connectedcomponentsliegroups}, visualized in Step 2. Next we explain this derivation.

First, we obtain an initial contour $\tilde\gamma_{K_i}: [0,1] \rightarrow \R^2$ by performing a morphological edge detection in $\R^2$ (dilation minus erosion) \cite{MorphologicalEdgeDetection}[Eq. 14] on the spatial projection of the connected components $\{K_i\}_{i = 1}^n$. 
Then, the initial contours $\tilde{\gamma}_{K_i}$ are lifted towards contours $\gamma_{K_i}$ in $\R^2\times P^1$ as follows.

Let $\indbb{\tilde{\gamma}_{K_i}}$ be the indicator function of the initial contour $\tilde\gamma_{K_i}:[0,1] \rightarrow \R^2$ of the component $K_i$, given by $\indbb{\tilde\gamma_{K_i}}
%(\bx)
: \R^2 \rightarrow [0,1]$ with 
\begin{equation*}
    \indbb{\tilde\gamma_{K_i}}(\bx) = \begin{cases}
        1& \text{if } \exists  t \in[0,1], \text{ s.t. } \tilde\gamma(t)=\bx \\
        0& \text{otherwise}.
    \end{cases}
\end{equation*}
Then, the `trivial' lift is given by 
\begin{equation*}
    (W \indbb{\tilde\gamma_{K_i}})(\bx,\theta) := \indbb{\tilde\gamma_{K_i}} (\bx)
\end{equation*}
for all $\theta \in [0,\pi)$. 
The indicator function of the connected component $K_i$ is given by 
\begin{equation*}
    \indbb{K_i}(\bx,\theta)= \begin{cases}
        1& \text{if } (\bx,\theta)\in K_i\\
        0& \text{otherwise}
    \end{cases}
\end{equation*}
The contour $\gamma_{K_i}:[0,1]\rightarrow\R^2 \times P^1$ is obtained from the support of the direct product $(W \indbb{\tilde\gamma_{K_i}})\cdot (\indbb{K_i})$. 

\begin{remark}[Deviation of horizontality of $\gamma_{K_i}$]
   The initial contours $\gamma_{K_i}:[0,1]\rightarrow\R^2 \times P^1$ may not be horizontal, Def.~\ref{def:Gamma}. 
    The parts where the contour deviates from being horizontal require a different edge-refinement method than the horizontal parts, indicated by red and blue in Step 3 in Fig.~\ref{fig: flowchart}.
\end{remark}

In practice, all functions mentioned above are discretized by sampling $\bx \in \R^2$ and $\theta \in [0,\pi)$ on an equidistant grid. We apply a thinning algorithm  \cite{LEE1994462} to ensure that the width of the (discrete) initial contour in $\R^2 \times P^1$ is equal to one voxel.

\section{Electronic Structure Width}

\begin{figure*}
        \centering
        \includegraphics[width=\linewidth]{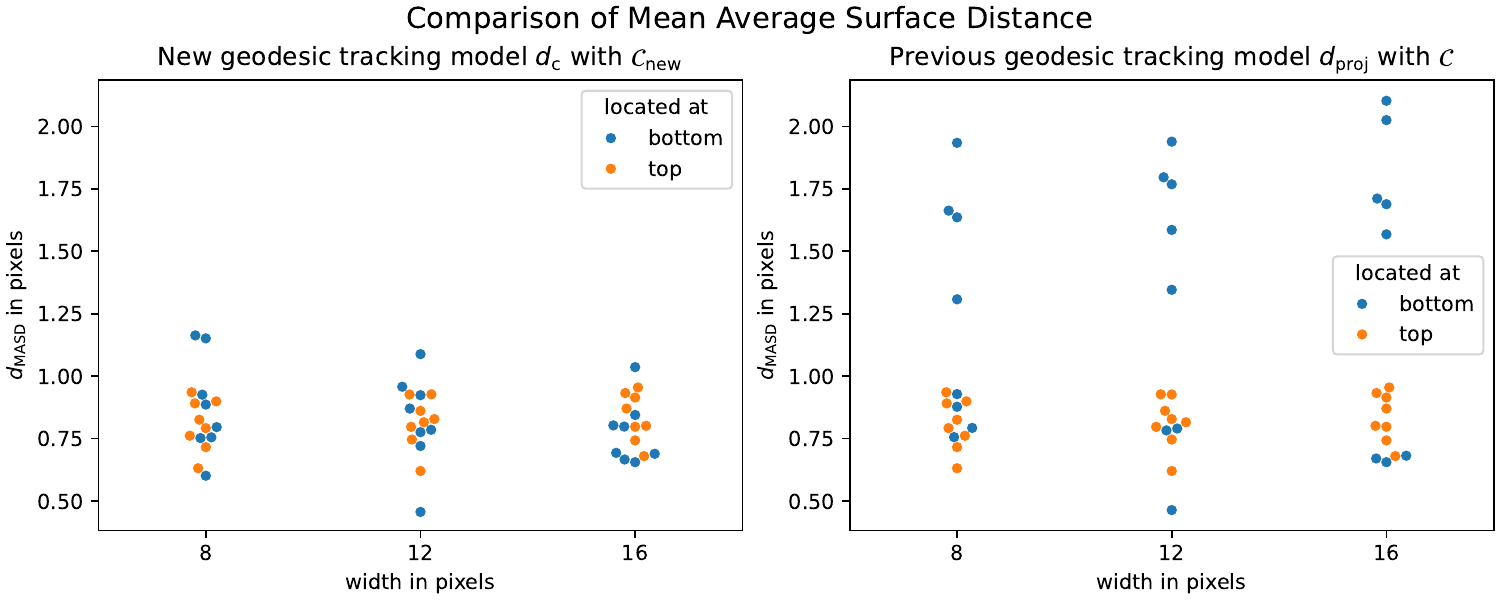}
        \caption{Comparison of the Mean Average Surface distance \eqref{Eq:MASD}: on the left our method (Fig.~\ref{fig: flowchart}) with $d_{\mathrm{c}}$ and 
        %$\{\mathcal{C}_{\tilde{K}_i}\}_i^n$
        $\mathcal{C}_{\mathrm{new}}$, on the right previous tracking model $d_{\mathrm{proj}}$ with $\mathcal{C}$, cf.~\eqref{dproj} and
        \cite{BekkersGSI}. Every point corresponds to an individual electronic structure in a SEM images, sorted by the width of such a structure.
        }
        \label{Fig: MASD distance Width}
\end{figure*}

\begin{figure*}
        \centering
        \includegraphics[width=\linewidth]{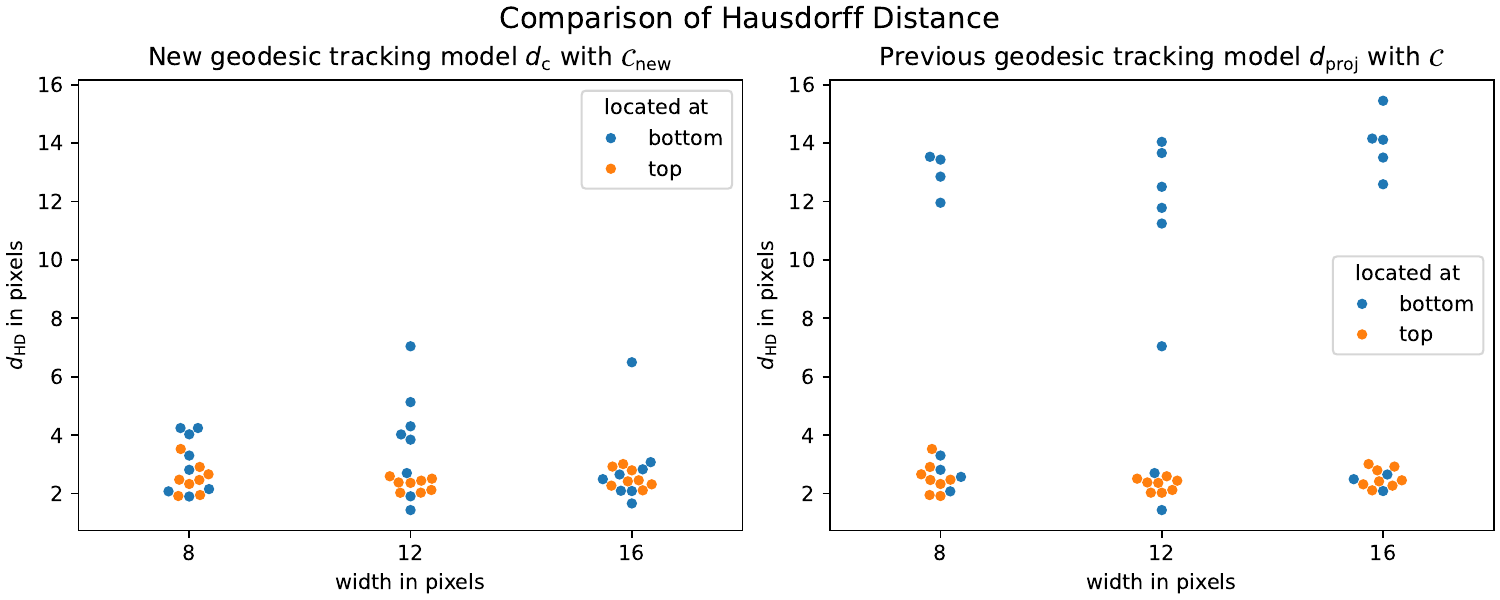}
        \caption{Comparison of the Hausdorff distance \eqref{Eq:MASD}: on the left our method (Fig.~\ref{fig: flowchart}) with $d_{\mathrm{c}}$ and $\mathcal{C}_{\mathrm{new}}$, on the right previous tracking model $d_{\mathrm{proj}}$ with $\mathcal{C}$, cf.~\eqref{dproj} and
        \cite{BekkersGSI}. Every point corresponds to an individual electronic structure in a SEM images, sorted by the width of such a structure.
%\cite{bekkers2018nilpotent}.
        }
        \label{Fig: Hausdorff distance Width}
\end{figure*}
The segmentation results of the overall framework depicted in Fig.~\ref{fig: flowchart} relying on both geodesic tracking models $d_{\mathrm{c}}$ and $d_{\mathrm{proj}}$, are stable when varying the width of the electronic structures in the SEM images, see Figs.~\ref{Fig: MASD distance Width},~\ref{Fig: Hausdorff distance Width}, where we also observe that $d_{\mathrm{c}}$ performs better than $d_{\mathrm{proj}}$.

\end{appendices}

%%===========================================================================================%%
%% If you are submitting to one of the Nature Portfolio journals, using the eJP submission   %%
%% system, please include the references within the manuscript file itself. You may do this  %%
%% by copying the reference list from your .bbl file, paste it into the main manuscript .tex %%
%% file, and delete the associated \verb+\bibliography+ commands.                            %%
%%===========================================================================================%%
{\small
\bibliography{sn-bibliography}
}% common bib file
%% if required, the content of .bbl file can be included here once bbl is generated
%%\input sn-article.bbl
\end{document}